\newtheorem{thm}{Theorem}
\newtheorem{lem}{Lemma}[section] 
\newtheorem{prop}[lem]{Proposition}
\theoremstyle{definition} 
\newtheorem{defn}{Definition} 
\newtheorem{ex}{Example} 
\newtheorem*{exs}{Example} 
\newtheorem{rem}{Remark}
\numberwithin{equation}{section}
 \newcommand{\D}{\displaystyle} 
 \newcommand{\C}{\mathbb C} 
 \newcommand{\N}{\mathbb N} 
 \newcommand{\R}{\mathbb R} 
 \newcommand{\T}{\mathbb T}
 \newcommand{\Di}{\mathbb D}
 \newcommand{\Db}{\overline{\Di}}
 \newcommand{\fr}{\frac} 
 \newcommand{\V}[1]{\left\Vert #1 \right\Vert} 
 \newcommand{\q}{\quad} 
 \newcommand{\qq}{\qquad}
 \newcommand{\al}{\alpha} 
 \newcommand{\ov}{\overline} 
 \newcommand{\la}{\lambda} 
 \renewcommand{\d}{\delta}
 \newcommand{\set}[1]{\{ #1 \}} 
 \renewcommand{\Re}{\operatorname{Re}}
 \newcommand{\Ao}{B}
 \newcommand{\M}{\cha}
 \newcommand{\Vinf}[1]{\Vert #1 \Vert_\infty}
 \newcommand{\Vs}[1]{\Vert #1 \Vert_B} 
 \newcommand{\VB}[1]{\Vert #1 \Vert_B}
 \newcommand{\tf}{\t{a}}
 \newcommand{\tg}{\t{b}}
 \newcommand{\e}{\varepsilon_1} 
 \newcommand{\et}{\varepsilon_2} 
 \newcommand{\phio}{\phi_1}
 \newcommand{\unit}{\mathbf{1}}
 \newcommand{\ext}[1]{\operatorname{ext}(#1)}
 \newcommand{\chb}{\operatorname{Ch}(\TB)}
 \renewcommand{\t}[1]{\widetilde{#1}}
 \newcommand{\del}{\Delta}
  \newcommand{\deli}{\Delta^{-1}}
 \newcommand{\be}{\beta}
 \newcommand{\phit}{\phi_2}
 \newcommand{\SB}{S_{\TB}}
 \newcommand{\SAo}{S_{\Ao}}
 \renewcommand{\dh}{d_H}
 \newcommand{\unitI}{e}
 \newcommand{\MT}{\M\times\T}
 \newcommand{\sm}{\setminus}
 \newcommand{\delo}{\Delta_1}
 \newcommand{\INT}{\mathcal{I}}
 \newcommand{\der}{\mathcal{D}}
 \newcommand{\con}{\mathcal{C}}
 \newcommand{\ta}{\t{a}}
 \newcommand{\tb}{\t{b}}
 \newcommand{\te}{\t{e}}
 \newcommand{\TB}{\t{B}}
 \newcommand{\cha}{\operatorname{Ch}(A)}
 \DeclareMathOperator{\Lip}{Lip}
 \newcommand{\Lipo}{\Lip([0,1])}
 \newcommand{\MIS}{\mathcal{M}}
 \newcommand{\SA}{S_A}
\begin{document}


\title[Tingley's problem for Banach spaces]
{Tingley's problem for complex Banach spaces
which do not satisfy the Hausdorff distance condition}


\author[{\small D. Cabezas}]{David Cabezas}
\address[D. Cabezas]{
Departamento de An{\'a}lisis Matem{\'a}tico, Facultad de
Ciencias, Universidad de Granada, 18071 Granada, Spain.}
\email{dcabezas@ugr.es}

\author[M. Cueto-Avellaneda]{Mar{\'i}a Cueto-Avellaneda}
\address[M. Cueto-Avellaneda]{
School of Mathematics, Statistics and Actuarial Science,
University of Kent, Canterbury, Kent CT2 7NX, UK}
\email{emecueto@gmail.com}

\author[Y. Enami]{Yuta Enami}
\address[Y. Enami]{Graduate School of Science and Technology,
Niigata University, Niigata 950-2181, Japan}
\email{enami@m.sc.niigata-u.ac.jp}

\author[T. Miura]{Takeshi Miura}
\address[T. Miura]{Department of Mathematics,
Faculty of Science, Niigata University, Niigata 950-2181, Japan}
\email{miura@math.sc.niigata-u.ac.jp}

\author[A.M. Peralta]{Antonio M. Peralta}
\address[A.M. Peralta]
{Instituto de Matem{\'a}ticas de la Universidad de Granada (IMAG),
Departamento de An{\'a}lisis Matem{\'a}tico, Facultad de
Ciencias, Universidad de Granada, 18071 Granada, Spain.}
\email{aperalta@ugr.es}

\subjclass{Primary 46B04; Secondary 46B20, 46J10} 
\keywords{isometry, maximal convex set, Tingley's problem} 
\thanks{
First author supported by grant FPU21/00617 at University of Granada
founded by Ministerio de Universidades (Spain).
First, second, and fifth authors partially supported by grant
PID2021-122126NB-C31 funded by MCIN/AEI/10.13039/501100011033. Third author partially supported by JSPS KAKENHI
Grant Number JP 21J21512. The fourth author was partially supported
by JSPS KAKENHI Grant Number JP 20K03650.
The fifth author also supported
by IMAG--Mar{\'i}a de Maeztu grant
CEX2020-001105-M/AEI/10.13039/ 501100011033.}


\begin{abstract} 
In 2022, Hatori gave a sufficient condition for complex Banach spaces to have the complex Mazur--Ulam property. In this paper, we introduce a class of complex Banach spaces $B$ that do not
satisfy the condition but enjoy the property that every surjective isometry on the
unit sphere of
such $B$ admits an extension to a surjective real linear isometry on the whole space $B$. 
Typical examples of Banach spaces studied in this note are the spaces $\Lipo$ of all
Lipschitz complex-valued functions on $[0,1]$  with norm $\VB{a}=|a(c)|+L(a)$, where $c$ is a fixed element in $[0,1]$, and the space $C^1([0,1])$
of all continuously differentiable complex-valued functions on $[0,1]$ equipped with the 
norm $|f(0)|+\Vinf{f'}$.
\end{abstract}

\maketitle


\section{Introduction and the main results}

Let $S_E, S_F$ denote the unit spheres of two real or complex Banach spaces $E$ and $F$.
In 1987, Tingley \cite{tin} asked whether every surjective isometry
$\del\colon S_E\to S_F$ admits an extension to a surjective real linear isometry from $E$ onto $F$.
This sharp question, nowadays known as {\it Tingley's problem}, has been deeply investigated
for several concrete classes of Banach spaces since then. The problem has become an attractive and active area of research in functional analysis. There is a huge list of affirmative answers to Tingley's problem, see for example \cite{cue,fer,fer2,fer3,fer4,fer5,fer6,hat2,hir,leu2,mor,
per2,per4,per5,tana2,tana3,tana4,wan1,wan2}.
However the problem is still open even for finite dimensional Banach spaces. In fact,
only in 2022 Banakh \cite{ban2} obtained an affirmative answer to Tingley's problem for 2-dimensional Banach spaces, the question remains as an open challenge for Banach spaces with dimension greater than or equal to $3$.

Tingley's problem motivated the study of  the so called Mazur--Ulam property. A real or complex Banach space $E$ has the {\it Mazur--Ulam property}
if for each Banach space $F$, every surjective isometry
$\del\colon S_E\to S_F$ can be extended to a surjective real linear isometry between the whole spaces $E$ and $F$.
Many mathematicians have been involved in the study of the Mazur--Ulam property on several classes of Banach spaces (cf. \cite{cab2,che,cue2,hat,hat3,tan,tan2,WangNiu2022}). A refinement of the Mazur--Ulam property was introduced by Hatori in \cite{hat}. A complex Banach space $E$ is said to have the {\it complex Mazur--Ulam
property}, if for any complex Banach space $F$, every surjective isometry $\del\colon S_E\to S_F$ admits an extension to a surjective real linear isometry between
the whole spaces $E$ and $F$. 
In the just quoted reference, Hatori establishes a sufficient condition to guarantee that a complex Banach space $E$ has the complex Mazur--Ulam property.
The condition is related to the Hausdorff distance between any two
maximal convex subsets of $S_E$.

The purpose of this paper is to solve Tingley's problem for a class of complex Banach spaces which do not satisfy the condition on the Hausdorff distance required in Hatori's theorem \cite{hat}.
The main result of this paper is applied to several  classic complex Banach spaces,
including the Banach space $\Lipo$ of all Lipschitz continuous complex-valued functions on $[0,1]$ with norm $\VB{a}=|a(c)|+L(a)$, where $c$ is a fixed element in $[0,1]$, and the space $C^1([0,1])$ of all continuously differentiable complex-valued functions on $[0,1],$ with respect to the norm defined by  $\|f\| =|f(0)|+\sup_{t\in[0,1]}|f'(t)|$.

Let $X$ be a locally compact Hausdorff space and $C_0(X)$ the Banach space of all complex-valued continuous functions on $X$ which vanish at infinity,
equipped with the supremum norm $\Vinf{f}=\sup_{x\in X}|f(x)|$ for $f\in C_0(X)$. We write $C(X),$ instead of $C_0(X),$ in case that $X$ is compact. 
Let $A$ be a complex normed linear subspace of $C_0(X)$. For each $x \in X$, we define $\d_{x} : A \to \C$ by $\d_{x}(f) = f(x)$ for every $f \in A$.
The Choquet boundary of $A$, $\cha,$ is the set of all points $x \in X$ for which $\d_x$ becomes
an extreme point of the unit ball, $A_1^*,$ of
the dual space, $A^*,$ of $A$. 
The subspace $A$ is said to be {\it extremely C-regular}
if for each $\varepsilon>0$, $x\in\cha$, and each open neighborhood $O$ of $x$ in $X$ there exists $f\in\SA$ such that $f(x)=1$ and $|f|<\varepsilon$ on $X\sm O$.

Briefly speaking, the fundamental theorem of calculus states that the equality
$\fr{d}{dt}\int_0^tf(s)\,ds=f(t)$ holds for all $t\in[0,1]$ and $f\in C([0,1])$.
We may regard the integral, $\int_0^t,$ and derivative, $\fr{d}{dt},$ as complex linear operators from $C([0,1])$ to $C^1([0,1])$ and from $C^1([0,1])$ to $C([0,1])$, respectively. Then $C^1([0,1])$ is a Banach space with respect to the norm $\|f\| =|f(0)|+\Vinf{f'}$ for $f\in C^1([0,1])$. Motivated by the space $C^1([0,1])$ with the mentioned operators,
we introduce a class of complex normed spaces, including $C^1([0,1])$.

\begin{thm}\label{thm1}
Let $B$ be a complex normed space with norm
$\VB{\cdot}$, for which there exist 
an extremely C-regular subspace $A$ of $C_0(X)$,
bounded complex linear operators
$\INT\colon A\to B$, $\der\colon B\to A,$
and a bounded linear functional
$\con\colon B\to\C$ with the following properties:
\begin{enumerate}[{(i)}]
\item\label{(1)}
$(\der\circ\INT)(f)=f,$ for every $f\in A$,
\item\label{(4)}
$(\con\circ\INT)(f)=0,$ for every $f\in A$,
\item\label{(5)}
$\VB{a}=|\con(a)|+\Vinf{\der(a)},$ for every $a\in B$,
\item\label{(6)}
there exists $e\in\ker\der$, the kernel of $\der$, with $\con(e)=\VB{e}=1$.
\end{enumerate}
If $\del\colon\SAo\to\SAo$ is a surjective isometry with respect to the norm $\Vs{\cdot}$,
then $\Delta$ extends to a surjective real linear isometry on $\Ao$.
\end{thm}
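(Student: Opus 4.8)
The plan is to first unfold the concrete structure hidden in conditions (i)--(iv) and then to attack the resulting model space by a facial/extreme-point analysis of its unit sphere.

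\emph{Structural reduction.} Set $P=\INT\circ\der\colon\Ao\to\Ao$. By (i), $P^2=\INT(\der\INT)\der=\INT\der=P$, so $P$ is a bounded complex-linear projection; since $\INT$ is injective (it has the left inverse $\der$), one has $\ker P=\ker\der$. If $a\in\ker\der$, then (iii) gives $\VB{a}=|\con(a)|$, so $\con$ is injective on $\ker\der$, and together with (iv) this forces $\ker\der=\C e$. Applying $\con$ to the decomposition $a=\INT(\der a)+(a-\INT\der a)$ and using (ii) and (iv) yields $a=\INT(\der a)+\con(a)\,e$ for every $a\in\Ao$. Hence $a\mapsto(\con(a),\der a)$ is a complex-linear bijection of $\Ao$ onto $\C\oplus A$ which, by (iii), is isometric when the target carries the norm $\|(\mu,f)\|=|\mu|+\Vinf{f}$. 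I would therefore work in the model $\Ao\cong\C\oplus_1 A$, with $e$ corresponding to $(1,0)$ and $\INT(A)$ to $\{0\}\oplus A$.

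\emph{Geometry of $\SAo$.} By a standard property of surjective isometries between unit spheres, $\del$ is antipodal, $\del(-u)=-\del(u)$. Next I would describe the maximal convex subsets of $\SAo$. Writing $\Ao^*\cong\C\oplus_\infty A^*$, the extreme points of the dual unit ball are the pairs $\Phi=(\nu,\psi)$ with $\nu\in\T$ and $\psi\in\ext{A_1^*}$, and by the description of extreme functionals on subspaces of $C_0(X)$ one has $\ext{A_1^*}=\{\lambda\,\d_x:\lambda\in\T,\ x\in\cha\}$. To each such $\Phi$ attach the face $F_\Phi=\{u\in\SAo:\Re\Phi(u)=1\}$; these are precisely the maximal convex subsets of $\SAo$. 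The extreme $C$-regularity of $A$ enters here: it provides, for each $x\in\cha$ and neighbourhood $O$, functions in $\SA$ peaking near $x$, which is what makes the faces $F_\Phi$ rich enough to be mutually separated. This is the substitute, in our setting, for Hatori's Hausdorff-distance hypothesis.

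\emph{Transport by $\del$.} I would then invoke the well-known fact that a surjective isometry between unit spheres carries maximal convex subsets onto maximal convex subsets, so $\del$ induces a bijection of the collection $\{F_\Phi\}$. The scalar circle $\T e$ is singled out combinatorially as the set of points lying in $F_{(\nu,\psi)}$ for a fixed first coordinate $\nu$ and \emph{all} admissible $\psi$, so $\del(\T e)$ must be another such distinguished circle; tracking the incidences of the remaining faces then produces a homeomorphism $\sigma\colon\cha\to\cha$ together with continuous unimodular weight functions recording the phase behaviour in the $\C$- and $A$-coordinates. Pinning down the images $\del(\pm e),\del(\pm i e)$ and the images of the peak-type elements supplied by extreme $C$-regularity should determine $\del$ on a total subset of $\SAo$. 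Finally I would assemble from $\sigma$ and the weight functions a surjective \emph{real}-linear isometry $T\colon\Ao\to\Ao$ (a weighted composition in the $A$-coordinate, acting by a real-linear unimodular map on the $\C$-coordinate, possibly with complex conjugation), and check that $T$ and $\del$ coincide on the extreme points of the unit ball $\Ao_1$, hence, by continuity and convexity of the faces, on all of $\SAo$. I expect the main obstacle to be exactly this transport step: because the scalar summand $\C e$ meets \emph{every} maximal convex subset, the two summands are strongly coupled in the face lattice, and disentangling them while simultaneously controlling the complex phases, so that the extension is forced to be only real-linear, is the delicate heart of the argument and precisely the point at which extreme $C$-regularity rather than a Hausdorff-distance bound must do the work.
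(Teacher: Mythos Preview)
Your structural reduction to $\Ao\cong\C\oplus_1 A$ is correct and is exactly the identification the paper exploits (though the paper realises it via an embedding $a\mapsto\ta$, $\ta(x,z)=\con(a)+\der(a)(x)z$, into $C(X\times\T)$, which has the effect of parameterising the dual extreme points by $\T\times(\cha\times\T)$ rather than by $\T\times\ext{A_1^*}$). The overall strategy --- track how $\del$ permutes maximal convex faces, read off a base map on $\cha$ and unimodular weights, then assemble a real-linear extension --- is also the paper's strategy.

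The genuine gap is in the transport step, precisely where you flag the difficulty. Your claim that ``$\T e$ is singled out combinatorially'' as the common intersection of all faces $F_{(\nu,\psi)}$ with fixed $\nu$ does characterise $\nu e$, but it does \emph{not} follow that $\del$ respects this grouping: $\del$ permutes the faces, not the parameter $(\nu,\psi)$, and you give no intrinsic description of the family $\{F_{(\nu,\psi)}:\psi\}$ that $\del$ must preserve. Indeed the paper exhibits (Remark~\ref{rem2}) that when $\cha$ is a single point there is a surjective isometry swapping the two summands, so $\del(\T e)\subset\INT(A)$ and your conclusion fails outright; this case must be split off and handled via the known result for $\ell_1^{(2)}$. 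For $|\cha|\ge2$ the paper does not prove $\del(\T e)=\T e$ directly at all: instead it proves the weaker but sufficient fact $\con(\del(\la e))\neq0$ (Lemma~\ref{lem4.2}), and from there grinds out the form of the induced maps using Hausdorff-distance computations between carefully chosen faces (Lemmata~\ref{lem3.5}--\ref{lem3.8}). These metric computations, not pure incidence combinatorics, are what pin down that the base map depends only on $x\in\cha$ and what control the phases; your outline has no substitute for them. Finally, your proposed endgame ``check that $T$ and $\del$ coincide on the extreme points of the unit ball $\Ao_1$'' is unreliable here: for an $\ell_1$-sum the extreme points are $\T e\cup\{0\}\times\ext{A_1}$, and an extremely C-regular subspace $A\subset C_0(X)$ may have few or no extreme points in its unit ball, so this set need not be norming. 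The paper instead uses the approximation scheme of Lemma~\ref{lem4.7} (building peaking elements $h_t\in\la P_{x}$) to obtain pointwise formulae for $\con(\del(a))$ and $\der(\del(a))$ valid for \emph{every} $a\in\SAo$, and only then passes to the positive homogeneous extension and Mazur--Ulam.
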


In order to understand the significance of Theorem~\ref{thm1}, we list some examples of Banach spaces $A$ and $B$ satisfying the hypotheses of the theorem--in section~\ref{sect6}, we shall discuss them in detail.

\begin{exs} As we shall see in section \ref{sect6}, the normed spaces $(B,\VB{\cdot})$ in the following list satisfy all the hypotheses $(\ref{(1)})$ through $(\ref{(6)})$ in Theorem~\ref{thm1}, although they are not, in general, uniform algebras (recall that unital uniform algebras can be characterized as those unital commutative Banach algebras satisfying $\|a^{2}\|=\|a\|^{2}$),  and hence they are out from the scope of the results by Hatori, Oi and Shindo Togashi.

\begin{enumerate}[$(1)$]
\item
Let $B$ be the complex linear space $\Lipo$ of all Lipschitz (continuous) complex-valued functions with the norm $\VB{a}=|a(c)|+L(a)$ for $a\in B$, where $c\in[0,1]$ and
$L(a)$ is the Lipschitz constant for $a$.

\item
Let $B$ be the Banach space $C^1([0,1])$ of all continuously differentiable complex-valued functions on $[0,1]$
with the norm $\VB{a}=|a(c)|+\Vinf{a'}$ for $a\in B$, where $c\in[0,1]$.

\item
Let $H(\Di)$ be the algebra of all analytic functions on the open unit disk $\Di$
and let $H^\infty(\Di)$ be the commutative Banach algebra
of all bounded analytic functions on $\Di$.
We define
$B=\set{a\in H(\Di):a'\in H^\infty(\Di)}$
with the norm $\VB{a}=|a(c)|+\Vinf{a'}$ for $a\in B$,
where $c\in\Di$.

\item
Let $A$ be the commutative Banach algebra $A(\Db)$ of all analytic functions
on $\Di$
which can be extended to
continuous functions on
the closed unit disk $\Db$.
Set $B=\set{a\in H(\Di):a'\in A\ \mbox{and}\ a'(0)=0}$
with $\VB{a}=|a(c)|+\Vinf{a'}$ for $a\in B$, where $c\in\Di$.

\end{enumerate}
\end{exs}

It is also worth to note that a recent result by Hatori provides sufficient conditions on a complex Banach space $E$ (one of them is the Hausdorff distance condition) to conclude that it satisfies  the complex Mazur--Ulam property (see \cite[Proposition~4.4.]{hat}).  We shall see in Remark~\ref{rem1} that even under the stronger hypothesis that $X$ is a compact Hausdorff space,  each complex Banach space $B$ satisfying the hypotheses in Theorem~\ref{thm1} fails the Hausdorff distance condition.

We also observe that any normed space $B$ under the hypotheses in Theorem~\ref{thm1} is
isometrically isomorphic to the $\ell_1$-direct sum, $A\oplus^{\ell_1} \mathbb{C},$ of an extremely C-regular subspace $A \subseteq C_0(X)$ and $\mathbb{C}$ (the mapping $B \ni a \mapsto (\mathcal{D}(a), \mathcal{C}(a)) \in  A\oplus^{\ell_1} \mathbb{C}$ defines a surjective linear isometry). Tingley's problem has been treated in the literature in the particular case of normed spaces which can be expressed as $\ell_1$-sums. For example, Wang and Orihara found a positive solution to Tingley's problem in the case of a surjective isometry between the unit spheres of two normed spaces which coincide with the $\ell_1$-sum of two families of strictly convex normed spaces (see \cite{WanOrihara2002}). The same authors also solved Tingley's problem in the case in which $E$ and $F$ are $\ell_1$-sums of spaces of the form $C_0(L,G)$, where $L$ is a locally compact Hausdorff space and $G$ is a strictly convex real normed space, and the number of involved summands counting both decompositions is $\geq 3$ (cf. \cite[Theorem 3.5]{wan2}). Moreover, let $\{G_i\}_{i\in \Gamma}$ be a family of strictly convex normed spaces and let $\Delta$ be an isometry from the unit sphere of $G=\bigoplus_{i\in \Gamma}^{\ell_1} G_i$ into the unit sphere of another normed space $E$. Li found in \cite[Theorem 2.7]{Li2016} sufficient conditions to assure that $\Delta$ extends to an isometry from $G$ to $E$. In our result, $B\cong  A\oplus^{\ell_1} \mathbb{C}$, where $A$ being an extremely C-regular subspace of some $C_0(X)$ is a too weak hypothesis to apply any of the previous results by Wang, Orihara and Li.  The techniques here follow rather new ideas.

\section{Preliminaries and auxiliary lemmata}

In Theorem~\ref{thm2}, we first consider a special (we assume completeness
of the norm and compactness of the topological space $X$),
but essential, case of Theorem~\ref{thm1}. A proof of the general statement of Theorem~\ref{thm1} will be given in section~\ref{sect5}.

\begin{thm}\label{thm2}
Let $B$ be a complex Banach space with norm $\V{\cdot}_B$,
for which there exist an extremely C-regular closed subspace $A$
of $C(X)$ for some compact Hausdorff space $X$, bounded linear operators $\INT\colon A\to B$,
$\der\colon B\to A$ and $\con\colon B\to\C$ satisfying the hypotheses $(\ref{(1)})$
to $(\ref{(6)})$ in Theorem~\ref{thm1}. Let $\del\colon S_B\to S_B$ be a surjective isometry with respect to the norm
$\V{\cdot}_B$. Then $\Delta$ extends to a surjective real linear isometry on $B$.
\end{thm}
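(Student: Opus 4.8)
The plan is to exploit the isometric identification $B\cong A\oplus^{\ell_1}\C$ furnished by $a\mapsto(\der(a),\con(a))$: surjectivity follows because $a=\INT(f)+\lambda e$ maps to $(f,\lambda)$ thanks to $(\ref{(1)})$, $(\ref{(4)})$, $(\ref{(6)})$, while injectivity and isometry come from $(\ref{(5)})$; note also that $(\ref{(5)})$ forces $\ker\der=\C e$. Dualizing, $B^*\cong A^*\oplus^{\ell_\infty}\C$, so the extreme points of the dual unit ball are the functionals $a\mapsto\gamma\,\der(a)(x)+z\,\con(a)$ with $x\in\cha$ and $\gamma,z\in\T$, using that the extreme points of $A_1^*$ are precisely the $\gamma\delta_x$ with $x\in\cha$. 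I would first record the ensuing description of the maximal convex subsets of $S_B$: a short computation with $(\ref{(5)})$ shows that the face supported by such a functional is
\[
\mathcal F_{(x,\gamma,z)}=\{\,a\in S_B:\ \gamma\,\der(a)(x)=\Vinf{\der(a)}\ \text{and}\ z\,\con(a)=|\con(a)|\,\},
\]
that is, $\der(a)$ attains its sup-norm at $x$ with phase $\bar\gamma$ and $\con(a)$ has phase $\bar z$. Throughout I would use two standard facts about a surjective isometry $\del$ of a sphere: Tingley's identity $\del(-a)=-\del(a)$, and that $\del$ carries maximal convex subsets bijectively onto maximal convex subsets, inducing a bijection of the facial lattice that respects inclusions, intersections and antipodes.

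The crux of the first half is to pin down the scalar circle $\con^{-1}(\T)\cap\ker\der=\{\mu e:\mu\in\T\}$ intrinsically. The key observation is that $\mu e$ lies in $\mathcal F_{(x,\gamma,z)}$ exactly when $z=\bar\mu$, and then for \emph{every} $x\in\cha$ and \emph{every} $\gamma\in\T$; conversely, an element lying in $\mathcal F_{(x,\gamma,\bar\mu)}$ for all $x,\gamma$ must have $\der(a)=0$, hence equals $\mu e$. Thus the scalar points are precisely those belonging to the largest possible families of maximal convex subsets, a property visible in the facial lattice and therefore preserved by $\del$. Consequently $\del$ maps the scalar circle onto itself, say $\del(\mu e)=\sigma(\mu)e$, and the isometry property together with $\del(-a)=-\del(a)$ forces $\sigma$ to be a surjective isometry of $\T$ with $\sigma(-\mu)=-\sigma(\mu)$, hence a rotation or a conjugation followed by a rotation. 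After composing $\del$ with a suitable surjective complex-linear isometry of $B$ (multiplying the $\con$-phase by a fixed unimodular constant), I may assume $\sigma(\mu)=\mu$ for all $\mu$ or $\sigma(\mu)=\bar\mu$ for all $\mu$; these two cases will ultimately produce a complex-linear, respectively conjugate-linear, extension.

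With the scalar direction normalized, I would decode the bijection induced by $\del$ on the remaining parameters. The $z$-coordinate has already been tied to the scalar phase, so the sub-lattice governing the $A$-part should, via the inclusion/intersection structure of the $\mathcal F_{(x,\gamma,z)}$, yield a homeomorphism $\tau$ of $\cha$ (and of its closure) together with a continuous unimodular weight $w$. This is where extreme C-regularity is essential: for each $x\in\cha$ and each neighbourhood $O$ it supplies $f\in S_A$ peaking near $x$, and pushing such $f$ into $B$ through $\INT$ (and combining with $e$) produces test elements of $S_B$ that localize the faces and let me recover $\tau$ and $w$ pointwise. I would then assemble the candidate $T(a)=\INT\big(w\cdot(\der(a)\circ\tau)\big)+\con(a)\,e$ (with $\der(a)$ and $\con(a)$ replaced by $\overline{\der(a)}$ and $\overline{\con(a)}$ in the conjugate case), check via $(\ref{(5)})$ that $T$ is a surjective real-linear isometry of $B$, and verify $T=\del$ on $S_B$ by testing against the peaking elements above.

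The hard part will be the middle step: disentangling the three parameters in the induced facial bijection and proving that $\del$ does not mix the $A$-summand with the $\C$-summand. Because $A$ is only assumed extremely C-regular and is in general far from strictly convex, the faces $\mathcal F_{(x,\gamma,z)}$ overlap substantially — distinct $z$ share the entire sub-face where $\con=0$ — which is exactly why Hatori's Hausdorff-distance condition fails and why the $\ell_1$-sum theorems of Wang--Orihara and Li do not apply here. Controlling these overlaps, and then upgrading the pointwise data $(\tau,w)$ into a genuine additive (real-linear) map coinciding with $\del$, is the technical heart of the argument; the peaking functions furnished by extreme C-regularity are the main device for circumventing the absence of strict convexity.
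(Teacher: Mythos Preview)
Your overall architecture is the same as the paper's---identify the maximal convex subsets of $S_B$ via the extreme points of $B^*\cong A^*\oplus^{\ell_\infty}\C$, track the bijection of faces induced by $\del$, and read off a weighted-composition formula for the extension---but the step you treat as easy is in fact the hardest, and your argument for it does not work.

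The gap is the claim that the scalar circle $\{\mu e:\mu\in\T\}$ is $\del$-invariant because scalar points are ``those belonging to the largest possible families of maximal convex subsets.'' This is false as a distinguishing criterion. Take any $f\in S_A$ with $|f|\equiv 1$ on $\cha$ (for instance $f=\unit$ when $A=C(X)$) and set $a_0=\INT(f)$. Then $\con(a_0)=0$, so $a_0\in\mathcal F_{(x,\overline{f(x)},z)}$ for \emph{every} $x\in\cha$ and \emph{every} $z\in\T$: a family just as large as the one containing $\mu e$, and whose intersection is likewise the singleton $\{a_0\}$. So the raw size or intersection pattern of the family of faces through a point does not single out the scalars. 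More decisively, the paper exhibits (see Remark~\ref{rem2}) an explicit surjective isometry of $S_B$, in the case $\cha=\{x_0\}$, that swaps $e$ and $\INT(1)$ and hence does \emph{not} preserve the scalar circle. Any purely facial-lattice argument that does not invoke $|\cha|\ge 2$ therefore cannot establish invariance of $\{\mu e\}$.

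The paper handles this by a different and considerably more delicate route: it never directly proves $\del(\T e)=\T e$, but rather uses Hausdorff distances between faces (Lemmata~\ref{lem3.5}--\ref{lem3.8}) to constrain the induced maps $\al,\phi_1,\phi_2$, and then---crucially assuming $|\cha|\ge 2$---proves in Lemma~\ref{lem4.2} that $\con(\del(\lambda e))\neq 0$ for all $\lambda\in\T$. This single lemma is what rules out the $e\leftrightarrow\INT(1)$ swap and forces $\al,\e$ to be constant (Lemma~\ref{lem4.3}); only then can the formulas $\con(\del(a))=[\con(a)]^{\e}\al$ and $\der(\del(a))(\phio(x))=\al\overline{\phit(x)}[\der(a)(x)]^{\et(x)}$ be derived. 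Your sketch correctly identifies ``disentangling the parameters'' as the technical heart, but it is precisely the non-mixing of the $\C$- and $A$-summands that requires the Hausdorff-distance machinery and Lemma~\ref{lem4.2}, not a one-line facial argument.
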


In Remark~\ref{rem1}, just before Lemma~\ref{lem3.6},
we show that no complex Banach space $B$ enjoying the hypotheses in Theorem~\ref{thm2} satisfies the Hausdorff distance condition introduced by Hatori in \cite{hat}.

The main idea behind the proof of Theorem~\ref{thm2} consists in embedding the complex Banach space $B$ into $C(Y)$ for some compact Hausdorff space $Y$. Let $\TB$ be the isometric image of $B$ in $C(Y)$. If $\del\colon\SAo\to\SAo$ is a surjective isometry, then $\del$ induces a surjective isometry $\t{\del}\colon\SB\to\SB$.
We can characterize the set of all maximal convex subsets of $\SB$.
Combining the ideas in \cite{cue,hat2}, we obtain enough information about the isometry $\t{\del}$ on $\SB$.
Since $\t{\del}$ is induced by $\del$ on $\SAo$, we can determine $\del$ from the properties of $\t{\del}$.

In the rest of this section, we assume that $A$ and $B$ satisfy the hypotheses in Theorem~\ref{thm2} for the bounded linear operators $\INT$, $\der$ and $\con$.
We denote by $\T$ the unit circle in the field of complex numbers  $\C$.
For each $a\in B$ and $(x,z)\in X\times\T$, we define $\ta(x,z)$ by
\begin{equation}\label{tilde}
\ta(x,z)=\con(a)+\der(a)(x)z.
\end{equation}
Since $\der(a)$ belongs to $A$, we see that
the function $\ta$ is continuous on $X\times\T$ with
respect to the product topology.
We set 
\[
\TB=\set{\ta\in C(X\times\T):a\in B}.
\]
Then $\TB$ is a closed linear subspace of $C(X\times\T)$ with the supremum norm $\Vinf{\cdot}$ on $X\times\T$ (we note that $\TB$ is closed because the norm
$\Vs{\cdot}$ is complete).  We define a mapping $U \colon (\Ao, \Vs{\cdot}) \to (\TB,\Vinf{\cdot})$ given by
\[
U(a)=\ta
\qq(a\in\Ao).
\]
We claim that $U$ is a surjective complex linear map from $\Ao$ onto $\TB$. Namely, by \eqref{tilde}, $\Vinf{U(a)}=\Vs{a}$ holds for all $a\in\Ao$:
In fact, for each $a\in\Ao$, there exist $z_0,z_1\in\T$ and $x_0\in X$
such that $\con(a)=|\con(a)|z_0$ and $\der(a)(x_0)=\Vinf{\der(a)}z_1$.
Then
\begin{align*}
|U(a)(x_0,z_0\ov{z_1})|
&=
|\con(a)+\der(a)(x_0)z_0\ov{z_1}|
=|(|\con(a)|+\Vinf{\der(a)})z_0|\\
&=
|\con(a)|+\Vinf{\der(a)}=\VB{a}.
\end{align*}
We thus obtain $\VB{a}\leq\Vinf{U(a)}$.
For each $(x,z)\in X\times\T$, we have
\begin{align*}
|U(a)(x,z)|
&=
|\con(a)+\der(a)(x)z|\leq|\con(a)|+|\der(a)(x)|\\
&\leq
|\con(a)|+\Vinf{\der(a)}=\VB{a},
\end{align*}
which yields $\Vinf{U(a)}\leq\VB{a}$.
Consequently,
\begin{equation}\label{norm}
\Vinf{\ta}=\Vinf{U(a)}=\VB{a}\qq(a\in\Ao).
\end{equation}
Therefore, the map $U$ is a surjective complex linear isometry
from $(\Ao,\VB{\cdot})$ onto $(\TB,\Vinf{\cdot})$.
In particular, $U(\SAo)=\SB$.

Here, we note that $\INT(f)\in\SAo$ for $f\in\SA$. In fact,
\[
\VB{\INT(f)}=|\con(\INT(f))|+\Vinf{\der(\INT(f))}
=\Vinf{f}=1
\]
by hypotheses $(\ref{(1)})$ and $(\ref{(4)})$.
This implies $\INT(\SA)\subset\SAo$. 

The purpose of this paper is to prove that every surjective isometry
$\del$ on $(\SAo,\Vs{\cdot})$ can be extended to a surjective real linear isometry on $(\Ao,\Vs{\cdot})$.
The complex linear isometry $U$, defined as above,
embeds $B$ into $C(X\times\T)$ with $U(\SAo)=\SB$.
We see that the mapping $U\Delta U^{-1}$
is a well defined, surjective isometry from $(\SB, \Vinf{\cdot})$
onto itself.
We set $T=U\del U^{-1}$ and investigate its properties.
\[
\begin{CD} 
\SAo@>{\Delta}>>\SAo\\ 
@V{U}VV
@VV{U}V\\ 
\SB@>>{T}>\SB
\end{CD} 
\]
By definition $TU=U\Delta$ holds, which is written as
\begin{equation}\label{T}
T(\ta) = \widetilde{\Delta(a)} \qq (a \in \SAo).
\end{equation}
Now we recall that Choquet boundary for $A$, denoted by $\cha$,
is the set of all points $x\in X$ so that the point evaluation $\d_x$,
defined by $\d_x(f)=f(x)$ for $f\in A$, is an extreme point
of the closed unit ball, $A_1^*,$ of the dual space of $A$. For each $\la\in\T$ and $\eta\in\cha\times\T$, we define the subset $\la V_\eta$ of $\SB$ as
\begin{equation}\label{V}
\la V_\eta=\set{\tf\in\SB:\tf(\eta)=\la}.
\end{equation}
The properties $\con(e)=1$ and $\der(e)=0$
of $e\in\Ao$ combined with the definition in \eqref{tilde} assure that
\[
\te\equiv 1
\q\mbox{on}\q
X\times\T.
\]
The next lemma plays an important role when we prove
the uniqueness of maximal convex subsets.

\begin{lem}\label{lem2.1} Let $(\la_1,\eta_1),(\la_2,\eta_2)\in\T\times(\M\times\T)$.
If $\la_1V_{\eta_1}\subset\la_2V_{\eta_2}$, then
$(\la_1,\eta_1)=(\la_2,\eta_2)$.
\end{lem}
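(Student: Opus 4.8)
The plan is to strip off the unimodular scalar first and then separate the two evaluation points by exploiting the extremely C-regular structure of $A$ together with the explicit formula \eqref{tilde} for $\ta$.

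\smallskip

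I would begin with the observation that $\lambda_1 V_{\eta_1}$ always contains a distinguished constant function. Indeed, by hypothesis $(\ref{(6)})$ we have $\te\equiv 1$ on $X\times\T$ and $\VB{\lambda_1 e}=1$, so $\widetilde{\lambda_1 e}=\lambda_1\te$ is the constant function $\lambda_1$ and lies in $\SB$; since its value at $\eta_1$ is $\lambda_1$, it belongs to $\lambda_1 V_{\eta_1}$. Feeding this element into the assumed inclusion $\lambda_1 V_{\eta_1}\subset\lambda_2 V_{\eta_2}$ forces $\lambda_1\te(\eta_2)=\lambda_2$, and as $\te(\eta_2)=1$ this immediately gives $\lambda_1=\lambda_2$. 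Write $\lambda:=\lambda_1=\lambda_2$ for the remainder.

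\smallskip

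For the equality $\eta_1=\eta_2$ I would argue by contradiction. Write $\eta_1=(x_1,z_1)$ and $\eta_2=(x_2,z_2)$ with $x_1,x_2\in\cha$, and suppose $\eta_1\neq\eta_2$. The idea is to manufacture a single test function in $\lambda V_{\eta_1}$ whose value at $\eta_2$ is not $\lambda$. Since $x_1\in\cha$, I would invoke extreme C-regularity: for any open neighbourhood $O$ of $x_1$ and any $\varepsilon>0$ there is $f\in\SA$ with $f(x_1)=1$ and $|f|<\varepsilon$ on $X\sm O$. Setting $a=\lambda\ov{z_1}\,\INT(f)$, hypotheses $(\ref{(1)})$ and $(\ref{(4)})$ yield $\der(a)=\lambda\ov{z_1}f$ and $\con(a)=0$, whence by \eqref{tilde} $\ta(x,z)=\lambda\ov{z_1}f(x)z$, and by $(\ref{(5)})$ one checks $\VB{a}=\Vinf{\der(a)}=1$. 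Evaluating at $\eta_1$ gives $\ta(\eta_1)=\lambda\ov{z_1}f(x_1)z_1=\lambda$, so $\ta\in\lambda V_{\eta_1}$.

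\smallskip

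It then remains to evaluate $\ta(\eta_2)=\lambda\ov{z_1}f(x_2)z_2$ and split into the two possible ways that $\eta_1\neq\eta_2$ can occur. If $x_1\neq x_2$, then, $X$ being Hausdorff, I choose $O$ to avoid $x_2$ and take $\varepsilon<1$, so that $|\ta(\eta_2)|=|f(x_2)|<1$ and hence $\ta(\eta_2)\neq\lambda$. If instead $x_1=x_2$, then necessarily $z_1\neq z_2$, and $f(x_2)=f(x_1)=1$ gives $\ta(\eta_2)=\lambda\ov{z_1}z_2\neq\lambda$. In either case $\ta\notin\lambda V_{\eta_2}$, contradicting the inclusion, so $\eta_1=\eta_2$ and the lemma follows. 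I expect the only genuinely delicate step to be the construction of the separating function, that is, the correct application of the extremely C-regular hypothesis and the verification, via $(\ref{(1)})$, $(\ref{(4)})$ and the additive norm formula $(\ref{(5)})$, that the normalisation $a=\lambda\ov{z_1}\,\INT(f)$ indeed lands in $\SAo$; the scalar extraction and the final case analysis are routine bookkeeping with \eqref{tilde}.
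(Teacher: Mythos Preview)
Your proof is correct and follows essentially the same approach as the paper: both first use the constant function $\te\equiv1$ to deduce $\lambda_1=\lambda_2$, and then use extreme C-regularity to build the test function $a=\lambda\ov{z_1}\,\INT(f)$ with $f(x_1)=1$ to separate $\eta_1$ from $\eta_2$. The only cosmetic difference is that the paper treats $x_1\neq x_2$ and $z_1\neq z_2$ in two sequential paragraphs while you fold them into a single construction with a case split at the end.
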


\begin{proof}
Let $e\in\Ao$ be the element given by the hypothesis $(\ref{(6)})$
 in Theorem~\ref{thm2}. 
In this case we have $\te\in V_{\eta_1}$, since $\te\equiv 1$ on $X\times\T$, and hence $\te(\eta_1)=1=\te(\eta_2)$.
By the assumption, $\la_1\te\in\la_1V_{\eta_1}\subset\la_2V_{\eta_2}$,
which implies $\la_1\te(\eta_2)=\la_2$.
These equalities show that $\la_1=\la_1\te(\eta_2)=\la_2$, and thus
$\la_1=\la_2$.

Since $\eta_j\in\MT$, there exist $x_j\in\cha$ and $z_j\in\T$
such that $\eta_j=(x_j,z_j)$ for $j=1,2$.
We shall next prove that $x_1=x_2$. Otherwise, since $A$ is extremely C-regular, we can find $f\in\SA$ so that
$f(x_1)=1$ and $|f(x_2)|<1$. Setting $a=\mathcal{I}(\la_1\ov{z_1}f)\in B$, we obtain $\con(a)=0$ and $\der(a)=\la_1\ov{z_1}f$ by hypotheses $(\ref{(1)})$ and $(\ref{(4)})$, and thus $a=\mathcal{I}(\la_1\ov{z_1}f)\in\SAo$.
Applying \eqref{tilde}, we obtain $\tf(x,z)=\der(a)(x)z=\la_1\ov{z_1}f(x)z$
for all $(x,z)\in X\times\T$.
By the choice of $f\in\SA$, we have $\tf(x_1,z_1)=\la_1$ and $|\tf(x_2,z_2)|<1$.
This contradicts the assumption that $\la_1V_{\eta_1}\subset\la_2V_{\eta_2}$,
since $\tf\in\la_1V_{\eta_1}\setminus\la_2V_{\eta_2}$ with $\eta_j=(x_j,z_j)$.
We have proved that $x_1=x_2$.

We shall finally show that $z_1=z_2$. By applying that $A$ is extremely C-regular, we can find $f_1\in S_{A}$ satisfying $f_1 (x_1) =1$ (and an additional property not required here). We set $B\ni b=\,\mathcal{I}(\la_1 \ov{z_1}\,f_1)$. According to hypotheses $(\ref{(1)})$ and $(\ref{(4)})$, we get $\con(b)=0$ and $\der(b)=\la_1\ov{z_1}\,f_1$. Then $b$ satisfies the following properties: $\tg\in\SB$ ($b\in S_B$) and $\tg(x_1,z_1)=\la_1$, that is, $\tg\in\la_1V_{\eta_1}$.
	We deduce from the hypothesis of the lemma that $\tg\in\la_2V_{\eta_2}$, which implies
	that $\la_2=\tg(x_2,z_2)=\la_1\ov{z_1}z_2$ by the choice of $b$ and the fact that $x_1=x_2$.
	Having in mind that $\la_1=\la_2$, we conclude that $z_1=z_2$. Consequently, $\eta_1=(x_1,z_1)=(x_2,z_2)=\eta_2$,
	and hence $(\la_1,\eta_1)=(\la_2,\eta_2)$.
\end{proof}

In order to determine the maximal
convex subsets of $S_{\TB}$, we need to characterize the extreme points of the closed unit ball, $B^*_1,$ of the dual space, $B^*,$ of $B$. For each $\eta\in X\times\T$, let $\d_\eta\colon\TB\to\C$ be the point evaluation
functional on $C(X\times \mathbb{T})$, defined by $\d_\eta(f)=f(\eta)$ for every $f\in C(X\times \mathbb{T})$. Since the Hahn--Banach extension of a functional on a closed subspace is not, in general, unique, there are, in principle, many Hahn--Banach extensions of each functional in $\tilde{B}^*$. For each $\phi$ in the closed unit ball of $\tilde{B}^*,$ by the Hahn--Banach theorem and the classical Riesz representation theorem, there exists a (non-necessarily unique) Borel regular measure $\sigma$ on $X\times \mathbb{T}$ such that $\phi (h)=\int_{X\times\T}h\,d\sigma$ for all $h\in\TB$ and the total variation, $\V{\sigma}$, of $\sigma$ satisfies $\V{\sigma}=\|\phi\|$. Let us consider an extreme point of the closed unit ball of $C(X\times \mathbb{T})^*$ of the form $\delta_{\eta}$ with $\eta\in X\times \mathbb{T},$ the restricted functional $\delta_{\eta}|_{\tilde{B}}$ and the set of all Borel regular measures $\sigma$ representing each one of the different Hahn-Banach extensions of $\delta_{\eta}|_{\tilde{B}}$. Each one of these Borel regular measures will be called a {\it representing measure} for $\d_\eta$.

\begin{lem}\label{lem2.2}
Let $\sigma$ be a representing measure for
$\d_{\eta_0}|_{\tilde{B}}$ with $\eta_0=(x_0,z_0)$ in $\M\times\T$.
Then $\sigma$ is the Dirac measure concentrated at $\eta_0$. In other words the Hahn--Banach extension of $\d_{\eta_0}|_{\tilde{B}}$ to $C(X\times \mathbb{T})^*$ is unique and coincides with $\d_{\eta_0}$.
\end{lem}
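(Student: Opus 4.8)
The plan is to prove that any representing measure $\sigma$ for $\d_{\eta_0}|_{\TB}$ is forced to be the Dirac mass at $\eta_0$, by first showing that $\sigma$ must be a probability measure and then localizing it at $\eta_0=(x_0,z_0)$ using the peaking functions supplied by extreme $C$-regularity. As a preliminary step I would record that $\d_{\eta_0}|_{\TB}$ has norm one: since $\te\equiv 1$ on $X\times\T$, we have $\d_{\eta_0}(\te)=1=\Vinf{\te}$, while $\d_{\eta_0}$ has norm $1$ on $C(X\times\T)$. Hence a representing measure satisfies $\Vert\sigma\Vert=1$, and simultaneously $\sigma(X\times\T)=\int\te\,d\sigma=\d_{\eta_0}(\te)=1$. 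The coincidence of $\sigma(X\times\T)=1$ with the total variation $\Vert\sigma\Vert=1$ forces, via the polar decomposition $d\sigma=h\,d|\sigma|$ with $|h|=1$ a.e., that $h\equiv 1$ a.e.; thus $\sigma$ is a regular Borel probability measure.

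Next I would localize $\sigma$ in the $X$-variable. Fix an open neighbourhood $O$ of $x_0$ and $0<\varepsilon<1$, and use extreme $C$-regularity to pick $f\in\SA$ with $f(x_0)=1$ and $|f|<\varepsilon$ on $X\sm O$. Setting $a=\INT(f)$, hypotheses (\ref{(1)}) and (\ref{(4)}) give $\der(a)=f$ and $\con(a)=0$, so $\ta(x,z)=f(x)z$ and $\ta\in\TB$. Then
\[
\int_{X\times\T}f(x)z\,d\sigma(x,z)=\ta(\eta_0)=f(x_0)z_0=z_0.
\]
Multiplying by $\ov{z_0}$, taking real parts, and using $\Re(\ov{z_0}f(x)z)\le|f(x)|\le 1$ together with $\sigma(X\times\T)=1$, I obtain $\int\bigl(1-\Re(\ov{z_0}f(x)z)\bigr)\,d\sigma=0$ with nonnegative integrand, whence $|f(x)|=1$ $\sigma$-a.e. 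Since $|f|<\varepsilon<1$ on $X\sm O$, this forces $\sigma\bigl((X\sm O)\times\T\bigr)=0$. Letting $O$ shrink and separating points by Hausdorffness of $X$ (via regularity of $\sigma$), I conclude that $\sigma$ is supported on $\set{x_0}\times\T$.

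Finally I would pin down the $\T$-variable. Let $\nu$ be the probability measure on $\T$ induced by $\sigma$ on $\set{x_0}\times\T$. For the same $a=\INT(f)$ (so $\der(a)(x_0)=f(x_0)=1$), the identity $\int\ta\,d\sigma=\ta(\eta_0)$ reduces, using $\ta(x_0,z)=\con(a)+\der(a)(x_0)z$, to
\[
\int_{\T}z\,d\nu(z)=z_0 .
\]
The equality-in-the-triangle-inequality argument from the previous step (now with $\Re(\ov{z_0}z)\le 1$ and $\nu$ a probability measure) forces $z=z_0$ $\nu$-a.e., so $\nu=\d_{z_0}$ and therefore $\sigma=\d_{\eta_0}$. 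As every representing measure for $\d_{\eta_0}|_{\TB}$ equals $\d_{\eta_0}$, the Hahn--Banach extension is unique.

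The step I expect to be the main obstacle is the second one, converting the pointwise peaking estimates into the statement that $\sigma$ charges no mass outside $\set{x_0}\times\T$. The delicate ingredients are the correct handling of the equality case of the triangle inequality for complex integrals (which is exactly why the reduction to a probability measure in the first step is indispensable) and the passage from ``every neighbourhood of $x_0$ carries full $\sigma$-mass'' to ``$\sigma$ is supported on $\set{x_0}\times\T$,'' which rests on the regularity of $\sigma$ and the Hausdorff separation property of $X$.
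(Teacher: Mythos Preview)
Your proof is correct and follows essentially the same route as the paper's: first reduce $\sigma$ to a probability measure via $\te\equiv1$, then use the peaking functions from extreme $C$-regularity to localize $\sigma$ on $\{x_0\}\times\T$, and finally use $\int z\,d\sigma=z_0$ together with the equality case of $\Re(\ov{z_0}z)\le 1$ to pin down the $\T$-coordinate. The only cosmetic difference is that in the $X$-localization the paper uses the crude bound $1\le\sigma(O\times\T)+\varepsilon$ directly, whereas you invoke the equality case of the triangle inequality to get $|f(x)|=1$ $\sigma$-a.e.; both arguments yield $\sigma\bigl((X\sm O)\times\T\bigr)=0$ and the rest is identical.
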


\begin{proof}
Fix a positive $\varepsilon$ and take any open neighbourhood $O$ of $x_0$ in $X$.
By the extreme C-regularity of $A$ there exists $f\in\SA$
satisfying $f(x_0)=1$ and $|f|<\varepsilon$ on $X\setminus O$.
Here, we recall that $\te=1$ on $X\times\T$.
We deduce from $\V{\d_{\eta_0}|_{\tilde{B}}}=1=\d_{\eta_0}(\te)$ that
$\sigma$ is a probability measure on $X\times\T$ (see \cite[p.81]{bro}).
By \eqref{tilde} and the hypotheses $(\ref{(1)})$ and $(\ref{(4)})$, we see that the element $a=\mathcal{I}(f)\in\SAo$ satisfies $\tf(x,z)=f(x)z$
for all $(x,z)\in X\times\T$.
In particular, we have $\tf(\eta_0)=z_0$ by the choice of $f\in A$.
The property $|f|<\varepsilon$ on $X\sm O$ ensures that
$|\tf|<\varepsilon$ on $(X\sm O)\times\T$.
It follows from the above properties of $\tf$ that
\begin{align*}
1
&=
|z_0|=
|\d_{\eta_0}(\tf)|
=\left|\int_{X\times\T}\tf\,d\sigma\right|
\leq
\left|\int_{O\times\T}\tf\,d\sigma\right|
+\left|\int_{(X\sm O)\times\T}\tf\,d\sigma\right|\\
&\leq
\int_{O\times\T}|\tf|\,d\sigma
+\int_{(X\sm O)\times\T}|\tf|\,d\sigma
\leq
\Vinf{\tf}\sigma(O\times\T)+\varepsilon\sigma((X\sm O)\times\T)\\
&\leq
\sigma(O\times\T)+\varepsilon\sigma(X\times\T)
=\sigma(O\times\T)+\varepsilon\V{\sigma}=\sigma(O\times\T)+\varepsilon.
\end{align*}
We obtain $1\leq\sigma(O\times\T)\leq\sigma(X\times\T)=\V{\sigma}=1$,
since $\varepsilon>0$ is arbitrary.
Therefore, $\sigma(O\times\T)=1$ for all open sets $O$ in $X$
with $x_0\in O$, and consequently, we observe that
$\sigma(\set{x_0}\times\T)=1$ by the regularity of the measure $\sigma$.
We derive from $\sigma(\set{x_0}\times\T)=1=\sigma(X\times\T)$ that
\[
z_0=
\d_{\eta_0}(\tf)=\int_{\set{x_0}\times\T}\tf\,d\sigma
=\int_{\set{x_0}\times\T}f(x)z\,d\sigma
=\int_{\set{x_0}\times\T}z\,d\sigma,
\]
which yields $\int_{\set{x_0}\times\T}(z_0-z)\,d\sigma=0$.
Taking $Z=\set{x_0}\times(\T\sm\set{z_0})$, we get
$\int_Z(1-\ov{z_0}z)\,d\sigma
=-\ov{z_0}\int_Z(z-z_0)\,d\sigma=0$.
Since $\sigma$ is a positive measure on $X\times\T$,
we have $\int_Z\Re(1-\ov{z_0}z)\,d\sigma=0$.
Having in mind that $\Re(1-\ov{z_0}z)>0$ on $Z$, we conclude $\sigma(Z)=0$.
We derive from $\sigma(\set{x_0}\times\T)=1$ and $\sigma(Z)=0$ that
$\sigma(\set{x_0}\times\set{z_0})=1$, which shows that
$\sigma$ is the Dirac measure concentrated at $\eta_0=(x_0,z_0)$.
\end{proof}

We denote by $\ext{\TB_1^*}$ the set of all extreme points of the closed unit ball of $\TB_1^*.$
Let $\chb$ be Choquet boundary for $\TB$.
The Arens-Kelley theorem (cf. \cite[Corollary~2.3.6]{fle1}) states that
$\ext{\TB_1^*}=\set{\la\d_\eta\in\TB_1^*:\la\in\T,\,\eta\in\chb}$. We shall next characterize $\chb$ to determine $\ext{\TB_1^*}$.

\begin{lem}\label{lem2.3}
The Choquet boundary, $\chb,$ of $\TB$ in $X\times \mathbb{T}$ is of the form $\chb=\MT$.
\end{lem}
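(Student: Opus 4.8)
The plan is to establish the two inclusions $\M\times\T\subseteq\chb$ and $\chb\subseteq\M\times\T$ separately, working throughout from the definition that $\eta\in\chb$ precisely when the restriction $\d_\eta|_{\TB}$ is an extreme point of the closed unit ball $\TB_1^*$. As a preliminary remark I would note that $\Vert\d_\eta|_{\TB}\Vert=1$ for every $\eta\in X\times\T$, since $\te\equiv 1$ with $\Vinf{\te}=1$ forces $1=|\d_\eta(\te)|\le\Vert\d_\eta|_{\TB}\Vert\le 1$.

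For the inclusion $\M\times\T\subseteq\chb$ I would lean on Lemma~\ref{lem2.2}. Fix $\eta_0=(x_0,z_0)\in\M\times\T$. By Lemma~\ref{lem2.2} the functional $\d_{\eta_0}|_{\TB}$ has a \emph{unique} norm-preserving extension to $C(X\times\T)^*$, namely the Dirac measure $\d_{\eta_0}$, which is an extreme point of the unit ball of $C(X\times\T)^*$. I then invoke the general principle that a norm-one functional on a closed subspace whose unique norm-preserving extension is extreme in the ambient dual ball is itself extreme: if $\d_{\eta_0}|_{\TB}=\tfrac12(\phi_1+\phi_2)$ with $\phi_1,\phi_2\in\TB_1^*$, choose norm-preserving extensions $\mu_1,\mu_2$ (so $\Vert\mu_j\Vert=\Vert\phi_j\Vert\le 1$); then $\tfrac12(\mu_1+\mu_2)$ is a norm-one extension of $\d_{\eta_0}|_{\TB}$, hence equals $\d_{\eta_0}$ by uniqueness, and extremality of $\d_{\eta_0}$ forces $\mu_1=\mu_2=\d_{\eta_0}$, whence $\phi_1=\phi_2$. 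Thus $\d_{\eta_0}|_{\TB}\in\ext{\TB_1^*}$ and $\eta_0\in\chb$.

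For the reverse inclusion I would argue by contraposition. Since $\chb\subseteq X\times\T$, every point of $\chb$ has the form $(x_0,z_0)$ with $z_0\in\T$, so it suffices to show $x_0\in\M$. Suppose instead $x_0\notin\M=\cha$, so $\d_{x_0}|_A$ is \emph{not} extreme in $A_1^*$; write $\d_{x_0}|_A=\tfrac12(\rho_1+\rho_2)$ with $\rho_1\ne\rho_2$ in $A_1^*$. Using that $\der\circ\INT=\id$ makes $\der$ surjective, define $\phi_j\in\TB^*$ by $\phi_j(\ta)=\con(a)+z_0\,\rho_j(\der(a))$, which is well defined because $U\colon a\mapsto\ta$ is a bijection of $B$ onto $\TB$. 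Hypothesis $(\ref{(5)})$ together with $\Vinf{\ta}=\VB{a}$ gives $|\phi_j(\ta)|\le|\con(a)|+\Vert\rho_j\Vert\,\Vinf{\der(a)}\le|\con(a)|+\Vinf{\der(a)}=\Vinf{\ta}$, so $\phi_j\in\TB_1^*$, while $\tfrac12(\phi_1+\phi_2)(\ta)=\con(a)+z_0\der(a)(x_0)=\ta(x_0,z_0)=\d_{(x_0,z_0)}(\ta)$. Choosing $g\in A$ with $\rho_1(g)\ne\rho_2(g)$ and $a=\INT(g)$ (so $\der(a)=g$) shows $\phi_1\ne\phi_2$. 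Hence $\d_{(x_0,z_0)}|_{\TB}$ is a nontrivial convex combination in $\TB_1^*$, contradicting $(x_0,z_0)\in\chb$.

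The step deserving the most care is the downward transfer of extremality in the first inclusion: extremality in the large space $C(X\times\T)^*$ does not, by itself, descend to a subspace, and it is exactly the \emph{uniqueness} of the Hahn--Banach extension furnished by Lemma~\ref{lem2.2} that rescues the argument. In the second inclusion the only delicate point is the bound $\Vert\phi_j\Vert\le 1$, which relies essentially on the additive form of the norm in hypothesis $(\ref{(5)})$. Conceptually both inclusions reflect the identification $B\cong A\oplus^{\ell_1}\C$, whose dual ball has extreme points of the product form $\ext(A_1^*)\times\T$; I would mention this as the organizing principle but carry out the proof through Lemma~\ref{lem2.2} and the direct norm estimate, as above.
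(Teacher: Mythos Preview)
Your proof is correct. The first inclusion is handled exactly as in the paper, via Lemma~\ref{lem2.2} together with the standard ``unique Hahn--Banach extension'' criterion for extremality, which you spell out in full.

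For the reverse inclusion your argument differs in presentation from the paper's. The paper invokes the identification $\TB\cong A\oplus^{\ell_1}\C$, dualises to $A^*\oplus^{\ell_\infty}\C$, and reads off that the extreme points of the dual ball must have their $A^*$-component in $\ext{A_1^*}$, then pins down the coordinates by evaluating at $\te$ and at $\widetilde{\INT(f)}$. You instead carry out the same idea by hand: given $x_0\notin\cha$, you pull back a nontrivial convex decomposition of $\d_{x_0}|_A$ through the formula $\phi_j(\ta)=\con(a)+z_0\,\rho_j(\der(a))$, using hypothesis $(\ref{(5)})$ to secure $\Vert\phi_j\Vert\le1$. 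This is a more elementary and self-contained route that avoids quoting the extreme-point structure of $\ell_\infty$-sums; the paper's version is more conceptual but relies on that external fact. Both arguments ultimately encode the same $\ell_1$-sum picture you mention at the end.
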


\begin{proof}
We first prove $\MT\subset\chb$. It is part of the folklore in functional analysis that if $\varphi \in \ext{E_1^*}$ for some Banach space $E$ and $F$ is a closed subspace of $E$ such that $\varphi|_{F}$ admits a unique Hahn-Banach extension to $E$, the functional $\varphi|_{F}$ is an extreme point of $F_1^*$.  Lemma \ref{lem2.2} then assures that $\MT\subset\chb$.

To see the converse implication, we observe that the Banach space $\tilde{B}$ is isometrically isomorphic to the $\ell_1$-direct sum, $A\oplus^{\ell_1} \mathbb{C}$ via the surjective linear isometry given by $\tilde{B} \ni \tilde{a} \mapsto (\mathcal{D}(a), \mathcal{C}(a)) \in  A\oplus^{\ell_1} \mathbb{C}$, and therefore $\tilde{B}^*$ is isometrically isomorphic to $A^*\oplus^{\ell_\infty} \mathbb{C}$ via the transposed surjective isometry. It is well known that $$\ext{\left(A^*\oplus^{\ell_\infty} \mathbb{C}\right)_1} = \left\{ (\phi,z) : \phi\in \ext{A_1^*}, \, z\in \mathbb{T} \right\}.$$ Fix an arbitrary $\eta_1 = (x_1,z_1)\in\chb\subseteq X\times \mathbb{T}$, and the corresponding  $\d_{\eta_1}|_{\tilde{B}}\in\ext{\TB_1^*}$. The previous identifications result, via the Arens--Kelley theorem, in the form $\d_{\eta_1}|_{\tilde{B}} (\tilde{a}) = \alpha \delta_x (\der(a)) + \beta\,\con(a),$ ($\forall \tilde{a}\in\tilde{B}$) for some $\alpha,\beta \in\mathbb{T}$ and $x\in \cha$. Having in mind that $\tilde{e}$ is the constant function on $X\times \mathbb{T}$ with $\der(e) =0$, we get $1 =\d_{\eta_1}|_{\tilde{B}} (\tilde{e}) = \beta$. Finally, for $\widetilde{\INT(f)} \in\tilde{B}$ with an arbitrary $f\in A$, we obtain $$z_1 f(x_1) = \widetilde{\INT(f)} (\eta_1) =\d_{\eta_1}|_{\tilde{B}} (\widetilde{\INT(f)}) =  \alpha f(x). $$ The arbitrariness of $ f\in A,$ the hypothesis affirming that $A$ is extremely C-regular, and the fact that $x\in \cha$ imply $x_1 =x\in \cha$ and $z_1 = \alpha$.
\end{proof}

A subtle combination of Eidelheit's or Hahn-Banach separation theorem, due to Tanaka \cite[Lemma 3.3]{tana2} and Hatori, Oi and Shindo Togashi \cite[Lemma~3.1]{hat2}, proves that each maximal convex subset $F$ of the unit sphere of a Banach space $E$ writes in the form 
$F=\xi^{-1}(1)\cap S_{E}$, where $\xi$ is an extreme point of the closed unit ball of $E^*$ and  $\xi^{-1}(1)=\set{a\in S_{E} :\xi(a)=1}$.
We can prove the following result by a quite similar argument as in \cite[Proof of Lemma~3.2]{hat2}. 
The proof is included here for completeness.

\begin{prop}\label{prop2.4}
Let $F$ be a subset of $\SB$.
Then $F$ is a maximal convex subset of $\SB$
if and only if there exist $\la\in\T$
and $\eta\in\M\times\T$ such that $F=\la V_\eta$.
\end{prop}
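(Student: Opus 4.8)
The plan is to combine the structural description of maximal convex subsets recalled just above (the result of Tanaka and Hatori--Oi--Shindo Togashi) with the identification of $\ext{\TB_1^*}$ furnished by the Arens--Kelley theorem and Lemma~\ref{lem2.3}, and then to secure maximality in the converse direction using the inclusion-forces-equality statement of Lemma~\ref{lem2.1}. In short: the cited result handles the form of maximal convex sets, Lemma~\ref{lem2.3} converts abstract extreme functionals into concrete point evaluations $\la\,\d_\eta|_{\TB}$, and Lemma~\ref{lem2.1} upgrades convexity to maximality.

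For the \emph{only if} direction I would start from a maximal convex subset $F$ of $\SB$ and invoke the cited theorem to write $F=\xi^{-1}(1)\cap\SB$ for some $\xi\in\ext{\TB_1^*}$. By the Arens--Kelley theorem together with Lemma~\ref{lem2.3}, which gives $\chb=\MT$, every such $\xi$ has the form $\xi=\la_0\,\d_\eta|_{\TB}$ for some $\la_0\in\T$ and $\eta\in\MT$. Substituting, $F=\set{\tf\in\SB:\la_0\,\tf(\eta)=1}$, and since $\la_0\in\T$ the defining condition rewrites as $\tf(\eta)=\ov{\la_0}$, so $F=\ov{\la_0}V_\eta$. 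Putting $\la=\ov{\la_0}$ yields $F=\la V_\eta$ of the required form. This direction is essentially bookkeeping once the two quoted results are invoked.

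For the \emph{if} direction I would prove that each $\la V_\eta$ is already maximal. First I would record that $\la V_\eta$ is a nonempty convex subset of $\SB$: it contains $\la\te$ because $\te\equiv 1$ on $X\times\T$, and for $\tf,\tg\in\la V_\eta$ and $t\in[0,1]$ the combination $t\tf+(1-t)\tg$ lies in $\TB$, has supremum norm at most $1$, and takes the value $\la$ of modulus one at $\eta$, so its norm equals $1$ and it again belongs to $\la V_\eta$. Then, by Zorn's lemma, I would enlarge $\la V_\eta$ to a maximal convex subset $F$ of $\SB$; by the \emph{only if} direction already established, $F=\mu V_{\eta'}$ for some $\mu\in\T$, $\eta'\in\MT$. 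The inclusion $\la V_\eta\subseteq\mu V_{\eta'}$ then triggers Lemma~\ref{lem2.1}, forcing $(\la,\eta)=(\mu,\eta')$, whence $\la V_\eta=F$ is maximal.

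The step carrying the genuine content is the maximality in the converse direction: convexity of $\la V_\eta$ is routine, but ruling out a proper convex enlargement is exactly where Lemma~\ref{lem2.1} is indispensable. I would also be careful at the one place where an index could silently flip, namely the passage from $\la_0$ to $\ov{\la_0}$ when rewriting $\la_0\,\tf(\eta)=1$ as $\tf(\eta)=\ov{\la_0}$; keeping this conjugation explicit ensures the bridge from the extreme functionals $\la_0\,\d_\eta|_{\TB}$ to the concrete sets $\la V_\eta$ is stated correctly.
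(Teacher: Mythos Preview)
Your proof is correct and follows essentially the same route as the paper: both directions invoke the Tanaka/Hatori--Oi--Shindo Togashi description $F=\xi^{-1}(1)\cap\SB$ together with the Arens--Kelley theorem and Lemma~\ref{lem2.3} to identify $\xi$ as $\la_0\d_\eta$, and the converse uses Zorn plus Lemma~\ref{lem2.1} exactly as you do. Your additional explicit verification that $\la V_\eta$ is nonempty and convex is a minor elaboration not spelled out in the paper, but the argument is otherwise the same.
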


\begin{proof}
Suppose that $F$ is a maximal convex subset of $\SB$.
As we noted above, there exists 
$\xi\in\ext{\TB_1^*}$ such that $F=\xi^{-1}(1)\cap\SB$ (cf. \cite[Lemma~3.1]{hat2}).
Since, by Lemma~\ref{lem2.3}, $\chb=\MT$ we can write $$\ext{\TB_1^*}=\left\{\la\d_\eta\in\TB_1^*:\la\in\T,\,\eta\in\MT\right\}.$$ This implies that $\xi=\la\d_\eta$ for some $\la\in\T$ and $\eta\in\MT$.
Then, by \eqref{V}, the maximal convex set $F$ is of the form
\[
F=(\la\d_\eta)^{-1}(1)\cap\SB=\set{\tf\in\SB:\la\tf(\eta)=1}=\ov{\la}V_\eta.
\]

Conversely, suppose that there exist $\la_1\in\T$ and $\eta_1\in\MT$ such that $F=\la_1V_{\eta_1}$.
We shall see that $F$ is a convex subset of $\SB$.
Zorn's lemma assures that there exists a maximal convex subset $K$ of $\SB$
such that $F\subset K$.
Then, by what we proved in the first part, $K=\la_2V_{\eta_2}$ for some $\la_2\in\T$ and $\eta_2\in\MT$.
This implies $\la_1V_{\eta_1}=F\subset K=\la_2V_{\eta_2}$, and thus
$(\la_1,\eta_1)=(\la_2,\eta_2)$ by Lemma~\ref{lem2.1}.
We derive $F=K$, and therefore, $F$ is a maximal convex subset of $\SB$.
\end{proof}

\section{Construction of maps $\al$ and $\phi$}

It is known that every surjective isometry between the unit spheres of two Banach spaces
preserves maximal convex subsets of the corresponding spheres (cf. \cite[Lemma~5.1]{che} and 
\cite[Lemma~3.5]{tana}).
Let $E_1$ and $E_2$ be Banach spaces and let $W\colon S_{E_1}\to S_{E_2}$ be
a surjective isometry between the unit spheres $S_{E_1}$ and $S_{E_2}$
of $E_1$ and $E_2$, respectively.
Mori \cite[Proposition~2.3]{mor} proved that $W(-F)=-W(F)$ for each maximal convex
set $F$ of $S_{E_1}$. The following lemma is a direct consequence of the just commented results by Cheng and Dong \cite{che},
Tanaka \cite{tana} and Mori \cite{mor}. 
We keep the notation from the previous section, that is, $\del$ is a surjective isometry on $(\SAo,\Vs{\cdot})$ and $T$ is the corresponding surjective isometry induced on $(\SB, \Vinf{\cdot})$.

\begin{lem}\label{lem3.1}
There exist surjective maps $\al\colon\T\times(\MT)\to\T$ and
$\phi\colon\T\times(\MT)\to\MT$ such that
\begin{align}
T(\la V_\eta)
&=
\al(\la,\eta)V_{\phi(\la,\eta)},
\label{lem3.1.1}\\
\al(-\la,\eta)
&=
-\al(\la,\eta)
\label{lem3.1.2}
\end{align}
and $\phi(-\la,\eta)=\phi(\la,\eta)$ for all $(\la,\eta)\in\T\times(\MT)$.
\end{lem}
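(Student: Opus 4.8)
The plan is to read off the maps $\al$ and $\phi$ directly from the action of $T$ on maximal convex subsets, using the parametrisation established in Proposition~\ref{prop2.4} together with its uniqueness clause from Lemma~\ref{lem2.1}, and then to extract the two symmetry relations from Mori's identity $T(-F)=-T(F)$.

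First I would invoke the cited results of Cheng--Dong \cite{che} and Tanaka \cite{tana}: since $T$ is a surjective isometry of $\SB$ onto itself, it carries maximal convex subsets to maximal convex subsets, and the same holds for $T^{-1}$; thus $T$ induces a bijection on the family of maximal convex subsets of $\SB$. By Proposition~\ref{prop2.4}, this family is exactly $\set{\la V_\eta:(\la,\eta)\in\T\times(\MT)}$, so for each $(\la,\eta)$ the set $T(\la V_\eta)$ is again of the form $\mu V_\zeta$. Lemma~\ref{lem2.1} guarantees that the pair $(\mu,\zeta)$ is uniquely determined, whence I may \emph{define} $\al(\la,\eta)=\mu$ and $\phi(\la,\eta)=\zeta$; this yields \eqref{lem3.1.1}. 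Surjectivity follows because $T$ is onto the family of maximal convex subsets: given any $\mu V_\zeta$, applying $T^{-1}$ produces some $\la V_\eta$ with $T(\la V_\eta)=\mu V_\zeta=\al(\la,\eta)V_{\phi(\la,\eta)}$, and uniqueness forces $\al(\la,\eta)=\mu$, $\phi(\la,\eta)=\zeta$, so both coordinate maps are surjective.

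For the two identities I would first record the elementary fact that $-(\la V_\eta)=(-\la)V_\eta$, which is immediate from the description \eqref{V}, since negating a function of $\SB$ turns the condition $\tf(\eta)=\la$ into $\tf(\eta)=-\la$. Applying Mori's result \cite[Proposition~2.3]{mor}, namely $T(-F)=-T(F)$ for every maximal convex set $F$, to $F=\la V_\eta$ gives
\[
\al(-\la,\eta)V_{\phi(-\la,\eta)}=T\big((-\la)V_\eta\big)=-T(\la V_\eta)=\big(-\al(\la,\eta)\big)V_{\phi(\la,\eta)}.
\]
Comparing the two ends through the uniqueness of the parametrisation (Lemma~\ref{lem2.1}) yields simultaneously $\al(-\la,\eta)=-\al(\la,\eta)$, that is \eqref{lem3.1.2}, together with $\phi(-\la,\eta)=\phi(\la,\eta)$.

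Since all the heavy lifting has been carried out in the preparatory results, I do not expect a genuine obstacle: the proof is an assembly of Proposition~\ref{prop2.4}, Lemma~\ref{lem2.1} and Mori's identity. The only points requiring a little care are to confirm that $T(\la V_\eta)$ is genuinely a \emph{maximal} convex subset—so that Proposition~\ref{prop2.4} applies—which is precisely the preservation result of Cheng--Dong and Tanaka, and to ensure that the well-definedness of $\al$ and $\phi$ rests on the uniqueness part of Lemma~\ref{lem2.1} rather than on mere existence of the representation.
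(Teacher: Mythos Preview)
Your proposal is correct and follows essentially the same approach as the paper: define $\al$ and $\phi$ via Proposition~\ref{prop2.4} and Lemma~\ref{lem2.1}, obtain surjectivity from the corresponding construction for $T^{-1}$, and derive the symmetry relations from Mori's identity $T(-F)=-T(F)$ together with the uniqueness in Lemma~\ref{lem2.1}. The only cosmetic difference is that the paper explicitly names the maps $\be,\psi$ associated with $T^{-1}$ before reading off surjectivity, whereas you phrase this as $T$ inducing a bijection on maximal convex subsets; the content is identical.
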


\begin{proof}
Fix arbitrary $(\la,\eta)\in\T\times(\MT)$.
Proposition~\ref{prop2.4} shows that $\la V_\eta$ is a maximal convex subset of $\SB$.
Since $T$ is a surjective isometry on $\SB$, it preserves maximal convex subsets
of $\SB$ by \cite[Lemma~5.1]{che} and \cite[Lemma~3.5]{tana}.
This implies that $T(\la V_\eta)$ is a maximal convex subset of $\SB$.
Applying Proposition~\ref{prop2.4} again, we can write
$T(\la V_\eta)=\mu_0V_{\zeta_0}$ for some $(\mu_0,\zeta_0)\in\T\times(\MT)$.
We shall prove the uniqueness of such element $(\mu_0,\zeta_0)$.
Assume that $T(\la V_\eta)=\mu_1V_{\zeta_1}$ for another element
$(\mu_1,\zeta_1)\in\T\times(\MT)$.
Then we have $\mu_0V_{\zeta_0}=T(\la V_\eta)=\mu_1V_{\zeta_1}$.
We derive from Lemma~\ref{lem2.1} that $(\mu_0,\zeta_0)=(\mu_1,\zeta_1)$,
which proves the uniqueness of $(\mu_0,\zeta_0)$ with
$T(\la V_\eta)=\mu_0V_{\zeta_0}$.
Since $(\la,\eta)\in\T\times(\MT)$ is arbitrary, the correspondences
$\al(\la,\eta)=\mu_0$ and $\phi(\la,\eta)=\zeta_0$ give two well defined maps
$\al\colon\T\times(\MT)\to\T$ and $\phi\colon\T\times(\MT)\to\M\times\T$
satisfying $T(\la V_\eta)=\al(\la,\eta)V_{\phi(\la,\eta)}$
for all $(\la,\eta)\in\T\times(\MT)$.

Since $\la V_\eta$ is a maximal convex subset of $\SB$, \cite[Proposition~2.3]{mor}
shows that $T(-\la V_\eta)=-T(\la V_\eta)$.
Applying \eqref{lem3.1.1}, we can rewrite the last equality as
$\al(-\la,\eta)V_{\phi(-\la,\eta)}=-\al(\la,\eta)V_{\phi(\la,\eta)}$.
We obtain $\al(-\la,\eta)=-\al(\la,\eta)$ and $\phi(-\la,\eta)=\phi(\la,\eta)$
by Lemma~\ref{lem2.1}.
Having in mind that $T^{-1}$ is a surjective isometry on $\SB$,
we derive the existence of two maps $\be\colon\T\times(\MT)\to\T$ and
$\psi\colon\T\times(\MT)\to\MT$ satisfying
$T^{-1}(\mu V_\zeta)=\be(\mu,\zeta)V_{\psi(\mu,\zeta)}$
for all $(\mu,\zeta)\in\T\times(\MT)$.
By combining the last equality with \eqref{lem3.1.1},
for each $(\mu,\zeta)\in\T\times(\MT)$ we get
\begin{align*}
\mu V_\zeta
&=
T(T^{-1}(\mu V_\zeta))
=T(\be(\mu,\zeta)V_{\psi(\mu,\zeta)})\\
&=
\al(\be(\mu,\zeta),\psi(\mu,\zeta))V_{\phi(\be(\mu,\zeta),\psi(\mu,\zeta))}.
\end{align*}
Another application of Lemma~\ref{lem2.1}  shows that
$\mu=\al(\be(\mu,\zeta),\psi(\mu,\zeta))$
and $\zeta=\phi(\be(\mu,\zeta),\psi(\mu,\zeta))$, which ensures that
both $\al$ and $\phi$ are surjective.
\end{proof}

Let $\phi\colon\T\times(\MT)\to\MT$ be the map defined in Lemma~\ref{lem3.1},
and let $\pi_j$ be the natural projection defined on $\MT$ to the $j$-th coordinate
for $j=1,2$. We define $\phi_j=\pi_j\circ\phi$ for $j=1,2$;
then $\phio\colon\T\times(\MT)\to\M$ and $\phit\colon\T\times(\MT)\to\T$.
We can therefore write
\[
\phi(\la,\eta)=(\phio(\la,\eta),\phit(\la,\eta))
\qq
((\la,\eta)\in\T\times(\MT)).
\]
It follows from Lemma~\ref{lem3.1} that $\phio$ and $\phit$ are
both surjective maps with
\begin{equation}\label{lem2.6.1}
\phi_j(-\la,\eta)=\phi_j(\la,\eta)
\qq((\la,\eta)\in\T\times(\M\times\T),\ j=1,2).
\end{equation}

In the rest of this section, we will analyse the properties of the maps $\al$ and $\phi$
defined on $\T\times(\MT)$; roughly speaking, we shall try to express $\al(\la,x,z)$ and
$\phi_j(\la,x,z)$ as single variable maps with $\la,z\in\T$ and $x\in\cha$.

\begin{lem}\label{lem3.2}
The function $\phio$ satisfies $\phio(\la,\eta)=\phio(1,\eta)$
for all $\la\in\T$ and $\eta\in\MT$.
\end{lem}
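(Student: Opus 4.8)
The plan is to exploit that the surjective isometry $T$ preserves the gap distance between maximal convex subsets of $\SB$, and to compute this distance explicitly in two extreme configurations. For subsets $F_1,F_2$ of $\SB$ write $\rho(F_1,F_2)=\inf\{\Vinf{\ta-\tb}:\ta\in F_1,\,\tb\in F_2\}$. Since $T$ is a surjective isometry of $\SB$ that carries maximal convex sets onto maximal convex sets with $T(\la V_\eta)=\al(\la,\eta)V_{\phi(\la,\eta)}$ (Lemma~\ref{lem3.1}), it preserves this quantity: $\rho(T(F_1),T(F_2))=\rho(F_1,F_2)$.

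First I would establish two elementary distance facts. (a) For a fixed $\eta\in\MT$ and $\la,\la'\in\T$ one has $\rho(\la V_\eta,\la'V_\eta)=|\la-\la'|$: the lower bound comes from evaluating at $\eta$, since $\Vinf{\ta-\tb}\ge|\ta(\eta)-\tb(\eta)|=|\la-\la'|$, while the upper bound is attained by the constant functions $\la\te,\la'\te$, which lie in $\la V_\eta,\la'V_\eta$ because $\te\equiv1$. (b) If $\eta_1=(x_1,z_1)$ and $\eta_2=(x_2,z_2)$ lie in $\MT$ with $x_1\ne x_2$, then $\rho(\mu_1 V_{\eta_1},\mu_2 V_{\eta_2})\le1$ for all $\mu_1,\mu_2\in\T$. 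To see (b), fix $\varepsilon>0$ and use the extreme C-regularity of $A$ to choose $f_1,f_2\in\SA$ with $f_j(x_j)=1$ and $|f_j|<\varepsilon$ off disjoint open neighbourhoods $O_1,O_2$ of $x_1,x_2$. Then $\ta_j:=\widetilde{\INT(\mu_j\ov{z_j}f_j)}\in\SB$ satisfies $\ta_j(x,z)=\mu_j\ov{z_j}f_j(x)z$ by hypotheses $(\ref{(1)})$ and $(\ref{(4)})$, whence $\ta_j(\eta_j)=\mu_j$, i.e. $\ta_j\in\mu_j V_{\eta_j}$; and $\Vinf{\ta_1-\ta_2}=\Vinf{\mu_1\ov{z_1}f_1-\mu_2\ov{z_2}f_2}\le1+\varepsilon$, since each summand is $\varepsilon$-small off its own neighbourhood and the two neighbourhoods are disjoint.

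Combining (a), (b) and the $T$-invariance of $\rho$ yields a threshold dichotomy. Indeed, whenever $|\la-\la'|>1$ we obtain $\rho\big(\al(\la,\eta)V_{\phi(\la,\eta)},\al(\la',\eta)V_{\phi(\la',\eta)}\big)=\rho(\la V_\eta,\la'V_\eta)=|\la-\la'|>1$, so by the contrapositive of (b) the two image sets cannot have distinct first coordinates; that is,
\[
|\la-\la'|>1\ \Longrightarrow\ \phio(\la,\eta)=\phio(\la',\eta).
\]

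Finally I would remove the constraint $|\la-\la'|>1$ by a covering argument on the unit circle. The chord condition $|w-\la|>1$ describes an arc of $\T$ of angular length exceeding $240^\circ$, so for any $\la,\la'\in\T$ the two arcs $\{w\in\T:|w-\la|>1\}$ and $\{w\in\T:|w-\la'|>1\}$ must overlap; choosing $w_0$ in their intersection, the displayed implication gives $\phio(\la,\eta)=\phio(w_0,\eta)=\phio(\la',\eta)$. Taking $\la'=1$ then yields $\phio(\la,\eta)=\phio(1,\eta)$ for all $\la\in\T$ and $\eta\in\MT$, as required. I expect the only genuine obstacle to be the bound (b): one must manufacture, using nothing beyond extreme C-regularity, honest members of the two maximal convex sets lying within distance $1$, and it is precisely the localization provided by the peaking functions of $A$ that forces the two competing summands to be essentially disjointly supported.
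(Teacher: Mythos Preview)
Your argument is correct and follows the same core idea as the paper's proof: both manufacture, via $\INT$ applied to peaking functions supplied by extreme C-regularity, elements of the two image faces that sit within distance close to $1$ whenever their $\cha$-coordinates differ, and then compare this against the gap $|\la-\la'|$ preserved by $T$. The only difference is the endgame: the paper fixes $\varepsilon=1/4$, obtains the threshold $5/4$, and disposes of the residual half-circle $\Re\la>0$ via the symmetry $\phio(-\la,\eta)=\phio(\la,\eta)$ from \eqref{lem2.6.1}, whereas you let $\varepsilon\to0$ to get the sharp threshold $1$ and close with an overlap argument on $\T$ (minor correction: the open arc $\{w\in\T:|w-\la|>1\}$ has angular measure exactly $240^\circ$, not strictly more, but $240^\circ+240^\circ>360^\circ$ still forces any two such arcs to meet, so your conclusion stands).
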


\begin{proof}
Choose $\la\in\T$ and $\eta\in\M\times\T$ arbitrarily.
Assume that $\phio(\la,\eta)\neq\phio(1,\eta)$ to lead to a contradiction.
We can therefore find disjoint open sets $O_\la$ and $O_1$ in $X$ so that
$\phio(\nu,\eta)\in O_\nu$ for $\nu=\la,1$.
Fix $\nu\in\set{\la,1}$. Then, by the extreme C-regurality of $A$, there exists $f_\nu\in\SA$ such that
$f_\nu(\phio(\nu,\eta))=1$ and $|f_\nu|<1/4$ on $X\sm O_\nu$.
We denote $\INT(\al(\nu,\eta)\ov{\phit(\nu,\eta)}f_\nu)\in S_{{B}}$ by $a_\nu$.
By \eqref{tilde} combined with hypotheses $(\ref{(1)})$ and $(\ref{(4)})$,
we have $\t{a_\nu}(x,z)=\al(\nu,\eta)\ov{\phit(\nu,\eta)}f_\nu(x)z$
for all $(x,z)\in X\times\T$.
Since $|f_\nu|<1/4$ on $X\sm O_\nu$ with $O_\la\cap O_1=\emptyset$, we get
\[
|\t{a_\la}(x,z)-\t{a_1}(x,z)|\leq|f_\la(x)|+|f_1(x)|<1+\fr{1}{4}=\fr{5}{4}
\]
for all $(x,z)\in X\times\T$, which shows $\Vinf{\t{a_\la}-\t{a_1}}<5/4$.

Now by applying that $\phi(\nu,\eta)=(\phio(\nu,\eta),\phit(\nu,\eta))$,
we see that $\t{a_\nu}$ lies in $\al(\nu,\eta)V_{\phi(\nu,\eta)}=T(\nu V_\eta)$
by \eqref{lem3.1.1}.
Hence $T^{-1}(\t{a_\nu})\in\nu V_\eta$, which implies
$T^{-1}(\t{a_\la})(\eta)=\la$ and $T^{-1}(\t{a_1})(\eta)=1$.
First, let us consider the case in which $\Re\la\leq0$.
Then we obtain
\begin{align*}
\sqrt{2}
&\leq
|\la-1|=|T^{-1}(\t{a_\la})(\eta)-T^{-1}(\t{a_1})(\eta)|
\leq\Vinf{T^{-1}(\t{a_\la})-T^{-1}(\t{a_1})}\\
&=
\Vinf{\t{a_\la}-\t{a_1}}<\fr{5}{4},
\end{align*}
where we have used that $T$ is a surjective isometry on $(\SB, \Vinf{\cdot})$.
We arrive at a contradiction in this case, and hence $\phio(\la,\eta)=\phio(1,\eta)$
provided that $\Re\la\leq0$.
We next consider the case in which $\Re\la>0$, and then $\Re(-\la)<0$.
We derive from the fact proved above that $\phio(-\la,\eta)=\phio(1,\eta)$.
Since $\phio(-\la,\eta)=\phio(\la,\eta)$ by \eqref{lem2.6.1}, we conclude that
$\phio(\la,\eta)=\phio(1,\eta)$ even if $\Re\la>0$.
\end{proof}

By Lemma~\ref{lem3.2}, we may, and do, write $\phio(\la,\eta)=\phio(\eta)$
for each $\la\in\T$ and $\eta\in\MT$.
We believe this will cause no confusion.
Now, we shall describe the precise form of the mapping $\al(\la,\eta)$ on $\T$, witnessing that it is an isometry on $\T$ for each fixed $\eta\in\MT$.

\begin{lem}\label{lem3.3}
There exists a map $\e\colon\MT\to\set{\pm1}$ such that
\[
\al(\la,\eta)=\la^{\e(\eta)}\al(1,\eta)
\]
 for all $\la\in\T$ and $\eta\in\MT$.
\end{lem}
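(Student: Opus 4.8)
The plan is to show that, for each fixed $\eta\in\MT$, the single--variable map $g_\eta\colon\T\to\T$ given by $g_\eta(\la)=\al(\la,\eta)$ is a \emph{surjective isometry} of the circle $\T$ (with the metric inherited from $\C$), and then to invoke the classification of such isometries. Every surjective isometry of $\T$ extends to an affine isometry of $\R^2$ preserving $\T$, which therefore fixes the centre $0$ and lies in $O(2)$; hence it has the form $\la\mapsto a\la$ or $\la\mapsto a\ov{\la}$ for some $a\in\T$. Since $\ov{\la}=\la^{-1}$ on $\T$ and $g_\eta(1)=\al(1,\eta)$, this immediately yields $g_\eta(\la)=\la^{\e(\eta)}\al(1,\eta)$ with $\e(\eta)\in\set{\pm1}$, which is exactly the assertion. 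So the whole task reduces to proving that $g_\eta$ is a surjective isometry.

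The key step is a distance estimate. For subsets $F,G\subseteq\SB$ write $\operatorname{dist}(F,G)=\inf\set{\Vinf{h-k}:h\in F,\ k\in G}$. First I would compute $\operatorname{dist}(\la_1 V_\eta,\la_2 V_\eta)=|\la_1-\la_2|$ for a common $\eta$: the lower bound is immediate, since any $h\in\la_1 V_\eta$ and $k\in\la_2 V_\eta$ satisfy $\Vinf{h-k}\ge|h(\eta)-k(\eta)|=|\la_1-\la_2|$, while the upper bound follows by testing with the constant functions $\la_1\te,\la_2\te$, which lie in $\la_1 V_\eta,\la_2 V_\eta$ respectively (recall $\te\equiv1$) and satisfy $\Vinf{\la_1\te-\la_2\te}=|\la_1-\la_2|$. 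As $T$ is a surjective isometry of $\SB$ it preserves this set--distance, so by \eqref{lem3.1.1},
\[
|\la_1-\la_2|=\operatorname{dist}(\la_1 V_\eta,\la_2 V_\eta)=\operatorname{dist}\big(\al(\la_1,\eta)V_{\phi(\la_1,\eta)},\,\al(\la_2,\eta)V_{\phi(\la_2,\eta)}\big).
\]
Testing the right--hand side again with the constant functions $\al(\la_j,\eta)\te$ gives $\operatorname{dist}(\cdots)\le|\al(\la_1,\eta)-\al(\la_2,\eta)|$, whence
\[
|\la_1-\la_2|\le|\al(\la_1,\eta)-\al(\la_2,\eta)|\qquad(\la_1,\la_2\in\T).
\]
The decisive point is that the constant functions belong to \emph{every} $\la V_\eta$, so this computation never sees the second coordinate $\phit(\la,\eta)$; this is what lets me bypass the genuinely awkward $\la$--dependence of $\phit$.

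It remains to upgrade this expansiveness to isometry and surjectivity of $g_\eta$. Here I would use the elementary fact that any distance--nondecreasing self--map of a compact metric space is automatically a surjective isometry (if it missed a point $p$, compactness of the image would force a recurrent orbit, contradicting expansiveness; and then the inverse is a surjective $1$--Lipschitz self--map, hence an isometry). Applied to the expansive map $g_\eta$ of the compact space $\T$, this gives at once that $g_\eta$ is bijective and $|g_\eta(\la_1)-g_\eta(\la_2)|=|\la_1-\la_2|$ for all $\la_1,\la_2$. The classification above then applies, and setting $\e(\eta)=1$ or $-1$ according to whether $g_\eta$ is a rotation or a reflection defines a map $\e\colon\MT\to\set{\pm1}$ with $\al(\la,\eta)=\la^{\e(\eta)}\al(1,\eta)$ for all $\la\in\T$.

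The main obstacle is precisely the reverse inequality $|\al(\la_1,\eta)-\al(\la_2,\eta)|\le|\la_1-\la_2|$. The naive attempt is to rerun the distance argument for $T^{-1}$, but this fails: although $\phi(\la_1,\eta)$ and $\phi(\la_2,\eta)$ share their first coordinate by Lemma~\ref{lem3.2}, they need not share the second, so one cannot evaluate at a single common point to produce the matching lower bound. The resolution is to sidestep the reverse inequality entirely and to harvest it for free from the compactness of $\T$ via the distance--nondecreasing--self--map lemma; verifying (or citing) that lemma is the only non--routine ingredient.
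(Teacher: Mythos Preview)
Your proof is correct. Both you and the paper begin with the same one--sided inequality
\[
|\la-\mu|\le|\al(\la,\eta)-\al(\mu,\eta)|\qquad(\la,\mu\in\T),
\]
obtained by exactly the trick you describe: the constant functions $\al(\nu,\eta)\te$ lie in $T(\nu V_\eta)$ regardless of $\phit(\nu,\eta)$, so the set--distance (equivalently, evaluating $T^{-1}$ at $\eta$) yields the bound. From here the two arguments diverge. The paper never proves the reverse inequality either; instead it plugs in the four test values $\mu=\pm1,\pm i$ and observes that the resulting constraints $|\al(\la,\eta)\ov{\al(1,\eta)}\mp1|\ge|\la\mp1|$ and $|\al(\la,\eta)\ov{\al(1,\eta)}\mp i^{\e(\eta)}|\ge|\la\mp i|$ force $\al(\la,\eta)\ov{\al(1,\eta)}\in\{\la,\ov{\la}\}\cap\{\e(\eta)\la,-\e(\eta)\ov{\la}\}$, which pins down the answer for $\la\ne\pm1$. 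Your route upgrades expansiveness to a surjective isometry of $\T$ via the compactness lemma (every distance--nondecreasing self--map of a compact metric space is a surjective isometry) and then invokes the classification of isometries of $\T$ as rotations or reflections. Your argument is cleaner and more conceptual, and it explains \emph{why} the four--point check works; the paper's argument is more self--contained, needing no external facts beyond elementary circle geometry. Either way, the property $\al(-\la,\eta)=-\al(\la,\eta)$ from \eqref{lem3.1.2} is already a special case of the conclusion and is not needed separately.
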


\begin{proof}
We first prove that
\begin{equation}\label{lem3.3.1}
|\la-\mu|\leq|\al(\la,\eta)\ov{\al(\mu,\eta)}-1|
\end{equation}
for all $\la,\mu\in\T$ and $\eta\in\MT$.
Fix arbitrary $\la,\mu\in\T$ and $\eta\in\MT$.
Setting $a_\nu=\al(\nu,\eta)e$ for $\nu=\la,\mu$,
we observe that $\t{a_\nu}\in\al(\nu,\eta)V_{\phi(\nu,\eta)}$, 
since $\te=1$ on $X\times\T$.
We thus obtain $\t{a_\nu}\in T(\nu V_\eta)$ by \eqref{lem3.1.1},
and in particular $T^{-1}(\t{a_\nu})(\eta)=\nu$ for $\nu=\la,\mu$.
Having in mind that $T$ is a surjective isometry and $\Vinf{\te}=1$,
we get
\begin{align*}
|\la-\mu|
&=
|T^{-1}(\t{a_\la})(\eta)-T^{-1}(\t{a_\mu})(\eta)|
\leq\Vinf{T^{-1}(\t{a_\la})-T^{-1}(\t{a_\mu})}\\
&=
\Vinf{\t{a_\la}-\t{a_\mu}}
=|\al(\la,\eta)-\al(\mu,\eta)|\,\Vinf{\te}
=|\al(\la,\eta)\ov{\al(\mu,\eta)}-1|,
\end{align*}
as claimed.

Taking $\mu=\pm1$ in \eqref{lem3.3.1}, we have
\[
|\la-1|\leq|\al(\la,\eta)\ov{\al(1,\eta)}-1|
\q\mbox{and}\q
|\la+1|\leq|\al(\la,\eta)\ov{\al(1,\eta)}+1|,
\]
where we have used \eqref{lem3.1.2}.
It follows from the last two inequalities that
\[
\al(\la,\eta)\ov{\al(1,\eta)}\in\set{\la,\ov{\la}},
\]
since $\al(\la,\eta)\in\T$;
in particular, if we consider the case $\la=i$, we get
$\al(i,\eta)\ov{\al(1,\eta)}\in\set{\pm i}$.
This assures the existence of $\e(\eta)\in\set{\pm1}$ such that
$\al(i,\eta)\ov{\al(1,\eta)}=\e(\eta)i$, and hence
$\al(i,\eta)=i\e(\eta)\al(1,\eta)$.
Since $\eta\in\MT$ is arbitrary, we have a well defined map $\e\colon\MT\to\set{\pm1}$.

Now we enter $\mu=i$ into \eqref{lem3.3.1} to obtain
\begin{align*}
|\la-i|
&\leq
|\al(\la,\eta)\ov{\al(i,\eta)}-1|
=|\al(\la,\eta)\,\ov{i\e(\eta)\al(1,\eta)}-1|\\
&=
|\al(\la,\eta)\e(\eta)\ov{\al(1,\eta)}-i|.
\end{align*}
Since $\al(-i,\eta)=-\al(i,\eta)$ by \eqref{lem3.1.2},
the same argument, applied to $\mu=-i$,
shows that $|\la+i|\leq|\al(\la,\eta)\e(\eta)\ov{\al(1,\eta)}+i|$.
These two inequalities ensure that
$\al(\la,\eta)\e(\eta)\ov{\al(1,\eta)}\in\set{\la,-\ov{\la}}$, and thus
\[
\al(\la,\eta)\ov{\al(1,\eta)}\in\set{\e(\eta)\la,-\e(\eta)\ov{\la}}.
\]
Therefore, we obtain
\[
\al(\la,\eta)\ov{\al(1,\eta)}\in\set{\la,\ov{\la}}\cap\set{\e(\eta)\la,-\e(\eta)\ov{\la}}.
\]
If $\la=\pm1$, then the equality $\al(\la,\eta)=\la^{\e(\eta)}\al(1,\eta)$ is true
by \eqref{lem3.1.2} with $\e(\eta)\in\set{\pm1}$.
We shall consider the case in which $\la\in\T\sm\set{\pm1}$.
First we consider the case when $\e(\eta)=1$, and hence
$\al(\la,\eta)\ov{\al(1,\eta)}\in\set{\la,\ov{\la}}\cap\set{\la,-\ov{\la}}$.
Since $\la\neq\pm1$, we must have $\al(\la,\eta)\ov{\al(1,\eta)}=\la$,
which shows that $\al(\la,\eta)=\la\al(1,\eta)$ provided that $\e(\eta)=1$.
We next consider the case when $\e(\eta)=-1$.
Then we have $\al(\la,\eta)\ov{\al(1,\eta)}\in\set{\la,\ov{\la}}\cap\set{-\la,\ov{\la}}$,
which implies $\al(\la,\eta)\ov{\al(1,\eta)}=\ov{\la}$, since $\la\neq\pm1$.
Consequently, we obtain $\al(\la,\eta)=\ov{\la}\al(1,\eta)$ provided that
$\e(\eta)=-1$.
We thus conclude that $\al(\la,\eta)=\la^{\e(\eta)}\al(1,\eta)$.
\end{proof}

For simplicity of notation, for each $\eta \in \MT$, we write $\al(\eta)$ instead of $\al(1, \eta)$. 
Applying Lemmata~\ref{lem3.2} and \ref{lem3.3},
equality \eqref{lem3.1.1} is rewritten as
\begin{equation}\label{lem3.3.2}
T(\la V_\eta)=\la^{\e(\eta)}\al(\eta)V_{(\phio(\eta),\phit(\la,\eta))}
\end{equation}
for all $\la\in\T$ and $\eta\in\MT$.

\begin{prop}\label{prop3.4}
If $\la\in\T$ and $(x,z)\in\M\times\T$, then
$\ov{\la}\der(a)(x)z=\Vinf{\der(a)}$
for all $\ta\in\la V_{(x,z)}$.
\end{prop}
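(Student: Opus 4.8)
The plan is to unwind the definitions and reduce everything to the equality case of the triangle inequality in $\C$. First I would unpack the hypothesis $\ta\in\la V_{(x,z)}$. By definition \eqref{V}, this means both that $\ta\in\SB$ and that $\ta(x,z)=\la$. Since $U$ is an isometry with $U(\SAo)=\SB$, membership in $\SB$ gives $\VB{a}=\Vinf{\ta}=1$, which by hypothesis $(\ref{(5)})$ translates into the scalar identity
\[
|\con(a)|+\Vinf{\der(a)}=1.
\]
On the other hand, evaluating \eqref{tilde} at $(x,z)$ and using $\ta(x,z)=\la$ yields $\con(a)+\der(a)(x)z=\la$.

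Next I would multiply the latter equation by $\ov{\la}$ to normalise the right-hand side to $1$, writing $p=\ov{\la}\con(a)$ and $q=\ov{\la}\der(a)(x)z$, so that $p+q=1$, $|p|=|\con(a)|$, and $|q|=|\der(a)(x)|\le\Vinf{\der(a)}$. Chaining the triangle inequality with the two displayed scalar facts gives
\[
1=|p+q|\le|p|+|q|\le|\con(a)|+\Vinf{\der(a)}=1,
\]
so every inequality is in fact an equality. From $|p|+|q|=|\con(a)|+\Vinf{\der(a)}$ together with $|p|=|\con(a)|$ I would extract $|q|=\Vinf{\der(a)}$, i.e.\ $|\der(a)(x)|=\Vinf{\der(a)}$. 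From the equality $|p+q|=|p|+|q|$ in the triangle inequality, combined with the fact that the sum $p+q=1$ is a positive real number, I would conclude that $p$ and $q$ are themselves non-negative reals (they must point in the same direction as their sum). Hence $q=|q|=\Vinf{\der(a)}$, which is exactly the asserted identity $\ov{\la}\der(a)(x)z=\Vinf{\der(a)}$.

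I do not anticipate a serious obstacle here: the statement is essentially a rigidity consequence of the $\ell_1$-type norm formula $(\ref{(5)})$, and the only point requiring a moment of care is the equality case of the triangle inequality, namely arguing that $p+q=1\in\R_{\ge 0}$ forces both summands to be non-negative reals rather than merely collinear. This follows because the only way two complex numbers with a real positive sum can satisfy $|p+q|=|p|+|q|$ is for both to lie on the non-negative real axis.
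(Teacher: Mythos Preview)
Your proof is correct and follows essentially the same approach as the paper: both unpack $\ta\in\la V_{(x,z)}$ to obtain $\con(a)+\der(a)(x)z=\la$ together with $|\con(a)|+\Vinf{\der(a)}=1$, then multiply by $\ov{\la}$ and exploit the equality case of the triangle inequality. The only cosmetic difference is that the paper splits into the cases $\der(a)(x)=0$ and $\der(a)(x)\neq0$, whereas you argue directly that $p+q=1$ with $|p+q|=|p|+|q|$ forces $p,q\in\R_{\ge0}$; your treatment is slightly more streamlined but not substantively different.
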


\begin{proof}
First we note that $\la V_{(x,z)}$ is a maximal convex subset
of $\SB$ by Proposition~\ref{prop2.4}.
Let us fix $\ta\in\la V_{(x,z)}$.
By applying \eqref{tilde} and \eqref{V}, we get
$\con(a)+\der(a)(x)z=\ta (x,z)=\la$.
We deduce from the last equality and \eqref{norm} that
\begin{align*}
1
&=
\ov{\la}\con(a)+\ov{\la}\der(a)(x)z
\leq|\con(a)|+|\der(a)(x)|\\
&\leq|\con(a)|+\Vinf{\der(a)}
=\VB{a}=\Vinf{\ta}=1.
\end{align*}
If $\der(a)(x)=0$, then $\ov{\la}\con(a)=1$ and
$\Vinf{\der(a)}=0=\der(a)(x)$ by the last inequalities.
We shall consider the case in which $\der(a)(x)\neq0$.
The above inequalities show that
$|\ov{\la}\con(a)+\ov{\la}\der(a)(x)z|=|\con(a)|+|\der(a)(x)|$,
which guarantees the existence of $t\geq0$ with
$\ov{\la}\con(a)=t\ov{\la}\der(a)(x)z$.
It follows from the equality $\ov{\la}\con(a)+\ov{\la}\der(a)(x)z=1$ that
$\ov{\la}\der(a)(x)z=1/(1+t)$, and consequently, $\ov{\la}\con(a)=t/(1+t)$.
This implies, in particular, that $\ov{\la}\con(a)=|\con(a)|$.
We infer from the above inequalities that
$\ov{\la}\con(a)+\ov{\la}\der(a)(x)z=|\con(a)|+\Vinf{\der(a)}$, and therefore
we conclude $\ov{\la}\der(a)(x)z=\Vinf{\der(a)}$, as desired.
\end{proof}

Now we introduce the Hausdorff distance between maximal convex subsets in $\SB$, which is used in \cite{hat}, \cite{hat3} and \cite{mor} to solve Tingley's problem in some concrete cases.

\begin{defn}\label{defn1}
	Let $F_1,F_2$ be two maximal convex subsets of $\SB$.
	The Hausdorff distance between $F_1$ and $F_2$, $\dh(F_1,F_2)$, is defined by
	\begin{equation}\label{defn1.1}
		\dh(F_1,F_2)=
		\max\left\{\sup_{\ta\in F_1}d(\ta,F_2),
		\sup_{\tb\in F_2}d(F_1,\tb)\right\},
	\end{equation}
	where $d(\ta,F_2)=\inf_{\t{c}\in F_2}\Vinf{\ta-\t{c}}$ and
	$d(F_1,\tb)=\inf_{\t{c}\in F_1}\Vinf{\t{c}-\tb}$.
\end{defn}

Here we emphasize that the surjective isometry $T$ on $\SB$ preserves maximal convex subsets (cf. \cite[Lemma~3.5]{tana}) and the Hausdorff distance between any two maximal convex subsets of $\SB$.
To be more explicit, we have
\begin{equation}\label{defn1.2}
	\dh(T(F_1),T(F_2))=\dh(F_1,F_2)
\end{equation}
for all maximal convex subsets $F_1,F_2$ of $\SB$.

In our next lemma we compute the Hausdorff distance between
particular maximal convex subsets
of $\SB$.

\begin{lem}\label{lem3.5}
Let $k\in\set{\pm1}$, $x\in\cha$ and $z\in\T$.
Then we have
\[
\sup_{\ta\in kV_{(x,k)}}d(\ta,kV_{(x,z)})
=\sup_{\tb\in kV_{(x,z)}}d(kV_{(x,k)},\tb)=|k-z|.
\]
In particular, we have $\dh(kV_{(x,k)},kV_{(x,z)})=|k-z|$.
\end{lem}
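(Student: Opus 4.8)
The plan is to translate the supremum norm on $\TB$ back to the norm of $\Ao$ through the isometry $U$, and then read off everything from Proposition~\ref{prop3.4}. Since $U$ is linear and isometric, \eqref{norm} together with hypothesis~(\ref{(5)}) gives
\[
\Vinf{\ta-\t{c}}=\VB{a-c}=|\con(a)-\con(c)|+\Vinf{\der(a)-\der(c)}
\]
for all $a,c\in\Ao$. Moreover, Proposition~\ref{prop3.4} (combined with the defining relation \eqref{V}) pins down the two sets involved: every $\ta\in kV_{(x,k)}$ satisfies $\der(a)(x)=\Vinf{\der(a)}$ and $\con(a)=k(1-\Vinf{\der(a)})$, while every $\tb\in kV_{(x,z)}$ satisfies $\der(b)(x)=k\ov{z}\,\Vinf{\der(b)}$ and $\con(b)=k(1-\Vinf{\der(b)})$ (here one uses $k\in\set{\pm1}$, so $\ov{k}=k$ and $k^2=1$). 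I will bound each of the two suprema from above and from below separately.

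For the upper bounds I build an explicit competitor by rotating the derivative part. Given $\ta\in kV_{(x,k)}$, I set $c=\INT(k\ov{z}\,\der(a))+\con(a)\,e\in\Ao$, so that $\der(c)=k\ov{z}\,\der(a)$ and $\con(c)=\con(a)$; the description above then yields $\t{c}(x,z)=\con(a)+k\,\der(a)(x)=k$ and $\VB{c}=1$, i.e. $\t{c}\in kV_{(x,z)}$. As the functional parts cancel,
\[
\Vinf{\ta-\t{c}}=\Vinf{(1-k\ov{z})\der(a)}=|k-z|\,\Vinf{\der(a)}\leq|k-z|.
\]
The mirror construction $\INT(kz\,\der(b))+\con(b)\,e$ sends each $\tb\in kV_{(x,z)}$ to an element of $kV_{(x,k)}$ at distance $|k-z|\,\Vinf{\der(b)}\leq|k-z|$. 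Hence both suprema are at most $|k-z|$.

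For the matching lower bounds I exhibit extremal elements using the extreme $C$-regularity of $A$: since $x\in\cha$, there is $f\in\SA$ with $f(x)=1$. For the first supremum I take $\ta=\t{\INT(f)}\in kV_{(x,k)}$; then $\ta(x,z)=f(x)z=z$, whereas $\t{c}(x,z)=k$ for every $\t{c}\in kV_{(x,z)}$, so $\Vinf{\ta-\t{c}}\geq|z-k|$ and therefore $d(\ta,kV_{(x,z)})\geq|k-z|$. For the second supremum I take $\tb=\t{\INT(k\ov{z}\,f)}\in kV_{(x,z)}$; then $\tb(x,k)=k\ov{z}f(x)k=\ov{z}$, whereas $\t{c}(x,k)=k$ for every $\t{c}\in kV_{(x,k)}$, so $d(kV_{(x,k)},\tb)\geq|k-\ov{z}|=|k-z|$. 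Combining the two bounds shows that each supremum equals $|k-z|$, and hence so does their maximum $\dh(kV_{(x,k)},kV_{(x,z)})$.

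I expect the upper bound to be the crux: one has to produce, uniformly over all (generally non-extremal) members of one maximal convex set, a partner in the other that stays within $|k-z|$, and the rotation $\der(c)=k\ov{z}\,\der(a)$ is precisely what makes the functional parts cancel while keeping the competitor on $\SB$. Its verification rests entirely on the normal forms supplied by Proposition~\ref{prop3.4}; by contrast, the lower bounds reduce to a single point evaluation and are comparatively immediate.
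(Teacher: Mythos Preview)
Your proof is correct and follows essentially the same route as the paper. The upper-bound competitor $c=\con(a)\,e+\INT(k\ov z\,\der(a))$ is exactly the paper's $b_0$, and your mirror element is their $a_0$. The only difference is in the lower bound: the paper first proves the sharper pointwise identity $d(\ta,kV_{(x,z)})=|k-z|\,\Vinf{\der(a)}$ for every $\ta\in kV_{(x,k)}$ (via an algebraic estimate, equation~\eqref{eq bounds from below in L3.5}) and then picks $a_1=\INT(f_1)$ to attain $\Vinf{\der(a_1)}=1$, whereas you obtain the lower bound directly by evaluating at $(x,z)$ (respectively $(x,k)$), which is shorter but foregoes the exact distance formula. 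Since the lemma only asks for the suprema, either argument suffices.
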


\begin{proof}
Let us fix $\ta\in kV_{(x,k)}$ and $\tb\in kV_{(x,z)}$.
By \eqref{tilde} and \eqref{V}, we get
\begin{equation}\label{lem2.11.2}
\con(a)+\der(a)(x)k=\ta (x,k)=k
\q\mbox{and}\q
\con(b)+\der(b)(x)z= \tb (x,k)=k.
\end{equation}
We have $\der(a)(x)=\Vinf{\der(a)}$ and
$k\der(b)(x)z=\Vinf{\der(b)}$ by Proposition~\ref{prop3.4}.
We derive from \eqref{norm} and \eqref{lem2.11.2} that
\begin{align*}
|(k-z)\der(a)(x)|
&\leq
|k\der(a)(x)-\der(b)(x)z|+|\der(b)(x)z-z\der(a)(x)|\\
&=
|(k-\con(a))-(k-\con(b))|+|\der(b)(x)-\der(a)(x)|\\
&\leq
|\con(b)-\con(a)|+\Vinf{\der(b)-\der(a)}
=\VB{b-a}=\Vinf{\ta-\tb}.
\end{align*}
Since $\der(a)(x)=\Vinf{\der(a)}$, we obtain
$|k-z|\,\Vinf{\der(a)}\leq\Vinf{\ta-\tb}$.
By the same reasonings, we get
\begin{align*}
|(k-z)\der(b)(x)|
&\leq
|k\der(b)(x)-\der(a)(x)k|+|\der(a)(x)k-z\der(b)(x)|\\
&=
|\der(b)(x)-\der(a)(x)|+|(k-\con(a))-(k-\con(b))|\\
&\leq
\Vinf{\der(b)-\der(a)}+|\con(b)-\con(a)|
=\VB{b-a}=\Vinf{\ta-\tb}.
\end{align*}
Noting that $k\der(b)(x)z=\Vinf{\der(b)}$, we have
$|k-z|\,\Vinf{\der(b)}\leq\Vinf{\ta-\tb}$.
Since $\ta\in kV_{(x,k)}$ and $\tb\in kV_{(x,z)}$ are arbitrary,
we conclude that 
\begin{equation}\label{eq bounds from below in L3.5} |k-z|\,\Vinf{\der(a)}\leq d(\ta,kV_{(x,z)}),
	\hbox{ and }
	|k-z|\,\Vinf{\der(b)}\leq d(kV_{(x,k)},\tb),
\end{equation} for all $\ta\in kV_{(x,k)}$ and $\tb\in kV_{(x,z)}$.

Now we set $b_0=\con(a)e+k\ov{z}\,\mathcal{I}(\der(a))$
and $a_0=\con(b)e+kz\mathcal{I}(\der(b))$.
Since $\con$ and $\der$ are both linear maps satisfying hypotheses $(\ref{(1)})$ and $(\ref{(4)})$,
$\con(e)=1$ and $\der(e)=0$, we get $\t{b_0}(y,\nu)=\con(a)+k\ov{z}\,\der(a)(y)\nu$
and $\t{a_0}(y,\nu)=\con(b)+kz\der(b)(y)\nu$ for all $(y,\nu)\in X\times\T$.
In particular, $\t{b_0}(x,z)=k$ and $\t{a_0}(x,k)=k$ by \eqref{lem2.11.2},
which yields $\t{b_0}\in kV_{(x,z)}$ and $\t{a_0}\in kV_{(x,k)}$.
Having in mind that $\ta(y,\nu)=\con(a)+\der(a)(y)\nu$ and
$\tb(y,\nu)=\con(b)+\der(b)(y)\nu$ by \eqref{tilde}, we obtain
\begin{align*}
\Vinf{\ta-\t{b_0}}
&=
\sup_{(y,\nu)\in X\times\T}|\tf(y,\nu)-\t{b_0}(y,\nu)|
=\sup_{(y,\nu)\in X\times\T}|(1-k\ov{z})\der(a)(y)|\\
&=
|1-k\ov{z}|\,\Vinf{\der(a)}
=|k-z|\,\Vinf{\der(a)},
\end{align*}
and 
\begin{align*}
	\Vinf{\t{a_0}-\tb}
	&=
	\sup_{(y,\nu)\in X\times\T}|(kz-1)\der(b)(y)|
	=|k-z|\,\Vinf{\der(b)}.
\end{align*} Since $\t{b_0}\in kV_{(x,z)}$ and $\t{a_0}\in kV_{(x,k)}$, by \eqref{eq bounds from below in L3.5}
we have $$d(\ta,kV_{(x,z)})= |k-z|\,\Vinf{\der(a)} \le |k - z|$$ and
$$d(kV_{(x,k)},\tb)= |k-z|\,\Vinf{\der(b)} \le |k - z|.$$
By the arbitrariness of $\ta $ and $\tb$ we arrive to \begin{equation}\label{eq bound from above in L3.5} \sup_{\ta\in kV_{(x,k)}}d(\ta,kV_{(x,z)}),\sup_{\tb\in kV_{(x,z)}}d(kV_{(x,k)},\tb)\leq |k-z|.
\end{equation}
Since $A$ is extremely C-regular, there exists $f_1\in S_{A}$ satisfying $f_1(x)=1$ (and an additional property not required here). Define $a_1=\INT(f_1)$ and $b_1=\INT(k \ov{z} f_1)$ in $B$.  It is not hard to see that $\t{a_1}\in kV_{(x,k)}$, $\t{b_1}\in kV_{(x,z)}$ with $\|a_1\|_{B}= \Vinf{\der(a_1)}=1=\Vinf{\der(b_1)} = \|b_1\|_{B}$ by hypotheses $(\ref{(1)}),$ $(\ref{(4)})$ and \eqref{tilde}.
	Therefore, by \eqref{eq bounds from below in L3.5} and \eqref{eq bound from above in L3.5} we obtain
	$$d(\t{a_1},kV_{(x,z)}), d(kV_{(x,k)},\t{b_1})= |k-z|.$$ By a new application of \eqref{eq bound from above in L3.5}, we conclude that  
	$$\sup_{\ta\in kV_{(x,k)}}d(\ta,kV_{(x,z)})=|k-z|
	=\sup_{\tb\in kV_{(x,z)}}d(kV_{(x,k)},\tb).\eqno\qedhere$$
\end{proof}

\begin{rem}\label{rem1} In \cite[Proposition~4.4.]{hat}, Hatori proved that each complex Banach space satisfying the Hausdorff distance condition besides another technical assumption has the complex Mazur--Ulam property. We shall show that each complex Banach space $B$ satisfying the hypotheses in Theorem~\ref{thm2}  fails the Hausdorff distance condition.  For this purpose we refresh first some notation.\smallskip

Let $\mathcal{F}$ be the set of all maximal convex subsets of $\SAo$ and $\mathcal{Q}=\set{p\in\ext{B^*_1}:p^{-1}(1)\cap\SAo\in\mathcal{F}}$. For each pair of $p\in\mathcal{Q}$ and $\la\in\T$, we denote
$F_{p,\la}=\set{a\in\SAo:p(a)=\la}$,
that is, $F_{p,\la}=(\ov{\la}p)^{-1}(1)\cap\SAo$.
We define a binary relation $\sim$ on $\mathcal{Q}$:
For $p,q\in\mathcal{Q}$, $p\sim q$ if there exists $\la\in\T$ such that
$F_{p,1}=F_{q,\la}$.
We can check that $\sim$ is an equivalence relation on $\mathcal{Q}$.
A set of representatives for the quotient space $\mathcal{Q}/{\sim}$
is a subset  $\mathcal{P}$ of $\mathcal{Q}$ containing one and only
one element in each class of the quotient space.
Under this notation, for each $K\in\mathcal{F}$ there exists a unique
pair of $(p,\la)\in\mathcal{P}\times\T$ such that $K=F_{p,\la}$
(see \cite[Lemma~2.5]{hat}).
According to \cite[Definition~3.2]{hat}, we say that $B$ satisfies
the {\it Hausdorff distance condition} if $\dh(F_{p,\la},F_{q,\mu})=2$
for every $(p,\la),(q,\mu)\in\mathcal{P}\times\T$ with $p\neq q$
in $\mathcal{P}$.

We are now ready to show that each subspace $B$ fulfilling the hypotheses of Theorem~\ref{thm2} fails the Hausdorff distance condition.
Take $x\in\cha$, and set $\eta_1=(x,1),$ $\eta_2=(x,i)\in\MT$.
Then $V_{\eta_1}$ and $V_{\eta_2}$ are maximal convex subsets
of $\SB$ by Proposition~\ref{prop2.4}.
We have $\dh(V_{\eta_1},V_{\eta_2})=|1-i|=\sqrt{2}$ by Lemma~\ref{lem3.5}.
Let $U$ be the complex linear isometry from $\Ao$ onto $\TB$ considered in the introduction, that is, $U(a)=\ta$ for $a\in B$.
By \cite[Lemma~3.5]{tana}, we see that $U^{-1}(V_{\eta_j})$ is a maximal
convex subset of $\SAo$ for $j=1,2$.
There exists $(p_j,\la_j)\in\mathcal{P}\times\T$ such that
$U^{-1}(V_{\eta_j})=F_{p_j,\la_j}$.
Since $U\colon\Ao\to\TB$ is a surjective isometry,
we observe that $U$ preserves the Hausdorff distance,
that is, $\dh(F_{p_1,\la_1},F_{p_2,\la_2})=\dh(V_{\eta_1},V_{\eta_2})=\sqrt{2}$
(see \eqref{defn1.2}).
We shall prove that $p_1\neq p_2$ in $\mathcal{P}$.
Suppose, on the contrary, that $p_1=p_2$ in $\mathcal{P}$,
that is, $p_1\sim p_2$ in $\mathcal{Q}$. 
Then there exists $\la_0\in\T$ such that
$F_{p_1,1}=F_{p_2,\la_0}$.
Note that $F_{p,\la}=\la F_{p,1}$ for all $(p,\la)\in\mathcal{P}\times\T$; in fact,
\begin{align*}
F_{p,\la}
&=
\set{a\in\SAo:p(a)=\la}
=\set{a\in\SAo:p(\ov{\la}a)=1}\\
&=\la\set{b\in\SAo:p(b)=1}=\la F_{p,1}.
\end{align*}
Having in mind that $U^{-1}(V_{\eta_j})=F_{p_j,\la_j}$, we get
\begin{align*}
V_{\eta_1}
&=
U(F_{p_1,\la_1})=U(\la_1F_{p_1,1})=\la_1U(F_{p_1,1})
=\la_1U(F_{p_2,\la_0})\\
&=\la_1\la_0\ov{\la_2}U(F_{p_2,\la_2})
=\la_1\la_0\ov{\la_2}V_{\eta_2}.
\end{align*}
Applying Lemma~\ref{lem2.1}, we have $\eta_1=\eta_2$,
which contradicts $\eta_1=(x,1)$ and $\eta_2=(x,i)$.
Consequently, $p_1\neq p_2$ in $\mathcal{P}$.
Therefore, $(p_1,\la_1),(p_2,\la_2)\in\mathcal{P}\times\T$ satisfy
$p_1\neq p_2$ in $\mathcal{P}$ and
$\dh(F_{p_1,\la_1},F_{p_2,\la_2})=\sqrt{2}\neq2$.
This shows that $B$ fails the Hausdorff distance condition.
\end{rem}

In the next three results, we determine the behaviour of the maps
$\phio$ and $\phit$.

\begin{lem}\label{lem3.6}
For each $x\in\M$ and $z\in\T$, we have $\phio(x,z)=\phio(x,1)$.
\end{lem}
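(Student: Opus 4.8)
The statement asserts that the first coordinate map $\phio$ is independent of the $\T$-component of its argument: $\phio(x,z)=\phio(x,1)$ for all $x\in\M$ and $z\in\T$. Since Lemma~\ref{lem3.2} already removed the $\la$-dependence of $\phio$, the remaining task is to show insensitivity to the second slot of $\eta=(x,z)$. The natural engine for this is the Hausdorff-distance invariance~\eqref{defn1.2} together with the explicit computation in Lemma~\ref{lem3.5}, since the Hausdorff distance between two maximal convex sets sharing the same $\M$-coordinate is controlled, while sets with different $\M$-coordinates ought to sit at distance~$2$ (one expects extreme C-regularity to force this, as in Remark~\ref{rem1}).

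The plan is to argue by contradiction: fix $x\in\M$ and $z\in\T$, and suppose $\phio(x,z)\neq\phio(x,1)$. The idea is to compare the images under $T$ of two maximal convex sets built on the common first coordinate $x$, namely $V_{(x,1)}$ and $V_{(x,z)}$. By Lemma~\ref{lem3.5} we have $\dh(V_{(x,1)},V_{(x,z)})=|1-z|\leq 2$, and the inequality is strict when $z\neq -1$. Applying~\eqref{lem3.3.2}, the $T$-images are $\al(x,1)V_{(\phio(x,1),\phit(1,(x,1)))}$ and $\al(x,z)V_{(\phio(x,z),\phit(1,(x,z)))}$, maximal convex sets whose first coordinates are, by the contradiction hypothesis, the \emph{distinct} points $\phio(x,1)\neq\phio(x,z)$ of $\cha$. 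First I would establish that any two maximal convex subsets $\mu_1 V_{(y_1,w_1)}$ and $\mu_2 V_{(y_2,w_2)}$ of $\SB$ with $y_1\neq y_2$ in $\cha$ are at Hausdorff distance exactly $2$; this is where extreme C-regularity enters, by separating the two points of $\cha$ with a function $f\in\SA$ concentrated near $y_1$ and small near $y_2$, and lifting it via $\INT$ to produce elements of one set that are essentially at maximal distance from the other (the constant $\te\equiv 1$ trick plus sign choices via $\la$ giving the value $2$). Granting this, invariance~\eqref{defn1.2} forces
\[
|1-z|=\dh(V_{(x,1)},V_{(x,z)})=\dh\bigl(T(V_{(x,1)}),T(V_{(x,z)})\bigr)=2,
\]
so necessarily $z=-1$.

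It remains to handle the residual case $z=-1$, which the distance argument cannot exclude because $|1-(-1)|=2$ as well. Here I would exploit an auxiliary intermediate point on $\T$: apply the already-proved identity to $z'=i$ (for which $|1-i|=\sqrt2<2$ forces $\phio(x,i)=\phio(x,1)$) and likewise to the pair $(i,-1)$ via $|i-(-1)|=\sqrt2$, chaining through to conclude $\phio(x,-1)=\phio(x,i)=\phio(x,1)$. Concretely, once the strict-inequality case gives $\phio(x,w)=\phio(x,1)$ for every $w\in\T$ with $|1-w|<2$, a second comparison of $V_{(x,i)}$ with $V_{(x,-1)}$, again using Lemma~\ref{lem3.5} to get $\dh=|i-(-1)|=\sqrt2<2$ and the same separation-distance dichotomy, yields $\phio(x,-1)=\phio(x,i)$, and transitivity closes the gap.

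I expect the main obstacle to be the preliminary claim that maximal convex sets with distinct $\cha$-first-coordinates are at Hausdorff distance exactly~$2$. The upper bound $\dh\le 2$ is automatic on the unit sphere, so the content is the lower bound: given $y_1\neq y_2$ one must produce, for each prescribed tolerance, an element $\ta$ in one set whose distance to the \emph{entire} other set is close to $2$. The construction should use extreme C-regularity to build $f\in\SA$ with $f(y_1)=1$ and $|f|$ small off a neighborhood of $y_1$, set $\ta=\widetilde{\INT(\text{(unimodular factor)}\,f)}$ so that $\ta$ lies in the first set while taking a value near $-1$ (after the appropriate phase) at the defining point of the second set, thereby forcing $\Vinf{\ta-\tb}$ near $2$ for every $\tb$ in the second set. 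Matching the phases correctly—so that the witnessing element genuinely lands in $T(V_{(x,1)})$ and realizes the near-$2$ separation—is the delicate bookkeeping step, but it parallels the estimates already carried out in the proofs of Lemmata~\ref{lem2.2} and~\ref{lem3.5}.
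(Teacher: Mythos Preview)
Your overall strategy matches the paper's: use Hausdorff-distance invariance together with Lemma~\ref{lem3.5} to force $|1-z|=2$ whenever $\phio(x,z)\neq\phio(x,1)$, then handle the leftover case $z=-1$ by routing through $z=i$. The paper organizes the chaining slightly differently---it runs the contradiction argument with $kV_{(x,k)}$ versus $kV_{(x,z)}$ for both $k=1$ and $k=-1$ simultaneously (so that any $z\in\T\setminus\{\pm1\}$, in particular $z=i$, yields $\phio(x,z)=\phio(x,1)$ \emph{and} $\phio(x,z)=\phio(x,-1)$ in one stroke)---but your version can be made to work just as well once you note $\dh(-F_1,-F_2)=\dh(F_1,F_2)$.

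The genuine gap is in your plan for the preliminary claim that $\dh(\mu_1 V_{(y_1,w_1)},\mu_2 V_{(y_2,w_2)})=2$ whenever $y_1\neq y_2$ in $\cha$. Your proposed witness $\ta=\widetilde{\INT(\gamma f)}$, with $\gamma\in\T$, $f(y_1)=1$, and $|f|$ small off a neighbourhood of $y_1$, satisfies $\ta(y_2,w_2)=\gamma f(y_2)w_2\approx 0$, \emph{not} $\approx-\mu_2$ as you assert. Evaluating $\ta-\tb$ at $(y_2,w_2)$ therefore gives only $|\ta(y_2,w_2)-\mu_2|\approx 1$, and no better estimate is available elsewhere since both functions have sup-norm $1$. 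One-point peaking is simply not strong enough here. What the paper does---and what repairs your argument---is invoke \emph{two-point} unimodular interpolation for extremely C-regular spaces (\cite[Proposition~5.4]{hat3}): one can choose $f\in\SA$ with $f(y_1)$ and $f(y_2)$ equal to any prescribed values in $\T$, and then set those values so that $\widetilde{\INT(f)}\in \mu_1 V_{(y_1,w_1)}\cap(-\mu_2)V_{(y_2,w_2)}$. Once an element of $F_1\cap(-F_2)$ exists, $\dh(F_1,F_2)=2$ is immediate: writing $F_2=\xi^{-1}(1)\cap\SB$ for an extreme $\xi\in\TB_1^*$, one has $\xi(\ta)=-1$, so $\Vinf{\ta-\tb}\ge|\xi(\ta-\tb)|=2$ for every $\tb\in F_2$. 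This is precisely the content of \cite[Lemma~3.1]{hat} that the paper cites.
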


\begin{proof} Fix arbitrary $k\in\set{\pm1},$ $x\in\M$ and $z\in\T\sm\set{\pm1}$.
Suppose, contrary to the desired statement, that $\phio(x,z)\neq\phio(x,k)$.
By \cite[Proposition~5.4 and Example~5.2]{hat3} and having in mind that $A$ is extremely C-regular, we can find $f_k\in\SA$ such that
\begin{align*}
f_k(\phio(x,z))
&=
k \al(x,z) \ov{\phit(k,(x,z))}
\q\mbox{and}\\
f_k(\phio(x,k))
&=
-k \al(x,k) \ov{\phit(k,(x,k))}.
\end{align*}
Here we set $b_k=\mathcal{I}(f_k)$.
Since $\phio(\la,\eta)=\phio(\eta)$ by Lemma~\ref{lem3.2}, we see that
\[
\t{b_k}(\phi(k,(x,z)))=k\al(x,z)
\q\mbox{and}\q
\t{b_k}(\phi(k,(x,k)))=-k\al(x,k)
\]
by \eqref{tilde}, and hypotheses $(\ref{(1)})$ and $(\ref{(4)})$.
This shows that
\[
\t{b_k}\in k\al(x,z)V_{\phi(k,(x,z))}\cap(-k\al(x,k)V_{\phi(k,(x,k))}).
\]
It follows from \cite[Lemma~3.1]{hat} that
\[
\dh(k\al(x,z)V_{\phi(k,(x,z))},k\al(x,k)V_{\phi(k,(x,k))})=2.
\]
Here, we note that
$k\al(x,z)V_{\phi(k,(x,z))}=T(kV_{(x,z)})$ and
$k\al(x,k)V_{\phi(k,(x,k))}=T(kV_{(x,k)})$ by \eqref{lem3.1.1}
with Lemma~\ref{lem3.3}, where $T$ is the surjective isometry on $S_{\tilde{B}}$ given by $T=U\del U^{-1}$.
Thus,
\begin{align*}
2
&=
\dh(k\al(x,z)V_{\phi(k,(x,z))},k\al(x,k)V_{\phi(k,(x,k))})\\
&=
\dh(T(kV_{(x,z)}),T(kV_{(x,k)}))
=\dh(kV_{(x,z)},kV_{(x,k)})=|k-z|
\end{align*}
by Lemma~\ref{lem3.5}.
Hence $|k-z|=2$, and then
we must have $z=-k$, which contradicts $z\neq\pm1$.
This shows that $\phio(x,z)=\phio(x,k)$ provided $z\neq\pm1$.
In particular, $\phio(x,i)=\phio(x,k)$ for $k=\pm1$, which yields
$\phio(x,-1)=\phio(x,i)=\phio(x,1)$.
This proves that $\phio(x,z)=\phio(x,1)$ for all $x\in\cha$ and $z\in\T$.
\end{proof}

The next result plays a fundamental role to determine the map $\phit$.

\begin{lem}\label{lem3.7}
The inequalities 
\begin{equation}\label{lem3.7.1}
\begin{aligned}
|\la^{\e(\eta)}\ov{\phit(\la,\eta)}\phit(\mu,\eta)-\mu^{\e(\eta)}|
&\leq
|\la-\mu|,
\q\mbox{and}\\
|\la^{\e(\eta)}\ov{\phit(\la,\eta)}\phit(\mu,\eta)+\mu^{\e(\eta)}|
&\leq|
\la+\mu|,	
\end{aligned}
\end{equation}
hold for all $\la,\mu\in\T$ and $\eta\in\M\times\T$.
\end{lem}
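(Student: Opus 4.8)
The plan is to extract both inequalities directly from the fact that $T$, and hence $T^{-1}$, is a surjective isometry for $\Vinf{\cdot}$, together with the explicit description of the images of the maximal convex sets recorded in \eqref{lem3.3.2}. Throughout I fix $\la,\mu\in\T$ and $\eta\in\MT$ and abbreviate $x'=\phio(\eta)\in\cha$. Recalling \eqref{lem3.3.2} and Lemma~\ref{lem3.2}, I have
\[
T(\la V_\eta)=\la^{\e(\eta)}\al(\eta)\,V_{(x',\phit(\la,\eta))}
\q\mbox{and}\q
T(\mu V_\eta)=\mu^{\e(\eta)}\al(\eta)\,V_{(x',\phit(\mu,\eta))}.
\]

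The crucial step is to manufacture, inside $T(\la V_\eta)$, a single test function whose value at the centre $\zeta=(x',\phit(\mu,\eta))$ of $T(\mu V_\eta)$ is exactly $\la^{\e(\eta)}\al(\eta)\ov{\phit(\la,\eta)}\phit(\mu,\eta)$. Using the extreme $C$-regularity of $A$, I would choose $f_0\in\SA$ with $f_0(x')=1$ and set $\t{w}=\la^{\e(\eta)}\al(\eta)\ov{\phit(\la,\eta)}\,\widetilde{\INT(f_0)}\in\SB$ (note $\widetilde{\INT(f_0)}\in\SB$), so that $\t{w}(y,s)=\la^{\e(\eta)}\al(\eta)\ov{\phit(\la,\eta)}f_0(y)s$ for all $(y,s)\in X\times\T$ by \eqref{tilde} and hypotheses $(\ref{(1)})$, $(\ref{(4)})$. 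Evaluating at $(x',\phit(\la,\eta))$ gives $\la^{\e(\eta)}\al(\eta)$, so $\t{w}\in T(\la V_\eta)$; evaluating at $\zeta$ gives precisely $\la^{\e(\eta)}\al(\eta)\ov{\phit(\la,\eta)}\phit(\mu,\eta)$.

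Next I would produce a partner in $T(\mu V_\eta)$ lying at distance at most $|\la-\mu|$ from $\t{w}$, and the point is that this requires no pointwise control of $T$. Setting $\ta=T^{-1}(\t{w})\in\la V_\eta$, I have $\ta(\eta)=\la$ by \eqref{V}, so the rotation $\tb=\mu\ov{\la}\,\ta$ satisfies $\tb(\eta)=\mu$, whence $\tb\in\mu V_\eta$, and $\Vinf{\ta-\tb}=|1-\mu\ov{\la}|\,\Vinf{\ta}=|\la-\mu|$. Putting $\t{v}=T(\tb)\in T(\mu V_\eta)$, its value at $\zeta$ is forced to be $\mu^{\e(\eta)}\al(\eta)$, while the isometry gives $\Vinf{\t{w}-\t{v}}=\Vinf{\ta-\tb}=|\la-\mu|$. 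Comparing the two functions at $\zeta$ and using $|\al(\eta)|=1$ then yields
\[
|\la^{\e(\eta)}\ov{\phit(\la,\eta)}\phit(\mu,\eta)-\mu^{\e(\eta)}|
=|\t{w}(\zeta)-\t{v}(\zeta)|
\leq\Vinf{\t{w}-\t{v}}=|\la-\mu|,
\]
which is the first inequality in \eqref{lem3.7.1}. The second follows by replacing $\mu$ with $-\mu$ and invoking $\phit(-\mu,\eta)=\phit(\mu,\eta)$ from \eqref{lem2.6.1} together with $(-\mu)^{\e(\eta)}=-\mu^{\e(\eta)}$.

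I expect the main obstacle to be getting the \emph{direction} of the estimate right. A naive pairing of convenient elements pulled back by $T^{-1}$ only produces the reverse bound $|\la-\mu|\leq|\la^{\e(\eta)}\ov{\phit(\la,\eta)}\phit(\mu,\eta)-\mu^{\e(\eta)}|$, which is useless here. The device that overcomes this is to begin with a \emph{prescribed} element of $T(\la V_\eta)$ whose value at the centre $\zeta$ of the other set is the very quantity to be bounded, pull it back by $T^{-1}$, rotate by $\mu\ov{\la}$, and push it forward again: the rotation lands automatically in $\mu V_\eta$ at the exact distance $|\la-\mu|$, so the isometry converts this into the desired upper bound.
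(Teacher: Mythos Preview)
Your argument is correct, and it follows a genuinely different route from the paper at the key step. Both proofs begin identically, choosing $f_0\in\SA$ with $f_0(\phio(\eta))=1$ and forming the test element $\t{w}=\la^{\e(\eta)}\al(\eta)\ov{\phit(\la,\eta)}\,\widetilde{\INT(f_0)}\in T(\la V_\eta)$ (the paper's $\t{a_1}$). From there the paper argues differently: for an \emph{arbitrary} $\t{b_1}\in T(\mu V_\eta)$ it uses the $(\con,\der)$--decomposition of $\TB$ to bound $|\la^{\e(\eta)}\ov{\phit(\la,\eta)}-\mu^{\e(\eta)}\ov{\phit(\mu,\eta)}|\le\Vinf{\t{a_1}-\t{b_1}}$, takes the infimum over $\t{b_1}$, and then invokes the external identity $\dh(\la V_\eta,\mu V_\eta)=|\la-\mu|$ from \cite[Lemma~3.1]{hat} together with \eqref{defn1.2}. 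Your rotation trick bypasses all of this: pulling $\t{w}$ back to $\ta\in\la V_\eta$, the scalar rotation $\tb=\mu\ov{\la}\,\ta$ lands in $\mu V_\eta$ at the \emph{exact} distance $|\la-\mu|$, so after pushing forward and evaluating at the single point $\zeta=(\phio(\eta),\phit(\mu,\eta))$ the bound falls out immediately. Your proof is therefore more self-contained (no Hausdorff distance lemma is needed), while the paper's version fits into the $\dh$--framework used throughout Section~3. Both are valid; yours is shorter.
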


\begin{proof}
Fix arbitrary $\la,\mu\in\T$ and $\eta\in\M\times\T$.
We derive from \cite[Lemma~3.1]{hat} that
\begin{equation}\label{lem3.7.2}
\dh(\la V_\eta,\mu V_\eta)=|\la-\mu|.
\end{equation}
Since $\phio(\eta)=\phio(\la,\eta)\in\cha$, having in mind that $A$ is extremely C-regular, we can find $f_0\in S_{A}$ satisfying $f_0 (\phio(\eta)) =1$ (and an additional property not required here). By defining $a_1=\al(\la,\eta)\ov{\phit(\la,\eta)}\mathcal{I}(f_0)\in S_{B}$,  we see that 
$\con(a_1)=0$ and $\der(a_1)(\phio(\eta))\phit(\la,\eta)=\al(\la,\eta)$
by hypotheses $(\ref{(1)})$ and $(\ref{(4)})$.
In particular, we have $\t{a_1}\in\al(\la,\eta)V_{\phi(\la,\eta)}=T(\la V_\eta)$
by \eqref{tilde}
and \eqref{lem3.1.1}.
Setting $\t{b_1}=T(\tg)$ for each $\tb\in\mu V_\eta$, we get
$\t{b_1}\in T(\mu V_\eta)=\al(\mu,\eta)V_{\phi(\mu,\eta)}$.
By definition, we obtain
$\con(b_1)+\der(b_1)(\phio(\eta))\phit(\mu,\eta)=\al(\mu,\eta)$.
Since $\con(a_1)=0$, we have
\begin{multline*}
|\al(\la,\eta)\ov{\phit(\la,\eta)}-\al(\mu,\eta)\ov{\phit(\mu,\eta)}|\\
\leq
|\der(a_1)(\phio(\eta))-\der(b_1)(\phio(\eta))|+|\con(b_1)\ov{\phit(\mu,\eta)}|\\
\leq
|\con(a_1)-\con(b_1)|+\Vinf{\der(a_1)-\der(b_1)}
=\VB{a_1-b_1}=\Vinf{\t{a_1}-\t{b_1}}.
\end{multline*}
Noting that, by Lemma~\ref{lem3.3}, $\al(\nu,\zeta)=\nu^{\e(\zeta)}\al(\zeta)$ for $\nu\in\T$ and
$\zeta\in\MT$, we deduce from the above
inequalities that
$|\la^{\e(\eta)}\ov{\phit(\la,\eta)}-\mu^{\e(\eta)}\ov{\phit(\mu,\eta)}|
\leq\Vinf{\t{a_1}-\t{b_1}}$.
Since $\t{b_1}=T(\tb)\in T(\mu V_\eta)$ is arbitrary, we get
$$|\la^{\e(\eta)}\ov{\phit(\la,\eta)}-\mu^{\e(\eta)}\ov{\phit(\mu,\eta)}|
\leq d(\t{a_1},T(\mu V_\eta)).$$
We infer from \eqref{defn1.1}, \eqref{defn1.2} and \eqref{lem3.7.2} that
\begin{multline*}
|\la^{\e(\eta)}\ov{\phit(\la,\eta)}-\mu^{\e(\eta)}\ov{\phit(\mu,\eta)}|
\leq\sup_{T(\tf)\in T(\la V_\eta)}d(T(\tf),T(\mu V_\eta))\\
\leq\dh(T(\la V_\eta),T(\mu V_\eta))=\dh(\la V_\eta,\mu V_\eta)
=|\la-\mu|.
\end{multline*}
Thus, $|\la^{\e(\eta)}\ov{\phit(\la,\eta)}\phit(\mu,\eta)-\mu^{\e(\eta)}|\leq|\la-\mu|$.
Since $\phit(-\mu,\eta)=\phit(\mu,\eta)$ by \eqref{lem2.6.1}, we obtain
$|\la^{\e(\eta)}\ov{\phit(\la,\eta)}\phit(\mu,\eta)+\mu^{\e(\eta)}|\leq|\la+\mu|$.
\end{proof}

\begin{lem}\label{lem3.8}
There exists a map $\et\colon\MT\to\set{\pm1}$
satisfying $\phit(\la,\eta)=\la^{\e(\eta)-\et(\eta)}\phit(1,\eta)$ for all $\la\in\T$
and $\eta\in\MT$.
\end{lem}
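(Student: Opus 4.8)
The plan is to extract the two signs $\e(\eta)$ and $\et(\eta)$ from the inequalities in Lemma~\ref{lem3.7} in the same spirit as Lemma~\ref{lem3.3}, where a single sign $\e(\eta)$ governs how $\al$ behaves on $\T$. Here, however, the relevant quantity is the combination $\la^{\e(\eta)}\ov{\phit(\la,\eta)}$, so I expect the outcome to be that $\la^{\e(\eta)}\ov{\phit(\la,\eta)}$ equals a fixed unit times either $\la$ or $\ov\la$, and the choice of sign will define $\et(\eta)$. First I would fix $\eta\in\MT$ and introduce the auxiliary function $g(\la)=\la^{\e(\eta)}\ov{\phit(\la,\eta)}$, which lies in $\T$ for every $\la$. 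The two inequalities \eqref{lem3.7.1} rewritten in terms of $g$ read $|g(\la)\ov{g(\mu)}-(\mu/\la)^{\e(\eta)}|\leq|\la-\mu|$ and the companion inequality with a plus sign; taking $\mu=1$ and $\mu=-1$ should pin down $g(\la)\ov{g(1)}$ to lie in $\set{\la^{\e(\eta)},\ov{\la}^{\,\e(\eta)}}$ (this mirrors the step in Lemma~\ref{lem3.3} exactly, since the geometry of the inequalities is identical).

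Next I would feed in $\mu=i$ and $\mu=-i$, again imitating Lemma~\ref{lem3.3}, to produce a second membership constraint on $g(\la)\ov{g(1)}$, and intersect the two constraints. The sign ambiguity that survives after choosing $\la=i$ is precisely what I would package into the map $\et\colon\MT\to\set{\pm1}$: concretely I would \emph{define} $\et(\eta)$ so that $g(i)\ov{g(1)}=i^{-\et(\eta)}$, or equivalently $\phit(i,\eta)\ov{\phit(1,\eta)}=i^{\,\e(\eta)}\cdot i^{\,\et(\eta)}$ up to the bookkeeping that makes the final exponent come out as $\e(\eta)-\et(\eta)$. Then for a general $\la\in\T\sm\set{\pm1}$ the intersection of the two membership sets is a singleton (because $\la\neq\pm1$ forces $\la\neq\ov\la$), and it yields $g(\la)\ov{g(1)}=\la^{-\et(\eta)}$; unwinding the definition of $g$ gives $\la^{\e(\eta)}\ov{\phit(\la,\eta)}=\la^{-\et(\eta)}\cdot\la^{\e(\eta)}\ov{\phit(1,\eta)}$, i.e. $\phit(\la,\eta)=\la^{\e(\eta)-\et(\eta)}\phit(1,\eta)$. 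The cases $\la=\pm1$ are handled separately using \eqref{lem2.6.1}, exactly as in Lemma~\ref{lem3.3}.

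The main obstacle I anticipate is purely bookkeeping rather than conceptual: keeping track of the conjugates and of the exponent $\e(\eta)$ that is already attached to $\la$ inside $g$, so that the two sign functions combine into the single exponent $\e(\eta)-\et(\eta)$ rather than, say, $\e(\eta)+\et(\eta)$ or $-\et(\eta)$. I would be careful that the inequalities \eqref{lem3.7.1} are symmetric in $(\la,\mu)$ and involve $\ov{\phit(\la,\eta)}\phit(\mu,\eta)$ (not $\phit$ alone), so the correct object to analyze really is $g(\la)\ov{g(\mu)}=\la^{\e(\eta)}\ov{\phit(\la,\eta)}\ov{\mu^{\e(\eta)}}\phit(\mu,\eta)$, and verifying that the extraction of $\set{\la,\ov\la}$-type constraints goes through requires that $|g(\la)\ov{g(\mu)}-(\mu/\la)^{\e(\eta)}|=|\la^{\e(\eta)}\ov{\phit(\la,\eta)}\phit(\mu,\eta)-\mu^{\e(\eta)}|$, which is immediate once one multiplies through by the unit $\mu^{-\e(\eta)}\phit(\mu,\eta)$ of modulus one. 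Once the algebra is organized this way, the rest is a verbatim repetition of the $\mu=\pm1,\pm i$ case analysis already carried out in Lemma~\ref{lem3.3}.
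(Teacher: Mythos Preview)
Your proposal is correct and follows essentially the same route as the paper's proof: both apply the inequalities \eqref{lem3.7.1} first at $\mu=\pm1$ to get a two-element constraint, define $\et(\eta)$ from the value at $\la=i$, then apply \eqref{lem3.7.1} at $\mu=\pm i$ to obtain a second constraint whose intersection with the first is a singleton for $\la\neq\pm1$, and finally handle $\la=\pm1$ via \eqref{lem2.6.1}. Your auxiliary function $g(\la)=\la^{\e(\eta)}\ov{\phit(\la,\eta)}$ is just a notational repackaging of the paper's quantity $\la^{\e(\eta)}\ov{\phit(\la,\eta)}\phit(1,\eta)$ (note that after multiplying through by a unimodular factor the inequality becomes $|g(\la)\ov{g(\mu)}-1|\leq|\la-\mu|$, not $|g(\la)\ov{g(\mu)}-(\mu/\la)^{\e(\eta)}|\leq|\la-\mu|$, but this does not affect the argument and you have already flagged the bookkeeping as the only delicate point).
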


\begin{proof}
Let us fix $\eta\in\MT$ and $\la\in\T\setminus\set{\pm1}$ arbitrarily.
According to \eqref{lem3.7.1} with $\mu=1$, we get
$|\la^{\e(\eta)}\ov{\phit(\la,\eta)}\phit(1,\eta)\pm1|\leq|\la\pm1|$.
These two inequalities imply
$\la^{\e(\eta)}\ov{\phit(\la,\eta)}\phit(1,\eta)\in\set{\la,\ov{\la}}$,
that is,
\[
\ov{\phit(\la,\eta)}\phit(1,\eta)\in\set{\la^{1-\e(\eta)},\la^{-1-\e(\eta)}}.
\]
Entering $\la=i$ into the above to obtain
$\ov{\phit(i,\eta)}\phit(1,\eta)\in\set{\pm\e(\eta)}$.
Then there exists $\et(\eta)\in\set{\pm1}$ such that
$\ov{\phit(i,\eta)}\phit(1,\eta)=\et(\eta)\e(\eta)$.
Since $\eta\in\MT$ is arbitrary, we may, and do, regard $\et$ as a map
from $\MT$ to $\set{\pm1}$.
Considering the case in which $\mu=i$ in \eqref{lem3.7.1}, we get 
$$\begin{aligned}
	|\la^{\e(\eta)}\ov{\phit(\la,\eta)}\phit(i,\eta)-\e(\eta)i|& \leq|\la-i|,
	\q\mbox{and} \\
	|\la^{\e(\eta)}\ov{\phit(\la,\eta)}\phit(i,\eta)+\e(\eta)i|&\leq|\la+i|.
\end{aligned}
$$
By substituting $\phit(i,\eta)$ with $\et(\eta)\e(\eta)\phit(1,\eta)$ in the above
two inequalities, we have 
$$\begin{aligned}
	|\la^{\e(\eta)}\ov{\phit(\la,\eta)}\et(\eta)\phit(1,\eta)-i|& \leq|\la-i|,
	\q\mbox{and}\\
	|\la^{\e(\eta)}\ov{\phit(\la,\eta)}\et(\eta)\phit(1,\eta)+i|& \leq|\la+i|,
\end{aligned}
$$ which combined imply that
$\la^{\e(\eta)}\ov{\phit(\la,\eta)}\et(\eta)\phit(1,\eta)\in\set{\la,-\ov{\la}}$,
leading us to
\[
\ov{\phit(\la,\eta)}\phit(1,\eta)\in
\set{\et(\eta)\la^{1-\e(\eta)},-\et(\eta)\la^{-1-\e(\eta)}}
\cap
\set{\la^{1-\e(\eta)},\la^{-1-\e(\eta)}}.
\]
We have two possible cases to consider.
If $\et(\eta)=1$, then
\[
\ov{\phit(\la,\eta)}\phit(1,\eta)\in
\set{\la^{1-\e(\eta)},\la^{-1-\e(\eta)}}\cap\set{\la^{1-\e(\eta)},-\la^{-1-\e(\eta)}}.
\]
Since $\la\neq\pm1$, we must have 
$\ov{\phit(\la,\eta)}\phit(1,\eta)=\la^{1-\e(\eta)}$.
If $\et(\eta)=-1$, then $\ov{\phit(\la,\eta)}\phit(1,\eta)=\la^{-1-\e(\eta)}$
by a quite similar argument as above.
These conclusions prove that
$\ov{\phit(\la,\eta)}\phit(1,\eta)=\la^{\et(\eta)-\e(\eta)}$
for $\la\in\T\setminus\set{\pm1}$. The last equality holds even for $\la\in\set{\pm1}$ by \eqref{lem2.6.1} and what we have just proved. Thus, the map $\et$ satisfies $\phit(\la,\eta)=\la^{\e(\eta)-\et(\eta)}\phit(1,\eta)$
for all $\la\in\T$ and $\eta\in\MT$.
\end{proof}

\section{Characterizations of $\con\circ\del$ and $\der\circ\del$}

Let $\la\in\T$ and $\eta\in\MT$.
By \eqref{lem3.1.1}, $T(\la V_\eta)=\al(\la,\eta)V_{\phi(\la,\eta)}$, and thus
$T(\tf)(\phi(\la,\eta))=\al(\la,\eta)$ for $\ta\in\la V_\eta$. By applying Lemma~\ref{lem3.3} with the notation of $\al(\eta)=\al(1,\eta)$,
we have
\begin{equation}\label{(4.1)}
\al(\la,\eta)=\la^{\e(\eta)}\al(\eta)
\end{equation}
for $\la\in\T$ and $\eta\in\MT$.
We shall write $\phio(\la,(x,z))=\phio(x)$ for $\la\in\T$ and $(x,z)\in\MT$,
which is appropriate by Lemmata~\ref{lem3.2} and \ref{lem3.6}.
Then $\phio$ is a surjective map from $\cha$ onto itself.
Since $T(\tf)=\t{\del(a)}$ by \eqref{T}, we can rewrite 
the identity $T(\ta)(\phi(\la,\eta))=\al(\la,\eta)$ as
\begin{equation}\label{lem4.1.1}
\con(\del(a))+\der(\del(a))(\phio(x))\phit(\la,\eta)=\al(\la,\eta)
\end{equation}
for all $\la\in\T$, $\eta=(x,z)\in\M\times\T$ and $a\in\SAo$ with $\ta\in\la V_\eta$.
We shall also write $\phit(1,\eta)=\phit(\eta)$.
By Lemma~\ref{lem3.8}, we get
\begin{equation}\label{(4.3)}
\phit(\la,\eta)=\la^{\e(\eta)-\et(\eta)}\phit(\eta)
\end{equation}
for each $\la\in\T$ and $\eta\in\MT$.
If we apply \eqref{(4.1)} and \eqref{(4.3)}
to \eqref{lem4.1.1}, then we obtain
\begin{equation}\label{lem4.1.2}
\con(\del(a))+\der(\del(a))(\phio(x))\la^{\e(\eta)-\et(\eta)}\phit(\eta)
=\la^{\e(\eta)}\al(\eta)
\end{equation}
for $\la\in\T$, $\eta=(x,z)\in\M\times\T$ and $a\in\SAo$ satisfying
$\ta\in\la V_\eta$.
Under the above notations, we can rewrite \eqref{lem3.3.2} as
\begin{equation}\label{lem4.1.3}
T(V_\eta)=\al(\eta)V_{(\phio(x),\phit(\eta))}
\end{equation}
for all $\eta=(x,z)\in\MT$.

\begin{lem}\label{lem4.1}
Let $\la_0\in\T$, $x\in \cha$. Suppose that $\con(\del(\la_0\unitI))=0$. Then $\der(\del(\la_0\INT(f))) (\phi_1(x))=0,$ for every $f\in S_{A}$ with $|f(x)| =1$.
\end{lem}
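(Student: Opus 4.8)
The plan is to feed the two unit-sphere vectors $\la_0\unitI$ and $\la_0\INT(f)$ into the master identity \eqref{lem4.1.2}, while letting the circle coordinate of $\eta$ run over all of $\T$. First I would record the two vectors. By hypothesis $(\ref{(6)})$ we have $\la_0\unitI\in\SAo$, and since $\der(\unitI)=0$ and $\con(\unitI)=1$, formula \eqref{tilde} gives $\widetilde{\la_0\unitI}\equiv\la_0$ on $X\times\T$; in particular $\widetilde{\la_0\unitI}\in\la_0 V_{(x,z)}$ for every $z\in\T$. On the other hand, hypotheses $(\ref{(1)})$ and $(\ref{(4)})$ yield $\der(\INT(f))=f$ and $\con(\INT(f))=0$, so $\la_0\INT(f)\in\SAo$ and $\widetilde{\la_0\INT(f)}(y,\nu)=\la_0 f(y)\nu$ by \eqref{tilde}. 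Writing $w:=f(x)\in\T$ (here the assumption $|f(x)|=1$ is essential), the value $\la_0 w z$ lies in $\T$, whence $\widetilde{\la_0\INT(f)}\in(\la_0 wz)V_{(x,z)}$ for every $z\in\T$.

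Fix $z\in\T$ and set $\eta=(x,z)\in\MT$. Applying \eqref{lem4.1.2} to $a=\la_0\unitI$ with $\la=\la_0$, and invoking the hypothesis $\con(\del(\la_0\unitI))=0$, I would obtain, writing $D_e:=\der(\del(\la_0\unitI))(\phio(x))$,
\[
D_e\,\la_0^{\e(\eta)-\et(\eta)}\phit(\eta)=\la_0^{\e(\eta)}\al(\eta),
\q\mbox{so}\q
\al(\eta)=D_e\,\la_0^{-\et(\eta)}\phit(\eta);
\]
since $\al(\eta),\phit(\eta)\in\T$, this already forces $|D_e|=1$. Next I apply \eqref{lem4.1.2} to $a=\la_0\INT(f)$ with $\la=\la_0 w z$ and substitute the expression just found for $\al(\eta)$, so as to eliminate it. Setting $C_f:=\con(\del(\la_0\INT(f)))$ and $D_f:=\der(\del(\la_0\INT(f)))(\phio(x))$, a routine rearrangement collapses the identity to
\[
C_f=\la_0^{\e(\eta)-\et(\eta)}\phit(\eta)\,(wz)^{\e(\eta)-\et(\eta)}\bigl(D_e\,(wz)^{\et(\eta)}-D_f\bigr),
\]
and taking moduli (all prefactors being unimodular) leaves
\[
|C_f|=\bigl|D_e\,(wz)^{\et(\eta)}-D_f\bigr|\qquad(z\in\T).
\]

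The decisive step is to read off $D_f=0$ from this one-parameter family of equalities. Suppose $D_f\neq0$. By $(\ref{(5)})$ we have $|C_f|=1-\Vinf{\der(\del(\la_0\INT(f)))}\le 1-|D_f|$, while the reverse triangle inequality together with $|D_e|=1$ and $|D_f|\le 1$ gives $|C_f|=|D_e(wz)^{\et(\eta)}-D_f|\ge 1-|D_f|$. Hence equality holds in $|D_e(wz)^{\et(\eta)}-D_f|=|D_e(wz)^{\et(\eta)}|-|D_f|$, which forces $D_f$ and $D_e(wz)^{\et(\eta)}$ to point in the same direction, i.e. $(wz)^{\et(\eta)}=\ov{D_e}\,D_f/|D_f|$, a quantity independent of $z$. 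But $\et$ is $\set{\pm1}$-valued, so on the infinite set $\T$ it attains some value $k$ at two distinct points $z_1\neq z_2$, and then $(wz_1)^{k}=(wz_2)^{k}$ forces $z_1=z_2$, a contradiction. Therefore $D_f=\der(\del(\la_0\INT(f)))(\phio(x))=0$, as claimed. The only genuinely delicate point is the idea underlying the whole argument: promoting the single relation \eqref{lem4.1.2} to a family indexed by the circle coordinate of $\eta$, and then observing that a unimodular expression would otherwise have to be simultaneously constant and non-constant.
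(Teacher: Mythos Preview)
Your proof is correct and tracks the paper's argument almost exactly: both apply \eqref{lem4.1.2} to $\la_0\unitI$ and to $a_0=\la_0\INT(f)$ at $\eta=(x,z)$, substitute the expression for $\al(\eta)$ obtained from the first into the second, and arrive at the key identity
\[
|\con(\del(a_0))|=\bigl|(wz)^{\et(x,z)}-\ov{D_e}\,D_f\bigr|\qquad(z\in\T),
\]
with $D_e=\der(\del(\la_0\unitI))(\phio(x))$, $D_f=\der(\del(a_0))(\phio(x))$ and $w=f(x)$.

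The only divergence is in the endgame. The paper specialises $z$ to $\ov{w},-\ov{w},i\ov{w}$, so that $(wz)^{\et(x,z)}$ hits $1$, $-1$, and one of $\pm i$; a point of $\C$ equidistant from these three must be $0$, whence $D_f=0$. You instead bring in the norm identity $|\con(\del(a_0))|+\Vinf{\der(\del(a_0))}=1$ to squeeze $|\con(\del(a_0))|=1-|D_f|$, force equality in the reverse triangle inequality, and then use a pigeonhole argument on the $\{\pm1\}$-valued map $z\mapsto\et(x,z)$ to contradict the constancy of $(wz)^{\et(x,z)}$. Both routes are short and valid; the paper's is slightly more self-contained (it needs only the modulus identity and elementary plane geometry), whereas yours draws on hypothesis $(\ref{(5)})$ but avoids having to spot the right three test points.
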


\begin{proof} Set $a_0=\la_0\INT(f)$, where $f$ is a function satisfying the hypothesis of the lemma. By hypotheses $(\ref{(1)})$ and $(\ref{(4)})$ in Theorem~\ref{thm2}, we have $\der(a_0)=\la_0 f$ and $\con(a_0)=0$. Let us denote $\mu=f(x)$. Setting $\eta_z=(x,z)$ for each $z\in\T$, we observe that $\t{a_0}\in\la_0 \mu z V_{\eta_z}$
by \eqref{tilde}.
It follows from \eqref{lem4.1.2} that
\begin{equation}\label{lem4.1.4}
\begin{aligned}
	\con(\del(a_0))&+\der(\del(a_0))
	(\phio(x))(\la_0\mu z)^{\e(\eta_z)-\et(\eta_z)}\phit(\eta_z)\\
	&= \widetilde{\del(a_0)} ( \phi (\la_0 \mu z,\eta_z)) =(\la_0\mu z)^{\e(\eta_z)}\al(\eta_z).
\end{aligned}
\end{equation}
The identity $\te=1$ on $X\times \mathbb{T}$ implies that $\la_0\te\in\la_0V_{\eta_z}$. It follows from our assumption that $\con(\del(\la_0 e))=0,$ and hence
\eqref{lem4.1.2} with $\la_0\t{e}\in\la_0V_{\eta_z}$
assure us that
\[
\der(\del(\la_0\unitI))(\phio(x))\la_0^{-\et(\eta_z)}\phit(\eta_z)=\al(\eta_z).
\]
Set $h(x)=\der(\del(\la_0\unitI))(\phio(x))$, and then the last equality writes as
\[
h(x)\la_0^{-\et(\eta_z)}\phit(\eta_z)=\al(\eta_z).
\]
Applying the above equality to \eqref{lem4.1.4}, we obtain
$$\begin{aligned}
 &\left[(\la_0\mu z)^{\e(\eta_z)}h(x)\la_0^{-\et(\eta_z)}
-\der(\del(a_0))(\phio(x))(\la_0\mu z)^{\e(\eta_z)-\et(\eta_z)}\right]\phit(\eta_z) \\
&= \con(\del(a_0)) =(\la_0\mu z)^{\e(\eta_z)-\et(\eta_z)}
\left[h(x)(\mu z)^{\et(\eta_z)}
-\der(\del(a_0))(\phio(x))\right]\phit(\eta_z).
\end{aligned}$$
We get $$|\con(\del(a_0))|=|(\mu z)^{\et(\eta_z)}
-\ov{h(x)}\,\der(\del( a_0))(\phio(x))|,$$ since $|h(x)|=|\la_0^{\et(\eta_z)}\ov{\phit(\eta_z)}\al(\eta_z)|=1$.
Noting that $z\in\T$ was arbitrarily chosen, we obtain
$$|\con(\del(a_0))|=|\nu-\ov{h(x)}\,\der(\del(a_0))(\phio(x))|$$ for all   $\nu\in\set{\pm1, i^{\et(\eta_{i \ov{\mu}})} }$. We derive from $\et(\eta_{i \ov{\mu}})\in\set{\pm1}$ that
$$\ov{h(x)}\,\der(\del(a_0))(\phio(x))=0, \hbox{ equivalently, } \der(\del( a_0))(\phio(x))=0.\eqno\qedhere$$
\end{proof}

Our next result also is an important tool in our argument.
But in the statement we need an additional
assumption for $\cha$ (see Remark~\ref{rem2} below).

\begin{lem}\label{lem4.2}
If $\cha$ contains at least two points,
then $\con(\del(\la e))$ is a non-zero scalar for all $\la\in\T$.
\end{lem}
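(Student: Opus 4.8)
The plan is to argue by contradiction: suppose there is $\la_0\in\T$ with $\con(\del(\la_0 e))=0$ and extract a contradiction from the presence of two distinct points in $\cha$. First I would pin down the image $\del(\la_0 e)$. Every $b\in B$ decomposes as $b=\con(b)e+\INT(\der(b))$, because the difference lies in $\ker\der\cap\ker\con$, which is $\{0\}$ by hypotheses $(\ref{(5)})$ and $(\ref{(6)})$ (so in particular $\ker\der=\C e$). Writing $g:=\der(\del(\la_0 e))\in A$, the vanishing $\con(\del(\la_0 e))=0$ gives $\del(\la_0 e)=\INT(g)$. Running the computation inside the proof of Lemma~\ref{lem4.1} with $\la_0 e$ (whose associated function is the constant $\te\equiv1$) shows $|g(\phio(x))|=1$ for every $x\in\cha$; since $\phio\colon\cha\to\cha$ is surjective, $|g|\equiv1$ on $\cha$, and in particular $\VB{\del(\la_0 e)}=\Vinf{g}=1$, i.e. $g\in\SA$.

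Next I would exploit Lemma~\ref{lem4.1} for a whole family of functions. Let $\mathcal U=\{\,h\in\SA:|h|\equiv1\text{ on }\cha\,\}$; note $g,-g\in\mathcal U$. For $h\in\mathcal U$ and every $x\in\cha$ we have $|h(x)|=1$, so Lemma~\ref{lem4.1} yields $\der(\del(\la_0\INT(h)))(\phio(x))=0$; surjectivity of $\phio$ and the fact that $\cha$ norms $A$ force $\der(\del(\la_0\INT(h)))=0$, whence $\del(\la_0\INT(h))=\gamma_h e$ for a unique $\gamma_h\in\T$ (using again the decomposition above and $\VB{\del(\la_0\INT(h))}=1$). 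Because $\del$ is an injective isometry and $\INT$ is injective, $h\mapsto\gamma_h$ is a well-defined injection of $\mathcal U$ into $\T$, and for $h,h'\in\mathcal U$ the isometry of $\del$ together with $\con\circ\INT=0$ gives $\Vinf{h-h'}=\VB{\INT(h-h')}=\VB{\del(\la_0\INT(h))-\del(\la_0\INT(h'))}=|\gamma_h-\gamma_{h'}|$. Thus $h\mapsto\gamma_h$ is an isometric embedding of $(\mathcal U,\Vinf{\cdot})$ into the unit circle with its chordal metric. Taking $h'=\pm g$ and noting $\Vinf{g-(-g)}=2=|\gamma_g-\gamma_{-g}|$ forces $\gamma_{-g}=-\gamma_g$, so the parallelogram law on $\T$ produces the rigid identity $\Vinf{h-g}^2+\Vinf{h+g}^2=4$ for every $h\in\mathcal U$.

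Finally I would contradict this identity using that $\cha$ has at least two points. Fix distinct $x_1,x_2\in\cha$; since $g(x_1),g(x_2)\in\T$, the goal is to produce $h\in\mathcal U$ that at one Choquet point is (nearly) $-g$ and at the other is (nearly) $+g$, so that both $\Vinf{h-g}$ and $\Vinf{h+g}$ are close to $2$ and hence $\Vinf{h-g}^2+\Vinf{h+g}^2>4$, violating the identity. Extreme C-regularity of $A$, applied at $x_1$ and $x_2$ with small $\varepsilon$ and disjoint neighbourhoods, supplies the peak functions from which one builds such an $h$ as a correction of $g$. The main obstacle is precisely this construction: Lemma~\ref{lem4.1} can be invoked only for functions that are \emph{exactly} unimodular on all of $\cha$, whereas a peak-function correction of $g$ is unimodular only at the prescribed points and merely approximately elsewhere. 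Making this rigorous—either by producing a genuine $h\in\mathcal U$ with the required two-point behaviour (equivalently, showing that once $\cha$ has two points the modulus-one class $\mathcal U$ is strictly larger than the orbit $\T g$), or by a limiting argument that controls the error $\der(\del(\la_0\INT(h)))$ as $\varepsilon\to0$—is the crux, and it is the step that genuinely needs the two-point hypothesis: for a one-point $\cha$ the space $B$ reduces to $\C\oplus^{\ell_1}\C$, where a coordinate swap makes $\con(\del(\la_0 e))=0$ possible, cf. Remark~\ref{rem2}.
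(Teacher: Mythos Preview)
Your overall strategy is sound and, once repaired, is essentially a repackaging of the paper's argument. The gap you flag at the end is real as you state it, but it stems from an unnecessary restriction you imposed: you work only with the class $\mathcal U=\{h\in\SA:|h|\equiv1\text{ on }\cha\}$, and producing a non-scalar element of $\mathcal U$ from mere extreme C-regularity is indeed unclear in general. The fix is that unimodularity on \emph{all} of $\cha$ is not needed. If $h\in\SA$ satisfies $|h(x_0)|=1$ at a \emph{single} point $x_0\in\cha$, then Lemma~\ref{lem4.1} gives $\der(\del(\la_0\INT(h)))(\phio(x_0))=0$; feeding $\widetilde{\la_0\INT(h)}\in(\la_0 h(x_0) z)V_{(x_0,z)}$ into \eqref{lem4.1.1} then yields $\con(\del(\la_0\INT(h)))=\al(\la_0 h(x_0) z,(x_0,z))\in\T$, whence $\Vinf{\der(\del(\la_0\INT(h)))}=0$ and $\del(\la_0\INT(h))=\gamma_h e$. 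So your isometric embedding $h\mapsto\gamma_h$ is defined on the much larger set $\mathcal V=\{h\in\SA:\exists\,x_0\in\cha,\ |h(x_0)|=1\}$. Now two distinct points $x_1,x_2\in\cha$ and extreme C-regularity immediately give $f_1,f_2\in\mathcal V$ with $f_1(x_1)=1$, $|f_1(x_2)|<1/2$, $f_2(x_2)=1$, which are not scalar multiples; since $\T f_1\subset\mathcal V$ already surjects isometrically onto $\T$, the existence of $f_2\in\mathcal V\setminus\T f_1$ contradicts injectivity. This closes your argument without any limiting procedure.

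The paper's proof is the same mechanism spelled out concretely rather than through the embedding language: it takes $a_0=\la_0\INT(f_0)$ and $a_1=\la_0\INT((f_0+f_2)/2)$, both peaking at $x_1$, shows via the same two-step (Lemma~\ref{lem4.1} then \eqref{lem4.1.1}) that $\del(a_0),\del(a_1)\in\T e$ with the \emph{same} value $\al(\la_0 z,(x_1,z))$, and contradicts $\VB{a_0-a_1}=1/2$. Your formulation has the merit of isolating the structural reason (an isometric copy of $\T$ cannot properly contain another), while the paper's version avoids invoking $g$ or $\mathcal U$ at all.
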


\begin{proof}
Suppose, on the contrary, that $\con(\del(\la_0e))=0$ for some $\la_0\in\T$. Take $x_1,x_2\in\cha$ with $x_1\neq x_2$; this is possible by assumption. Since $A$ is extremely C-regular, there exists $f_0\in\SA$ such that $f_0(x_1)=1 =f_0(x_2)$ (cf. \cite[Proposition~5.4]{hat3}). Set $a_0=\la_0\INT({f_0})$. Lemma~\ref{lem4.1} assures that $\der(\del(a_0))\left(\phi_1(x_j)\right)=0$ for all $j=1,2$. By the choice of $a_0$, we see that $\t{a_0}\in\la_0 z V_{(x_1,z)}$ for all $z\in\T$. Since $\der(\del(a_0))\left(\phi_1(x_j)\right)=0$, we get
	\begin{equation}\label{lem4.2.0new}
		\con(\del(a_0))=\al(\la_0 z,(x_1,z))
	\end{equation} for all $z\in\T$ by \eqref{lem4.1.1}. In particular, 
\begin{equation}\label{lem4.2.1}
\al(\la_0,(x_1,1))=\al(-\la_0,(x_1,-1)), \hbox{ and } |\con(\del(a_0))|=1.
 \end{equation} Let us observe that $$1 = \|\del(a_0)\|_{B} = |\con(\del(a_0))| + \|\der(\del(a_0))\|_{\infty},$$ which proves that $\|\der(\del(a_0))\|_{\infty} =0$.

By applying once again that $A$ is extremely C-regular, there exists $f_1\in\SA$ such that
{$f_1(x_1)=1$ and $f_1(x_2)=-1$} (cf. \cite[Proposition~5.4]{hat3}). We define $f_2=(f_1+{f_0})/2\in \SA$. Then we observe that $f_2(x_1)=1$ and $\Vinf{f_2 - {f_0}}=1$.
Setting $a_1=(a_0+\la_0\INT(f_{2}))/2=(\la_0\INT(f_0)+\la_0\INT(f_2))/2$, we obtain $\con(a_1)=\la_0\con(\INT(f_0 + f_2 ) ) /2=0$ and
$\der(a_1)=\la_0(f_0+f_2)/2$ by hypotheses $(\ref{(1)})$ and $(\ref{(4)})$, and thus $\der(a_1) (x_1) = \la_0$.
This shows that $\t{a_1}\in\la_0 z V_{(x_1,z)}$ for all $z\in\T$. We therefore deduce from \eqref{lem4.1.1}
and \eqref{lem4.2.0new} that
\begin{equation}\label{lem4.2.2}
\con(\del(a_1))+\der(\del(a_1)) (\phio(x_1)) \phit(\la_0 z,(x_1,z)) = \con(\del(a_0))
\end{equation} for all $z\in\T$.
We thus obtain
$$|\con(\del(a_1))-\con(\del(a_0))|
=|\der(\del(a_1))(\phio(x_1))|\leq\Vinf{\der(\del(a_1))},$$
which leads to
\begin{equation}\label{eq norm of derDelta a_1 greater 1/2}
	\begin{aligned}
		2\Vinf{\der(\del(a_1))}
		&\geq
		|\con(\del(a_1))-\con(\del(a_0))|+\Vinf{\der(\del(a_1))}\\
		&=
		|\con(\del(a_1))-\con(\del(a_0))|
		+\Vinf{\der(\del(a_1))-\der(\del(a_0))}\\
		&=
		\VB{\del(a_1)-\del(a_0)}=\VB{a_1-a_0}
		=\fr{1}{2}\VB{\la_0 \INT(f_2- f_0)}\\
		&=
		\fr{1}{2}\left(|\con(\la_0\INT(f_2-f_0))|
		+\Vinf{\der(\la_0\INT(f_2-f_0))}\right)\\
		&=
		\fr{1}{2}\,
		\Vinf{\la_0(f_2-f_0)}
		=
		\fr{1}{2},
	\end{aligned}
\end{equation}
where we have used hypotheses $(\ref{(1)})$ and $(\ref{(4)})$ in Theorem~\ref{thm1}. However, since $(f_0+f_2)/2 (x_1) = 1$, Lemma~\ref{lem4.1} implies that $\der(\del(a_1))\left(\phi_1(x_1)\right)=\der(\del(\la_0\INT(f_0+f_2))/2))\left(\phi_1(x_1)\right)=0$. Furthermore, $\t{a_1}\in\la_0 z V_{(x_1,z)}$ for all $z\in\T$ and $$ \con(\del(a_1))=\al(\la_0 z,(x_1,z))\in \mathbb{T} \hbox{ for all $z\in\T$ by \eqref{lem4.1.1}}$$ and the equality $$1 = \|\del(a_1)\|_{B} = |\con(\del(a_{1}))| + \|\der(\del(a_{1}))\|_{\infty},$$ leads to $\|\der(\del(a_{1}))\|_{\infty} =0,$ which contradicts \eqref{eq norm of derDelta a_1 greater 1/2}. 
\end{proof}

\begin{rem}\label{rem2}
We have not excluded the case in which $\cha$ is a one point set until now.
This extra hypothesis is necessary, since as we shall show next,
Lemma~\ref{lem4.2} need not be true if $\cha
=\set{x_0}$ is a single point.
As a matter of fact, if $\cha=\set{x_0}$, then the point evaluation $\d_{x_0}\colon A\to\C$
is a surjective complex linear isometry.
Thus, we may, and do, regard $A\equiv \C$, and then
$\der$ is a bounded linear functional on $\Ao$. We shall write $1$ for the unit in $\mathbb{C} \equiv A$.
Fix an arbitrary $a\in\Ao$ and set $a_0=\con(a)e+\der(a)\INT({1})\in\Ao$.
Since $\con$ and $\der$ are linear maps satisfying hypotheses $(\ref{(1)})$, $(\ref{(4)})$
and $(\ref{(6)})$ in Theorem \ref{thm1}, we get
$\con(a_0)=\con(a)$ and $\der(a_0)=\der(a)$.
We thus obtain
\[
\V{a_0-a}_B=|\con(a_0)-\con(a)|+|\der(a_0)-\der(a)|=0,
\]
and hence $a_0=a$.
This shows that $a=\con(a)e+\der(a)\INT({1})$ for all $a\in\Ao$.
Let $\delo\colon\Ao\to\Ao$ be the mapping, defined by
$\delo(a)=\der(a)e+\con(a)\INT({1})$ for all $a\in\Ao$.
We see that $\con(\delo(a))=\der(a)$ and $\der(\delo(a))=\con(a)$
for each $a\in\Ao$ by hypotheses $(\ref{(1)})$, $(\ref{(4)})$ and $(\ref{(6)})$.
These two identities combined with $(\ref{(5)})$ show that $\delo$ is an isometry
on $B$.
Take $b\in\Ao$ arbitrarily.
Then $b=\con(b)e+\der(b)\INT({1})$ as proved above.
Setting $a_1=\der(b)e+\con(b)\INT({1})$,
we infer from hypotheses $(\ref{(1)})$, $(\ref{(4)})$ and $(\ref{(6)})$ that
$\con(a_1)=\der(b)$ and $\der(a_1)=\con(b)$.
Therefore, $\delo(a_1)=\der(a_1)e+\con(a_1)\INT({1})
=\con(b)e+\der(b)\INT({1})=b,$ and consequently,
$\delo$ is a surjective isometry on $B$.
For each $\la\in\T$, we have
$\delo(\la e)=\der(\la e)e+\con(\la e)\INT({1})=\la\,\INT({1})$
by the linearity of $\con$ and $\der$ and hypotheses $(\ref{(1)})$, $(\ref{(4)})$
and $(\ref{(6)})$.
Thus, $\con(\delo(\la e))=0$ by $(\ref{(4)})$, and therefore,
Lemma~\ref{lem4.2} does not hold for the surjective isometry
$\delo|_{\SAo}$ on $\SAo$ with $\cha=\set{x_0}$.

Although Lemma~\ref{lem4.2} does not hold if $\cha$ reduces to one point, it is enough for us to consider the case in which $\cha$ contains at least two points.
In fact, when $\cha$ contains just one point, we identify $A$ with $\C$ and define $\mathcal{U}\colon\Ao\to\ell_1^{(2)}$by $\mathcal{U}(a)=(\con(a),\der(a))$ for each $a\in\Ao$, where $\ell_1^{(2)}$ denotes the 2-dimensional $\ell_1$ space. Then $\mathcal{U}$ is a complex linear map from $B$ to $\ell_1^{(2)}$.
For each $(\la,\mu)\in\C^2$, set $b=\la e+\mu\,\INT(1)\in\Ao$.
We derive from hypotheses $(\ref{(1)})$, $(\ref{(4)})$ and $(\ref{(6)})$ that
$\con(b)=\la$ and $\der(b)=\mu$.
Hence, $\mathcal{U}(b)=(\la,\mu)$, which shows that
$\mathcal{U}$ is surjective.
We see from hypothesis $(\ref{(5)})$ that
$\mathcal{U}$ is an isometric isomorphism
form $(B,\V{\cdot}_B)$ onto $\ell_1^{(2)}.$ 
Tingley's problem for $\ell_1^{(2)}$, and hence
for $(B,\V{\cdot}_B)$, was solved as a special case of some results (see, for example, \cite{wan2} or \cite{Ding2004ell11}). Consequently, we can exclude the case in which $\cha$ reduces to a single point.
\end{rem}

In the rest of this paper, we assume that $\cha$  contains at least two
points, and thus Lemma~\ref{lem4.2} is valid. We can now complete the information
concerning the maps $\al$ and $\e$.

\begin{lem}\label{lem4.3}
The mappings $\al$ and $\e$ are constant on $\MT$.
\end{lem}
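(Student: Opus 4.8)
The plan is to exploit that, since $\te\equiv 1$ on $X\times\T$, for each fixed $\la_0\in\T$ the constant function $\widetilde{\la_0 e}\equiv\la_0$ belongs to the maximal convex set $\la_0 V_\eta$ \emph{for every} $\eta\in\MT$. A single element thus encodes the value $\al(\la_0,\eta)$ for all $\eta$ at once, and this shared membership is what will force constancy.

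First I would fix $\la_0\in\T$ and record that $\la_0 e\in\SAo$ by hypothesis $(\ref{(6)})$, while $\widetilde{\la_0 e}=\la_0$ lies in $\la_0 V_\eta$ for all $\eta$ by \eqref{tilde} and \eqref{V}. Using \eqref{T} and \eqref{lem3.1.1}, its image $T(\widetilde{\la_0 e})=\widetilde{\del(\la_0 e)}$ satisfies $\widetilde{\del(\la_0 e)}(\phi(\la_0,\eta))=\al(\la_0,\eta)$, so $\widetilde{\del(\la_0 e)}$ attains modulus $1$ at each point $\phi(\la_0,\eta)=(\phio(x),\phit(\la_0,\eta))$. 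Writing $c=\con(\del(\la_0 e))$ and $g=\der(\del(\la_0 e))$, definition \eqref{tilde} gives $\widetilde{\del(\la_0 e)}(x',z')=c+g(x')z'$, while \eqref{norm} together with $\del(\la_0 e)\in\SAo$ yields $|c|+\Vinf{g}=1$. The crucial input is Lemma~\ref{lem4.2}, available here because $\cha$ has at least two points, which guarantees $c\neq 0$.

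The heart of the argument is then an equality-in-the-triangle-inequality computation. From the chain
\[
1=|\al(\la_0,\eta)|=|c+g(\phio(x))\phit(\la_0,\eta)|\leq|c|+|g(\phio(x))|\leq|c|+\Vinf{g}=1,
\]
every inequality must be an equality; since $\phit(\la_0,\eta)\in\T$ and $\phio(x)\in\cha$, this pins down $g(\phio(x))\phit(\la_0,\eta)=\Vinf{g}\,c/|c|$ and hence $\al(\la_0,\eta)=c+\Vinf{g}\,c/|c|=c/|c|$, a quantity independent of $\eta$. I expect this to be the main obstacle; the only delicate point is the use of equality in the triangle inequality when $g$ might vanish, which is harmless since then $|c|=1$ and $\al(\la_0,\eta)=c$.

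Finally I would specialise $\la_0$. Taking $\la_0=1$ gives $\al(\eta)=\al(1,\eta)=\con(\del(e))/|\con(\del(e))|$, a constant, so $\al$ is constant on $\MT$. Taking $\la_0=i$ shows that $\al(i,\eta)=\con(\del(ie))/|\con(\del(ie))|$ is likewise constant; but by \eqref{(4.1)} one has $\al(i,\eta)=i^{\e(\eta)}\al(\eta)$, so $i^{\e(\eta)}$ must be constant in $\eta$. Since $\e(\eta)\in\set{\pm1}$ and $i\neq i^{-1}$, this is possible only when $\e$ is constant, which completes the proof.
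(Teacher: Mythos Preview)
Your proof is correct and follows essentially the same approach as the paper's: both exploit that $\widetilde{\la_0 e}\equiv\la_0$ lies in every $\la_0 V_\eta$, apply the equality case of the triangle inequality (using Lemma~\ref{lem4.2} to ensure $\con(\del(\la_0 e))\neq 0$) to deduce $\al(\la_0,\eta)=\con(\del(\la_0 e))/|\con(\del(\la_0 e))|$, and then specialize to $\la_0=1$ and $\la_0=i$ to force $\al$ and $\e$ constant. The only differences are cosmetic (your shorthand $c,g$ and your explicit mention of the harmless case $g=0$).
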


\begin{proof}
Fix arbitrary $\la\in\T$ and $\eta=(x,z)\in\M\times\T$.
Since $\la\t{e}\in\la V_\eta$, we derive from \eqref{lem4.1.1} that
\begin{align*}
1&=|\al(\la,\eta)|
=|\con(\del(\la\unitI))+\der(\del(\la\unitI))(\phio(x))\phit(\la,\eta)|\\
&\leq|
\con(\del(\la\unitI))|+|\der(\del(\la\unitI))(\phio(x))|\leq\Vs{\del(\la\unitI)}=1,
\end{align*}
and thus we get
\[
|\con(\del(\la\unitI))+\der(\del(\la\unitI))(\phio(x))\phit(\la,\eta)|=1
=|\con(\del(\la\unitI))|+|\der(\del(\la\unitI))(\phio(x))|.
\]
Since, by Lemma~\ref{lem4.2}, $\con(\del(\la\unitI))\neq0,$ we deduce that
\begin{equation}\label{lem4.3.1}
\der(\del(\la\unitI))(\phio(x))\phit(\la,\eta)=t\con(\del(\la\unitI))
\end{equation}
for some $t\geq0$.
The above equality shows that
\[
|t\con(\del(\la\unitI))|=|\der(\del(\la\unitI))(\phio(x))|=1-|\con(\del(\la\unitI))|,
\]
which yields $(1+t)|\con(\del(\la\unitI))|=1$.
It follows from \eqref{(4.1)}, \eqref{lem4.1.1} and \eqref{lem4.3.1} that
\begin{align*}
\la^{\e(\eta)}\al(\eta)
&=
\con(\del(\la\unitI))+\der(\del(\la\unitI))(\phio(x))\phit(\la,\eta)\\
&=
(1+t)\con(\del(\la\unitI))
=\fr{\con(\del(\la\unitI))}{|\con(\del(\la\unitI))|}.
\end{align*}
Letting $\la=1,i$ in the above equalities, we get
$\al(\eta)=\con(\del(\unitI))/|\con(\del(\unitI))|$ and
$i^{\e(\eta)}\al(\eta)=\con(\del(i\unitI))/|\con(\del(i\unitI))|$.
Since $\eta\in\MT$ is arbitrary, we have proved that
$\al$ and $\e$ are both constant maps on $\MT$.
\end{proof}

By Lemma~\ref{lem4.3}, we may, and do, write
$\al(\eta)=\al$ and $\e(\eta)=\e$ for all $\eta\in\MT$.
Under the light of this notation and \eqref{(4.3)}  we can rewrite \eqref{lem4.1.2} as
\begin{equation}\label{lem4.3.2}
\con(\del(a))+\der(\del(a))(\phio(x))\phit(\la,\eta)
=\la^{\e}\al
\end{equation}
for all $\la\in\T$, $\eta=(x,z)\in\M\times\T$ and $a\in\SAo$
with $\ta\in\la V_\eta$.

\begin{lem}\label{lem4.4}
If $x\in\M$, $\la\in\T$ and $a\in\SAo$ satisfy $\der(a)(x)=\la$,
then $\con(\del(a))=0,$ $\Vinf{\der(\del(a))}=1$, and
\begin{equation}\label{lem4.4.1}
\der(\del(a))(\phio(x))\phit(\la z,(x,z))=(\la z)^{\e}\al
\end{equation}
for all $z\in\T$.
\end{lem}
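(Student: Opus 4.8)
The plan is to feed the single element $a$ into the master identity \eqref{lem4.3.2} for every $z\in\T$ and then exploit the freedom in the parameter $z$. First I would pin down the two norm quantities attached to $a$ itself: since $|\der(a)(x)|=|\la|=1$ and $a\in\SAo$, hypothesis $(\ref{(5)})$ forces
\[
1=\VB{a}=|\con(a)|+\Vinf{\der(a)}\ge |\con(a)|+|\der(a)(x)|=|\con(a)|+1,
\]
so that $\con(a)=0$ and $\Vinf{\der(a)}=1$. By \eqref{tilde} this gives $\ta(x,z)=\der(a)(x)z=\la z$ for every $z\in\T$, whence $\ta\in(\la z)V_{(x,z)}$ by \eqref{V}. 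Applying \eqref{lem4.3.2} with the pair $(\la z,(x,z))$ in the role of $(\la,\eta)$ then yields
\begin{equation}\label{eq:planstar}
\con(\del(a))+\der(\del(a))(\phio(x))\,\phit(\la z,(x,z))=(\la z)^{\e}\al\qquad(z\in\T).
\end{equation}

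Next I would extract the modular information. Writing $C=\con(\del(a))$ and $d=\der(\del(a))(\phio(x))$, and using $\del(a)\in\SAo$ together with $(\ref{(5)})$ so that $|C|+\Vinf{\der(\del(a))}=\VB{\del(a)}=1$, taking absolute values in \eqref{eq:planstar} gives, for each $z$,
\[
1=|(\la z)^{\e}\al|\le|C|+|d|\le|C|+\Vinf{\der(\del(a))}=1.
\]
Hence all inequalities are equalities: $|d|=\Vinf{\der(\del(a))}$, and the triangle equality $|C+d\,\phit(\la z,(x,z))|=|C|+|d|$ holds for every $z\in\T$ (note $\phit(\la z,(x,z))\in\T$, so $|d\,\phit|=|d|$).

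The crux is to deduce $C=0$, and I would argue by contradiction; the key observation is that the left-hand side of \eqref{eq:planstar} would be forced constant in $z$, whereas the right-hand side is not, since $(\la z)^{\e}\al$ runs over all of $\T$ as $z$ does. If $d=0$, then $\Vinf{\der(\del(a))}=0$ and $|C|=1$, so \eqref{eq:planstar} reads $C=(\la z)^{\e}\al$ for all $z$, which is impossible for the constant $C$. If instead $C\neq0$ and $d\neq0$, the attained triangle equality forces $d\,\phit(\la z,(x,z))$ to be the fixed positive multiple $(|d|/|C|)\,C$ of $C$; substituting this into \eqref{eq:planstar} makes the left-hand side equal to the constant $C(1+|d|/|C|)=C/|C|$ (using $|C|+|d|=1$), again contradicting that $(\la z)^{\e}\al$ is non-constant in $z$. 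Therefore $C=\con(\del(a))=0$, and consequently $\Vinf{\der(\del(a))}=1-|C|=1$. Putting $C=0$ back into \eqref{eq:planstar} yields exactly \eqref{lem4.4.1}. The only genuinely delicate point is this final dichotomy, and it is settled purely by playing the $z$-independence of $C$ and $d$ against the free rotation parameter $z$ on the right-hand side.
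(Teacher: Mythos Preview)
Your proof is correct and follows essentially the same line as the paper's: both start by observing $\con(a)=0$ so that $\ta\in(\la z)V_{(x,z)}$ for every $z$, apply \eqref{lem4.3.2} to get \eqref{eq:planstar}, and then use the equality case of the triangle inequality together with the free parameter $z$ to force $\con(\del(a))=0$. The only cosmetic difference is that the paper writes $\con(\del(a))=s\,\der(\del(a))(\phio(x))\phit(\la z,(x,z))$ for some $s\ge0$ and then derives the closed form $\con(\del(a))=(1-\Vinf{\der(\del(a))})(\la z)^{\e}\al$, whereas you split into the two cases $d=0$ and $(C\neq0,\,d\neq0)$ and reach the same contradiction with the non-constancy of $(\la z)^{\e}\al$; these are two presentations of the same argument.
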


\begin{proof}
Let $z\in\T$.
Since $1=|\la|\leq\|\der(a)\|_\infty \le \|a\|_\Ao = 1$, it follows that $\con(a) = 0$. 
Thus we have $\ta\in\la zV_{(x,z)}.$ 
We derive from \eqref{lem4.3.2} that
\begin{equation}\label{lem4.4.2}
\con(\del(a))+\der(\del(a))(\phio(x))\phit(\la z,(x,z))=(\la z)^{\e}\al.
\end{equation}
Since $z\in\T$ is arbitrary, we must have $\Vinf{\der(\del(a))}\neq0$
by the above equality.
The triangle inequality 
shows that
\begin{align*}
1
&=
|\con(\del(a))+\der(\del(a))(\phio(x))\phit(\la z,(x,z))|\\
&\leq
|\con(\del(a))|+|\der(\del(a))(\phio(x))| \\
&\leq
|\con(\del(a))|+ \Vinf{\der(\del(a))} =\Vs{\del(a)}=1.
\end{align*}
These inequalities show that
$|\der(\del(a))(\phio(x))|=\Vinf{\der(\del(a))}\neq0$, and 
\begin{equation}\label{lem4.4.3}
\con(\del(a))=s\der(\del(a))(\phio(x))\phit(\la z,(x,z))
\end{equation}
for some $s\geq0$.
Then we have
\[
(1+s)\der(\del(a))(\phio(x))\phit(\la z,(x,z))=(\la z)^{\e}\al
\]
by \eqref{lem4.4.2}.
Taking the modulus on both sides of the above equality,
we obtain $(1+s)\Vinf{\der(\del(a))}=1$,
that is, $s\Vinf{\der(\del(a))}=1-\Vinf{\der(\del(a))}$.
We deduce from these equalities that
\[
\der(\del(a))(\phio(x))\phit(\la z,(x,z))
=\Vinf{\der(\del(a))}(\la z)^{\e}\al.
\]
Plugging the last identity into \eqref{lem4.4.3} we deduce that
\[
\con(\del(a))=s\Vinf{\der(\del(a))}(\la z)^{\e}\al=(1-\Vinf{\der(\del(a))})(\la z)^{\e}\al.
\]
Having in mind that $z\in\T$ is arbitrary, we have
$1-\Vinf{\der(\del(a))}=0=\con(\del(a))$.
Equality \eqref{lem4.4.2} shows that
$\der(\del(a))(\phio(x))\phit(\la z,(x,z))=(\la z)^{\e}\al$
for all $z\in\T$.
\end{proof}

Our next goal will consist in proving that the map $\phit(\la,(x,z))$ can be expressed as a product of single variable maps with $\la,z\in\T$ and $x\in\cha$.

\begin{lem}\label{lem4.5}
We set $\et(x)=\et(x,1)$ for each $x\in\cha$.
Then
\[
\phit(\la,(x,z))=\la^{\e-\et(x)}\phit(x,1)z^{\et(x)}
\]
for all $\la\in\T$ and $(x,z)\in\MT$.
\end{lem}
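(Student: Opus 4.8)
The plan is to combine Lemma~\ref{lem3.8} with the key identity of Lemma~\ref{lem4.4}, using that $\e$ is constant by Lemma~\ref{lem4.3}. By Lemma~\ref{lem3.8} and \eqref{(4.3)} we already know $\phit(\la,(x,z))=\la^{\e-\et(x,z)}\phit(x,z)$, so it suffices to establish the two separate facts that $\et(x,z)=\et(x,1)=:\et(x)$ for every $z\in\T$, and that $\phit(x,z)=\phit(x,1)\,z^{\et(x)}$. Both will be extracted from Lemma~\ref{lem4.4} by probing with test functions supplied by the extreme C-regularity of $A$.

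First I would fix $x\in\cha$ and, for each $\la\in\T$, choose $f\in\SA$ with $f(x)=1$ and set $a_\la=\INT(\la f)\in\SAo$, so that $\con(a_\la)=0$ and $\der(a_\la)(x)=\la$ by hypotheses~$(\ref{(1)})$ and $(\ref{(4)})$. Applying Lemma~\ref{lem4.4} to $a_\la$ and substituting the expression $\phit(\la z,(x,z))=(\la z)^{\e-\et(x,z)}\phit(x,z)$ coming from \eqref{(4.3)} into \eqref{lem4.4.1}, the factor $(\la z)^{\e}$ cancels and I obtain
\[
\der(\del(a_\la))(\phio(x))\,\phit(x,z)=(\la z)^{\et(x,z)}\al \qquad (z\in\T).
\]
Evaluating this at $z=1$ solves for the unknown constant $\der(\del(a_\la))(\phio(x))=\la^{\et(x,1)}\al\,\ov{\phit(x,1)}$ (of modulus $1$), and feeding it back eliminates the constant entirely, leaving $\la^{\et(x,1)}\ov{\phit(x,1)}\phit(x,z)=(\la z)^{\et(x,z)}$.

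Taking $\la=1$ in this relation yields $\phit(x,z)=\phit(x,1)\,z^{\et(x,z)}$, which is the second claim once the exponent is identified. To pin down the exponent I would keep $\la$ arbitrary: substituting $\ov{\phit(x,1)}\phit(x,z)=z^{\et(x,z)}$ from the $\la=1$ case, the general relation collapses to $\la^{\et(x,1)}=\la^{\et(x,z)}$ for all $\la\in\T$; choosing $\la=i$ and recalling that $\et$ takes values in $\set{\pm1}$ forces $\et(x,z)=\et(x,1)=\et(x)$. Hence $\phit(x,z)=\phit(x,1)\,z^{\et(x)}$, and plugging this into \eqref{(4.3)} produces the announced formula $\phit(\la,(x,z))=\la^{\e-\et(x)}\phit(x,1)\,z^{\et(x)}$.

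The main obstacle, and the point requiring care, is the bookkeeping with the $z$-dependent sign $\et(x,z)$ together with the auxiliary constant $\der(\del(a_\la))(\phio(x))$: one must first use the $z=1$ evaluation to remove this constant before any comparison of powers of $\la$ and $z$ becomes legitimate, and only afterwards is it safe to separate the $\la$- and $z$-variables so as to conclude that $\et$ is in fact independent of $z$.
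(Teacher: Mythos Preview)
Your argument is correct. Both your proof and the paper's rest on the same ingredients---Lemma~\ref{lem4.4} applied to test elements $\INT(\la f)$ with $f(x)=1$, together with \eqref{(4.3)}---but you organize the computation differently. The paper sets $\mu=\la\ov{z}$ and applies \eqref{lem4.4.1} to the \emph{same} element $\INT(\mu f_0)$ at the two base points $(x,z)$ and $(x,1)$, which immediately yields $\phit(\mu z,(x,z))=\phit(\mu,(x,1))\,z^{\e}$; a single application of \eqref{(4.3)} at $(x,1)$ then finishes, so the paper never needs to know that $\et(x,z)$ is independent of $z$. You instead plug \eqref{(4.3)} into \eqref{lem4.4.1} first, eliminate the unknown $\der(\del(a_\la))(\phio(x))$ via the $z=1$ evaluation, and then separate the $\la$- and $z$-variables to conclude explicitly that $\et(x,z)=\et(x,1)$ before assembling the final formula. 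Your route is slightly longer but has the merit of making the $z$-independence of $\et$ an explicit by-product; the paper's substitution $\mu=\la\ov{z}$ is a shortcut that sidesteps this step entirely.
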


\begin{proof} Take any $\la,z\in\T$ and $x\in\M$. Pick a function $f_0\in \SA$ with $f_0(x) =1$ (we apply here that $A$ is extremely C-regular). Set $\mu=\la\ov{z}$ and $v=\mu f_0\in\SA$. Then we have $\mathcal{I}(v)\in\SAo$ and $(\der\circ\mathcal{I})(v)(x)=\mu$
by hypotheses $(\ref{(1)})$ and $(\ref{(5)})$ in Theorem~\ref{thm2}.
By applying \eqref{lem4.4.1} twice to $a=\mathcal{I}(v)$, we obtain
\begin{align*}
\der(\del(\mathcal{I}(v)))(\phio(x))\phit(\mu z,(x,z))
&=
(\mu z)^{\e}\al
=\mu^{\e}\al\cdot z^{\e}\\
&=
\der(\del(\mathcal{I}(v)))(\phio(x))\phit(\mu,(x,1))z^{\e}.
\end{align*}
The above equalities show that
$|\der(\del(\mathcal{I}(v)))(\phio(x))|=1$,
and hence we have $\phit(\mu z,(x,z))=\phit(\mu,(x,1))z^{\e}$.
By the choice of $\mu$, we get
\[
\phit(\la,(x,z))
=\phit(\la\ov{z},(x,1))z^{\e}.
\]
We derive from Lemma~\ref{lem4.3} combined with \eqref{(4.3)} that
\begin{align*}
\phit(\la,(x,z))
&=
\phit(\la\ov{z},(x,1))z^{\e}
=(\la\ov{z})^{\e-\et(x,1)}\phit(x,1)z^{\e}\\
&=
\la^{\e-\et(x,1)}\phit(x,1)z^{\et(x,1)}.
\end{align*}
We obtain
$\phit(\la,(x,z))=\la^{\e-\et(x)}\phit(x,1)z^{\et(x)}$
with $\et(x,1)=\et(x)$.
\end{proof}

For simplicity of notation, we shall write $\phit(x,1)=\phit(x)$
for each $x\in\cha$. It follows from Lemma~\ref{lem4.5} that \begin{equation}\label{lem4.5.1}
\phit(\la,(x,z))=\la^{\e-\et(x)}\phit(x)z^{\et(x)}
\end{equation}
for all $\la\in\T$ and $(x,z)\in\M\times\T$.
In particular, $\phit(x,z)=\phit(1,(x,z))=\phit(x)z^{\et(x)}$ for each $(x,z)\in\MT$
(see \eqref{(4.3)}).
Equality \eqref{lem4.3.2} is rewritten as
\begin{equation}\label{lem4.5.2}
\con(\del(a))
+\der(\del(a))(\phio(x))\la^{\e-\et(x)}\phit(x)z^{\et(x)}
=\la^{\e}\al
\end{equation}
for all $\la\in\T$, $(x,z)\in\M\times\T$ and $a\in\SAo$
with $\tf\in\la V_{(x,z)}$.

We next complete the concrete description of $\con\circ\del$ and $\der\circ\del$.

\begin{lem}\label{lem4.6}
Let  $\la\in\T$, $(x,z)\in\M\times\T$ and $a\in\SAo$ satisfy $\tf\in\la V_{(x,z)}$. Then
$$\begin{aligned}
\con(\del(a))& =|\con(\del(a))|\la^{\e}\al \q\mbox{ and }\\
\der(\del(a))(\phio(x))
& =\Vinf{\der(\del(a))}\la^{\et(x)}\al\ov{\phit(x)}z^{-\et(x)}.
\end{aligned}$$
In particular,
\begin{equation}\label{lem4.6.1}
|\con(\del(a))|+|\der(\del(a))(\phio(x))|
=|\con(a)|+|\der(a)(x)|
\end{equation}
for all $a\in\SAo$ with $\tf\in\la V_{(x,z)}$.
\end{lem}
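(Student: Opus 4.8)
The plan is to read both formulas directly off the single identity \eqref{lem4.5.2}, namely
$$\con(\del(a))+\der(\del(a))(\phio(x))\,\la^{\e-\et(x)}\phit(x)z^{\et(x)}=\la^{\e}\al,$$
by analysing when its associated triangle inequality is an equality. First I would record that, since $\del$ is a surjective isometry on $\SAo$ and $a\in\SAo$, we have $\del(a)\in\SAo$, so hypothesis $(\ref{(5)})$ gives $|\con(\del(a))|+\Vinf{\der(\del(a))}=1$. Abbreviating the cross term by $w=\der(\del(a))(\phio(x))\,\la^{\e-\et(x)}\phit(x)z^{\et(x)}$ and noting that $\la,\phit(x),z\in\T$ together with $\e,\et(x)\in\set{\pm1}$ force $|w|=|\der(\del(a))(\phio(x))|$, the right-hand side of \eqref{lem4.5.2} is unimodular.

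Next I would sandwich:
$$1=|\la^{\e}\al|=|\con(\del(a))+w|\leq|\con(\del(a))|+|w|\leq|\con(\del(a))|+\Vinf{\der(\del(a))}=1.$$
Equality throughout yields two facts. The second inequality being tight gives $|\der(\del(a))(\phio(x))|=|w|=\Vinf{\der(\del(a))}$. The first inequality being tight forces the summands $\con(\del(a))$ and $w$ to be non-negative multiples of their sum $\la^{\e}\al$; that is, $\con(\del(a))=|\con(\del(a))|\la^{\e}\al$ and $w=|w|\la^{\e}\al=\Vinf{\der(\del(a))}\la^{\e}\al$. The first displayed equality of the lemma is now immediate. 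For the second, I would solve $w=\Vinf{\der(\del(a))}\la^{\e}\al$ for $\der(\del(a))(\phio(x))$ by multiplying through by the inverses $\la^{\et(x)-\e}$, $\ov{\phit(x)}$, $z^{-\et(x)}$ (all legitimate, the factors lying in $\T$), which produces $\der(\del(a))(\phio(x))=\Vinf{\der(\del(a))}\la^{\et(x)}\al\ov{\phit(x)}z^{-\et(x)}$, exactly as claimed.

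Finally, for the ``in particular'' identity \eqref{lem4.6.1} I would evaluate both sides. The left-hand side equals $|\con(\del(a))|+\Vinf{\der(\del(a))}=1$ by the equality case above. For the right-hand side, since $\tf\in\la V_{(x,z)}$, Proposition~\ref{prop3.4} gives $\ov{\la}\der(a)(x)z=\Vinf{\der(a)}$, whence $|\der(a)(x)|=\Vinf{\der(a)}$; as $a\in\SAo$, hypothesis $(\ref{(5)})$ then yields $|\con(a)|+|\der(a)(x)|=|\con(a)|+\Vinf{\der(a)}=1$. Both sides equal $1$, so \eqref{lem4.6.1} holds. The argument is essentially forced once \eqref{lem4.5.2} is available, so I do not expect a genuine obstacle; the only point demanding care is the equality analysis of the triangle inequality, namely extracting the common phase $\la^{\e}\al$ and then correctly inverting the three unimodular factors when isolating $\der(\del(a))(\phio(x))$.
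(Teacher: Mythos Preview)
Your proof is correct and follows essentially the same approach as the paper: both arguments read the two displayed formulas directly from \eqref{lem4.5.2} via the equality case of the triangle inequality. The only cosmetic differences are that the paper splits into the cases $\con(\del(a))=0$ and $\con(\del(a))\neq0$ when extracting the common phase (whereas you handle both at once by noting the summands must be non-negative multiples of their unimodular sum), and for the right-hand side of \eqref{lem4.6.1} the paper computes $|\con(a)|+|\der(a)(x)|=1$ directly from $\ta(x,z)=\la$ rather than invoking Proposition~\ref{prop3.4}.
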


\begin{proof}
We notice that \eqref{lem4.5.2} is valid by assumption.
Then we obtain
\begin{align}\label{lem4.6.2}
1
&\leq
|\con(\del(a))|+|\der(\del(a))(\phio(x))\la^{\e-\et(x)}\phit(x)z^{\et(x)}|\\
&\leq
|\con(\del(a))|+\Vinf{\der(\del(a))}=\Vs{\del(a)}=1.
\notag
\end{align}
We have $|\der(\del(a))(\phio(x))|=\Vinf{\der(\del(a))}$
by the above inequalities.

We first consider the case in which $\con(\del(a))=0$.
Then the identity $\con(\del(a))=|\con(\del(a))|\la^{\e}\al$ is obvious,
and thus $\Vinf{\der(\del(a))}=\Vs{\del(a)}=1$. Then we have
$\Vinf{\der(\del(a))}\la^{\et(x)}\al\ov{\phit(x)}z^{-\et(x)}
=\der(\del(a))(\phio(x))$
by \eqref{lem4.5.2}.

Now we consider the case in which $\con(\del(a))\neq0$.
Since equalities hold in \eqref{lem4.6.2}, there exists $s\geq0$ such that
$\der(\del(a))(\phio(x))\la^{\e-\et(x)}\phit(x)z^{\et(x)}=s\con(\del(a))$.
By applying the last equality to \eqref{lem4.5.2}, we have
$(1+s)\con(\del(a))=\la^{\e}\al$, and thus $(1+s)|\con(\del(a))|=1$.
Then $\con(\del(a))=|\con(\del(a))|\la^{\e}\al$ is valid even if $\con(\del(a))\neq0$.
We deduce from \eqref{lem4.5.2} combined with
$|\con(\del(a))|+\Vinf{\der(\del(a))}=1$
that
$$\begin{aligned}
\der(\del(a))(\phio(x))\la^{\e-\et(x)}\phit(x)z^{\et(x)}
=\la^{\e}\al-\con(\del(a))\\
=(1-|\con(\del(a))|)\la^{\e}\al
=\Vinf{\der(\del(a))}\la^{\e}\al.
\end{aligned}$$
Consequently,
$\der(\del(a))(\phio(x))
=\Vinf{\der(\del(a))}\la^{\et(x)}\al\ov{\phit(x)}z^{-\et(x)}$.

We shall finally  prove \eqref{lem4.6.1}.
Having in mind that $\tf\in\la V_{(x,z)}$, we have
\[
1=|\la|=|\con(a)+\der(a)(x)z|\leq|\con(a)|+|\der(a)(x)|
\leq\VB{a}=1.
\]
It follows from \eqref{lem4.6.2} that
$|\con(\del(a))|+|\der(\del(a))(\phio(x))|=1=|\con(a)|+|\der(a)(x)|$.
\end{proof}

Let $\la\in\T$ and $x\in\M$.
We define a subset $\la P_x$ of $\SA$ by
\[
\la P_x=\set{f\in\SA:f(x)=\la}.
\] Clearly $P_x$ is non-empty by the assumptions on $A$.

Our next result is a generalization of \cite[Lemma~2.17]{cue}
for extremely C-regular spaces.

\begin{lem}\label{lem4.7}
For each $x_0\in\M$ and $a\in\SAo$,
let $\la\in\T$ be such that $\der(a)(x_0)=|\der(a)(x_0)|\la$.
Then for each $t\in\R$ with $0<t<1$, and $f_0\in  P_{x_0}$ there exists 
$g_t\in P_{x_0}$ such that
$$ |t\con(a)|\la f_0 +t\der(a)+\left(1-|t\con(a)|-|t\der(a)(x_0)|\right) \la g_t \in\la P_{x_0}.$$
\end{lem}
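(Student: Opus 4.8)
The plan is to observe first that the displayed element---call it $h_t$---takes the value $\la$ at $x_0$ \emph{for every} choice of $g_t\in P_{x_0}$, so that the entire content of the lemma is the existence of some $g_t$ making $\Vinf{h_t}\le1$. Put $c=|\con(a)|$, $d=\Vinf{\der(a)}$ (so $c+d=\VB{a}=1$, as $a\in\SAo$), $d_0=|\der(a)(x_0)|\le d$, and $r=1-tc-td_0$. Using $f_0(x_0)=g_t(x_0)=1$ and $\der(a)(x_0)=|\der(a)(x_0)|\la=d_0\la$, a one-line substitution gives
\[
h_t(x_0)=tc\,\la+td_0\,\la+(1-tc-td_0)\la=\la .
\]
Since $c+d_0\le c+d=1$ and $0<t<1$, one has $r\ge1-t>0$, and $|\la|=1$ forces $\Vinf{h_t}\ge1$; because $h_t\in A$ (a complex-linear combination of $f_0,\der(a),g_t\in A$), it therefore suffices to find $g_t\in P_{x_0}$ with $\Vinf{h_t}\le1$, whence $h_t\in\la P_{x_0}$.

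Next I would factor out $\la$ and set $w=tc\,\la f_0+t\,\der(a)\in A$, so that $h_t=w+r\,\la g_t$ and $|w(y)|\le tc+td=t<1$ for all $y$; the goal becomes $|w(y)+r\la g_t(y)|\le1$ pointwise. Two facts dictate the shape of $g_t$: at $x_0$ we have $w(x_0)=t(c+d_0)\la$, so the forced value $g_t(x_0)=1$ already yields $|w(x_0)+r\la|=t(c+d_0)+r=1$; whereas taking $\la g_t(y)=-w(y)/|w(y)|$ (opposite to $w$) gives $\big|\,|w(y)|-r\,\big|<1$. The crude estimate $\Vinf{h_t}\le1+t(d-d_0)$ thus fails only through moduli adding at points where $|\der(a)|$ exceeds its value $d_0$ at $x_0$, and the remedy is phase cancellation: $g_t$ should behave like the constant $1$ near $x_0$ (where $w$ is almost the positive multiple $t(c+d_0)\la$ of $\la$) and should rotate so as to oppose $w$ away from $x_0$.

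To realize such a $g_t$ \emph{inside} $A$ I would use extreme $C$-regularity. Fix a neighbourhood $O$ of $x_0$ on which, by continuity of $f_0$ and $\der(a)$, the function $w$ stays within a prescribed distance of $t(c+d_0)\la$; using peaking functions in the refined form of \cite[Proposition~5.4 and Example~5.2]{hat3} (which also allow prescribing values at auxiliary points), build $g_t\in\SA$ with $g_t(x_0)=1$ that is aligned with $\la$ on $O$ and damped in modulus off $O$. Off $O$ this gives $|w+r\la g_t|\le t+r\varepsilon<1$, and on $O$ it gives a bound of the form $1+o(1)$ as $O$ shrinks.

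The hard part will be upgrading this \emph{approximate} control to the \emph{exact} inequality $\Vinf{h_t}\le1$: since $h_t(x_0)=\la$ is pinned at modulus $1$, there is no slack to absorb the $o(1)$ error near $x_0$. I expect to remove it by not taking a bare peak but slaving the phase of $\la g_t$ to $w$ on $O$---that is, selecting, continuously in $y$ and with value $1$ at $x_0$, a point $g_t(y)$ of the nonempty convex set $\set{\zeta:|\zeta|\le1,\ |w(y)+r\la\zeta|\le1}$ (which contains $1$ at $x_0$ by the tangency above, and contains $0$ everywhere since $|w|<1$)---and then producing this selection within the subspace $A$ by the extreme $C$-regularity interpolation, exactly as in \cite[Lemma~2.17]{cue} but now for a general extremely $C$-regular $A$. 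That subspace-realization of an exact selection is the crux; the value computation and the modulus estimates above are routine once it is available.
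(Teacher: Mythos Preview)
Your setup is correct: $h_t(x_0)=\la$ holds automatically, $r=1-t|\con(a)|-t|\der(a)(x_0)|\ge 1-t>0$, and the whole lemma reduces to manufacturing $g_t\in P_{x_0}$ with $\Vinf{h_t}\le1$. But the construction you outline has a genuine gap, and it is exactly the one you flag yourself as ``the crux.'' Extreme C-regularity gives you, for each closed set $K\subset X$ missing $x_0$, a function in $P_{x_0}$ that is \emph{small in modulus} on $K$; the interpolation result you cite from \cite{hat3} lets you prescribe values at \emph{finitely many} Choquet-boundary points. Neither tool lets you prescribe or even approximately control the \emph{phase} of $g_t$ on an open set, nor does it let you realize inside $A$ a continuous selection from the set-valued map $y\mapsto\{\zeta:|\zeta|\le1,\ |w(y)+r\la\zeta|\le1\}$. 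Since $A$ is only a closed linear subspace of $C(X)$ (not an algebra, no functional calculus), there is no mechanism available to force $\la g_t$ to ``rotate opposite to $w$'' away from $x_0$. Your approximate bound $1+o(1)$ near $x_0$ is therefore not upgraded to an exact bound, and the proof is incomplete.

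The paper's argument avoids phase considerations entirely and works purely with moduli, which is why it succeeds for general extremely C-regular subspaces. It slices $X$ into dyadic level sets of $|t\der(a)-t\der(a)(x_0)|$: a ``far'' set $G_0=\{|t\der(a)-t\der(a)(x_0)|\ge r/4\}$ and annuli $G_m=\{r/2^{m+2}\le|t\der(a)-t\der(a)(x_0)|\le r/2^{m+1}\}$ for $m\ge1$. For each $n\ge1$ it picks $f_n\in P_{x_0}$ with $|f_n|<(1-t)/(2r)$ on $G_0\cup G_n$ (only modulus control, exactly what extreme C-regularity provides) and sets $g_t=\sum_{n\ge1}2^{-n}f_n$. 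Then on $G_0$ one has $|g_t|<(1-t)/r$, giving $|h_t|<1$; on $G_m$ the single term $f_m/2^m$ is small and the remaining mass $r(1-2^{-m})$ plus the bound $|t\der(a)(x)|\le|t\der(a)(x_0)|+r/2^{m+1}$ combine (using $1-r\le t$) to give $|h_t|\le1$; and off all $G_n$ one has $\der(a)(x)=\der(a)(x_0)$, so $|h_t|\le1$ trivially. The point is that the dyadic averaging buys exactly the $r/2^{m+1}$ slack needed on each annulus without ever touching phase---this is the missing idea in your proposal.
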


\begin{proof}
We notice that $1-|t\con(a)|-|t\der(a)(x_0)|\geq1-t>0$,
since $|\con(a)|+|\der(a)(x_0)|\leq\VB{a}=1$.
Setting $r=1-|t\con(a)|-|t\der(a)(x_0)|$, we have $r>0$.
We define
\begin{align*}
G_0
&=
\left\{x\in X:|t\der(a)(x)-t\der(a)(x_0)|\geq\fr{r}{4}\right\},\\
G_m
&=
\left\{x\in X:\fr{r}{2^{m+2}}
\leq|t\der(a)(x)-t\der(a)(x_0)|\leq\fr{r}{2^{m+1}}\right\}
&(m\in\N).
\end{align*}
By the continuity of $\der(a)$, we observe that
$G_n$ is a closed subset of $X$ with $x_0\not\in G_n$ for all
$n\in\N\cup\set{0}$.
Since $A$ is extremely C-regular, for each $n\in\N$ there exists
$f_n\in P_{x_0}$ such that
\begin{equation}\label{lem4.7.1}
|f_n|<\fr{1-t}{2r}
\q\mbox{on $G_0\cup G_n$}.
\end{equation}
We denote by $g_t$ the limit of the convergent series
$\sum_{n=1}^\infty f_n/2^n$.
Having in mind that $f_n\in P_{x_0}$ for each $n\in\N$, we obtain
$1=g_t(x_0)\leq\Vinf{g_t}\leq\sum_{n=1}^\infty\Vinf{f_n}/2^n=1$.
Hence $g_t(x_0)=\Vinf{g_t}=1$, which yields $g_t\in P_{x_0}$.
We set
\[
h_t=|t\con(a)|\la  f_0+t\der(a)+r\la g_t\in A.
\]
We shall prove $h_t\in\la P_{x_0}$.
Since $t\der(a)(x_0)=|t\der(a)(x_0)|\la$, we get
$|t\con(a)|\la+t\der(a)(x_0)=(1-r)\la$.
Now we obtain
\[
h_t(x_0)=|t\con(a)|\la+t\der(a)(x_0)+r\la g_t(x_0)
=(1-r)\la+r\la=\la,
\]
which proves $h_t(x_0)=\la$.
We need to prove that $\Vinf{h_t}=1$.
Fix an arbitrary $x\in X$ to show that $|h_t(x)|\leq1$.
We shall consider three different cases.
First, if $x\in G_0$, then $|f_n(x)|<(1-t)/(2r)$
for each $n\in\N$ by \eqref{lem4.7.1},
which implies $|g_t(x)|<(1-t)/r$ by definition.
Consequently, we have
\[
|h_t(x)|\leq|t\con(a)|+|t\der(a)(x)|+|rg_t(x)|
<\VB{ta}+1-t=1,
\]
and hence $|h_t(x)|<1$ if $x\in G_0$.

Second, we consider the case in which $x\in\cup_{n=1}^\infty G_n$,
and then $x\in G_m$ for some $m\in\N$.
By the choice of $G_m$, we have $|t\der(a)(x)-t\der(a)(x_0)|\leq r/2^{m+1}$,
and thus $|t\der(a)(x)|\leq|t\der(a)(x_0)|+r/2^{m+1}$.
Applying \eqref{lem4.7.1}, we obtain
\[
|r\la g_t(x)|\leq
r\left(\fr{|f_m(x)|}{2^m}
+\sum_{n\neq m}\fr{|f_n(x)|}{2^n}\right)
\leq\fr{1-t}{2^{m+1}}+r\left(1-\fr{1}{2^m}\right).
\]
By the definition of $r$, we can write
$1-r=|t\con(a)|+|t\der(a)(x_0)|\leq t$, which leads to
\begin{align*}
|h_t(x)|
&\leq
|t\con(a)|+|t\der(a)(x)|+|r\la g_t(x)|\\
&\leq
|t\con(a)|+|t\der(a)(x_0)|+\fr{r}{2^{m+1}}
+\fr{1-t}{2^{m+1}}+r\left(1-\fr{1}{2^m}\right)\\
&=
1-r+\fr{1-t}{2^{m+1}}+r\left(1-\fr{1}{2^{m+1}}\right)\\
&=
1+\fr{1-r-t}{2^{m+1}}\leq1.
\end{align*}
We thus obtain $|h_t(x)|\leq1$ for $x\in\cup_{n=1}^\infty G_n$.

Finally, if $x\not\in\cup_{n=0}^\infty G_n$,
then $\der(a)(x)=\der(a)(x_0)$.
Thus, $|h_t(x)|\leq|t\con(a)|+|t\der(a)(x_0)|+r=1$.
We have therefore proved that $|h_t(x)|\leq1$ for all $x\in X$.
\end{proof}

\section{Proof of main results}\label{sect5}

We have already gathered all the tools to prove Theorem~\ref{thm2}. The main idea of the following proof is based on the proof of \cite[Theorem~2.1]{cue}.

\begin{proof}[\bf Proof of Theorem~\ref{thm2}]
Take any $a\in\SAo,$ $x\in\M$ and $f_0\in P_x$.
We can choose $\la\in\T$ so that $\der(a)(x)=|\der(a)(x)|\la$.
Fixing arbitrary $t\in\R$ with $0<t<1$,
we set $r=1-|t\con(a)|-|t\der(a)(x)|$.
Lemma~\ref{lem4.7} assures the existence of an element $g_t\in P_x$
such that $h_t=|t\con(a)|\la  f _0 +t\der(a)+r\la g_t\in\la P_x$.
Then we have
\begin{align*}
\Vinf{h_t-\der(a)}
&=
\Vinf{|t\con(a)|\la {f _0}+(t-1)\der(a)+r\la g_t}\\
&\leq
|t\con(a)|+(1-t)\Vinf{\der(a)}+1-|t\con(a)|-|t\der(a)(x)|\\
&=
(1-t)\Vinf{\der(a)}+1-|t\der(a)(x)|.
\end{align*}
Having in mind that $h_t\in\la P_x$, we derive from hypotheses $(\ref{(1)})$ and $(\ref{(4)})$ that $\der(\INT(h_t))(x)=h_t(x)=\la$
and $\con(\INT(h_t))=0$.
This implies that $\t{\INT(h_t)}\in\la V_{(x,1)}$ by \eqref{tilde}.
We apply Lemma~\ref{lem4.4} 
to the element $\INT(h_t)$,
then we get $\con(\del(\INT(h_t)))=0$.
Setting $\zeta=\phio(x)$, we infer from \eqref{lem4.5.2} that
\[
\der(\del(\mathcal{I}(h_t)))(\zeta)=\der(\del(\mathcal{I}(h_t)))(\phio(x))
=\la^{\et(x)}\al\ov{\phit(x)}.
\]
By combining these equalities we get
\begin{align*}
1-|\der(\del(a))(\zeta)|
&=
|\la^{\et(x)}\al\ov{\phit(x)}|-|\der(\del(a))(\zeta)|\\
&\leq
|\la^{\et(x)}\al\ov{\phit(x)}-\der(\del(a))(\zeta)|\\
&=
|\der(\del(\INT(h_t)))(\zeta)-\der(\del(a))(\zeta)|\\
&\leq
\Vinf{\der(\del(\INT(h_t)))-\der(\del(a))}\\
&=
\Vs{\del(\INT(h_t))-\del(a)}-|\con(\del(a))|\\
&=
\Vs{\INT(h_t)-a}-|\con(\del(a))|\\
&=
|\con(a)|+\Vinf{h_t-\der(a)}-|\con(\del(a))|\\
&\leq
|\con(a)|+(1-t)\Vinf{\der(a)}+1-|t\der(a)(x)|-|\con(\del(a))|,
\end{align*}
where we have used that $\con(\del(\INT(h_t)))=0=\con(\INT(h_t))$
and $\del$ is an isometry with respect to $\VB{\cdot}$.
Taking $t\to1$ in the above inequalities, we get
\begin{equation}\label{proof1}
\begin{aligned}
1-|\der(\del(a))(\zeta)|
&\leq|\la^{\et(x)}\al\ov{\phit(x)}-\der(\del(a))(\zeta)|\\
&\leq|\con(a)|+1-|\der(a)(x)|-|\con(\del(a))|.
\end{aligned}
\end{equation}
Having in mind that $\zeta=\phio(x)$, we especially have
\begin{equation}\label{proof2}
|\con(\del(a))|-|\der(\del(a))(\phio(x))|\leq|\con(a)|-|\der(a)(x)|.
\end{equation}
Now we take $x_0\in\cha$ so that $|\der(a)(x_0)|=\Vinf{\der(a)}$.
Since $x\in\cha$ was arbitrary, we obtain
\begin{equation}\label{proof3}
|\con(\del(a))|-|\der(\del(a))(\phio(x_0))|\leq|\con(a)|-\Vinf{\der(a)}.
\end{equation}
Let $\mu,z\in\T$ be such that $\con(a)=|\con(a)|\mu$ and
$\der(a)(x_0)=|\der(a)(x_0)|z=\Vinf{\der(a)}z$.
We have $\ta\in\mu V_{(x_0,\ov{z}\mu)}$, since
\[
\con(a)+\der(a)(x_0)\ov{z}\mu = \ta (x_0,\ov{z}\mu)
=(|\con(a)|+\Vinf{\der(a)})\mu=\Vs{a}\cdot\mu=\mu.
\]
We can apply \eqref{lem4.6.1} to $a\in\SAo$, and then we have
\[
|\con(\del(a))|+|\der(\del(a))(\phio(x_0))|=|\con(a)|+\Vinf{\der(a)}.
\]
Adding the last equality to \eqref{proof3}, we get
$|\con(\del(a))|\leq|\con(a)|$.
Since $\del$ is a surjective isometry, we see that $\deli$ enjoys the same property we proved for  $\del$, and thus, $|\con(\deli(b))|\leq|\con(b)|$ for all $b\in\SAo$, which, particularly, yields $|\con(a)|\leq|\con(\del(a))|$.
Consequently, we obtain $|\con(\del(a))|=|\con(a)|$, and then
we derive from \eqref{proof2} that
\begin{equation}\label{proof4}
|\der(a)(x)|\leq|\der(\del(a))(\phio(x))|.
\end{equation}
For each $\nu\in\C$, we define $[\nu]^\varepsilon=\nu$
if $\varepsilon=1$ and $[\nu]^\varepsilon=\ov{\nu}$ if $\varepsilon=-1$.
Noting that $\ta\in\mu V_{(x_0,\ov{z}\mu)}$,
we deduce from Lemma~\ref{lem4.6} that
$\con(\del(a))=|\con(\del(a))|\mu^{\e}\al=|\con(a)|\mu^{\e}\al
=[\con(a)]^{\e}\al$, that is,
\begin{equation}\label{proof5}
\con(\del(a))=[\con(a)]^{\e}\al.
\end{equation}

We shall next show that $\phio$ is injective.
Let $x_1,x_2\in\cha$, and assume that $\phio(x_1)=\phio(x_2)$. Having in mind \cite[Proposition~5.4]{hat3}, we can find $f_1\in \SA$ such that $f_1(x_j)=1$ for all $j=1,2$.
Setting $a_1=\mathcal{I}(f_1)$, we see that 
$\der(a_1)(x_j)=1$ for $j=1,2$ by $(\ref{(1)})$.
Lemma~\ref{lem4.4} shows that
$\con(\del(a_1))=0$.
Since $\con(a_1)=0$ by $(\ref{(4)})$,
we obtain $\t{a_1}\in V_{(x_j,1)}$
for $j=1,2$.
By applying \eqref{lem4.5.2} with $\la=1=z$,
we have $$\der(\del(a_1))(\phio(x_j)) \phit(x_j) =\al \hbox{ for } j=1,2.$$
The last equality shows that
$\der(\del(a_1))(\phio(x_j))\neq0$, which yields $\phit(x_1)  = \phit(x_2),$ since $\phio (x_1) = \phio (x_2)$.  Note that $T(V_{(x_j,1)})
=\al V_{(\phio(x_j),\phit(x_j))}$
 (cf. \eqref{lem4.1.3} and Lemma~\ref{lem4.3}), which consequently implies $$T(V_{(x_1,1)})
 =\al V_{(\phio(x_1),\phit(x_1))} 
 =\al V_{(\phio(x_2),\phit(x_2))}= T(V_{(x_2,1)}),$$ equivalently, $V_{(x_1,1)}=V_{(x_2,1)}$. It follows that  $x_1=x_2$ by Lemma~\ref{lem2.1},
and consequently $\phio$ is injective.

Since $\phio$ is bijective, we can apply the arguments
leading to \eqref{proof4} to $\deli$ and $\phio^{-1}$ instead of
$\del$ and $\phio$.
Then we obtain
$|\der(b)(y)|\leq|\der(\deli(b))(\phio^{-1}(y))|$
for $b\in\SAo$ and $y\in\cha$.
Taking $b=\del(a)$ and $y=\phio(x)$, we conclude
$|\der(\del(a))(\phio(x))|\leq|\der(a)(x)|$, which yields
$$|\der(\del(a))(\zeta)|=|\der(\del(a))(\phio(x))|=|\der(a)(x)|.$$

Since $|\con(\del(a))|=|\con(a)|$, we derive from
\eqref{proof1} that
\[
|\la^{\et(x)}\al\ov{\phit(x)}-\der(\del(a))(\zeta)|+|\der(\del(a))(\zeta)|=1.
\]
This implies that equality holds in a triangle inequality.
Hence, there exists $s\geq0$ such that
$\der(\del(a))(\zeta)=s\la^{\et(x)}\al\ov{\phit(x)}$.
Then
$s=|s\la^{\et(x)}\al\ov{\phit(x)}|=|\der(\del(a))(\zeta)|=|\der(a)(x)|$.
Having in mind that $\der(a)(x)=|\der(a)(x)|\la$, we obtain
$\der(\del(a))(\zeta)
=|\der(a)(x)|\la^{\et(x)}\al\ov{\phit(x)}
=[\der(a)(x)]^{\et(x)}\al\ov{\phit(x)}$,
and hence, by the arbitrariness of $a \in \SAo$ and $x\in\cha,$ we conclude that 
\begin{equation}\label{proof6}
\der(\del(a))(\phio(x))
=\al\ov{\phit(x)}\,[\der(a)(x)]^{\et(x)}
\end{equation} for all $a \in \SAo$ and $x\in\cha$.

Finally, we shall prove that $\del\colon\SAo\to\SAo$ extends to a surjective real linear isometry $\del_0\colon\Ao\to\Ao$ via the standard positive homogeneous extension.
Let us consider this positive homogeneous extension
$\del_0\colon\Ao\to\Ao$ defined by
\[
\del_0(b)=
\begin{cases}
\D\Vs{b}\,\del\left(\fr{b}{\Vs{b}}\right), &b\in\Ao\setminus\set{0},\\[2mm]
0, &b=0.
\end{cases}
\]
Since $\con$ is a linear map, we derive from \eqref{proof5} that
\[
\con(\del_0(b))
=\Vs{b}\,\con\left(\del\left(\fr{b}{\Vs{b}}\right)\right)
=\Vs{b}\left[\con\left(\fr{b}{\Vs{b}}\right)\right]^{\e}\al
=[\con(b)]^{\e}\al
\]
for all $b\in B\sm\set{0}$.
Having in mind that $\con(\del_0(0))=\con(0)=0$, we have
$\con(\del_0(b))=[\con(b)]^{\e}\al$ for all $b\in B$. By a quite similar argument, we obtain from  \eqref{proof6} that 
$\der(\del_0(b))(\phio(x))=\al\ov{\phit(x)}[\der(b)(x)]^{\et(x)}$
for all $b\in B$ and $x\in\cha$. By combining all the previous conclusions we arrive at 
$$\begin{aligned}
\Vs{\del_0(b_1)-\del_0(b_2)} &=  |\con(\del_0(b_1))-\con(\del_0(b_2))| \\
&+ \| \der(\del_0(b_1))-\der(\del_0(b_2)) \|_{\infty} \\
&= |\con(\del_0(b_1))-\con(\del_0(b_2))| \\
&+\sup_{x\in\M}|\der(\del_0(b_1))(\phio(x))-\der(\del_0(b_2))(\phio(x))|\\
&=
|\con(b_1)-\con(b_2)|\\
&+\sup_{x\in\M}|\der(b_1)(x)-\der(b_2)(x)|
=\Vs{b_1-b_2}
\end{aligned}$$ for all $b_1,b_2\in\Ao$, where we have used that
$\cha$ is a boundary for $A$ and $\phio$ satisfies $\phio(\cha)=\cha$. This shows that $\del_0$ is an isometry on $\Ao$ with respect to the norm $\VB{\cdot}$ (and obviously surjective). The Mazur--Ulam theorem \cite{maz} implies that $\del_0$ is real linear.
Therefore, $\del_0$ is a surjective real linear isometry extending $\del$ to $\Ao$.
\end{proof}

\begin{proof}[\bf Proof of Theorem~\ref{thm1}]  We assume that we are now under the hypotheses of Theorem~\ref{thm1}.

Let $\widehat{B}$ be the completion of $B$ with norm $\V{\cdot}_{\widehat{B}}$.
It is part of the folklore of the theory, and routine to check that there exists a unique surjective isometry
$\widehat{\del}\colon S_{\widehat{B}}\to S_{\widehat{B}}$
whose restriction to $\SAo$ is $\del\colon\SAo\to\SAo$.
In addition, we see that $\del$ is extended to a surjective real
linear isometry on $B$ if and only if $\widehat{\del}$ is extended
to a surjective real linear isometry on $\widehat{B}$.
We shall show that the Banach space $\widehat{B}$ satisfies the conditions
in Theorem~\ref{thm2}.

Let $X_\infty$ be the one point compactification of $X$, that is,
$X_\infty=X\cup\set{\infty}$.
We define $f(\infty)=0$ for each $f\in A$.
Then we may, and do, regard $A$ as a subspace of $C(X_\infty)$.
Let $\ov{A}$ be the uniform closure of $A$ in $C(X_\infty)$.
We show that $\ov{A}$ is extremely C-regular.
Let $A^*$ and $\ov{A}^*$ be the dual spaces of $A$ and $\ov{A}$, respectively.
Then the correspondence from $\ov{A}^*$ to $A^*$, defined by $\chi\mapsto\chi|_A$
for each $\chi\in\ov{A}^*$, is an isometric isomorphism.
We observe that the set of all extreme points of the closed unit ball
$\ov{A}^*_1$ coincides with that of $A^*_1$.
Thus, $\d_x$ is an extreme point of $\ov{A}^*_1$ if and only if $\d_x|_A$ is
an extreme point of $A^*_1$ for $x\in X_\infty$.
This implies that $\operatorname{Ch}(\ov{A})=\cha$. We see that $\ov{A}$ is an extremely C-regular subspace of $C(X_\infty)$ since so is $A$.

Note that $A$ and $\Ao$ are dense in $\ov{A}$ and $\widehat{\Ao}$,
respectively.
It is routine to check that there exist unique continuous extensions
$\widehat{\INT}\colon\ov{A}\to\widehat{\Ao}$,
$\widehat{\der}\colon\widehat{\Ao}\to\ov{A}$ and
$\widehat{\con}\colon\widehat{\Ao}\to\C$
of the bounded linear operators
$\INT$, $\der$ and $\con$, respectively.

We shall show that the quintuple $(\ov{A},\widehat{\Ao},\widehat{\INT},
\widehat{\der},\widehat{\con})$ satisfies the hypotheses in Theorem~\ref{thm2}.
Since $(\widehat{\der} \circ \widehat{\INT})(f)=(\der\circ\INT)(f)=f$
for each $f\in A$, we observe that $(\widehat{\der} \circ \widehat{\INT})(h)=h$
for each $h\in\ov{A}$.
Hence, hypothesis $(\ref{(1)})$ in Theorem~\ref{thm2} holds for $\widehat{\der}\circ\widehat{\INT}$.
By a quite similar argument, we see that
$\widehat{\con}\circ\widehat{\INT}$ satisfies hypothesis $(\ref{(4)})$  in Theorem~\ref{thm2}.
By $(\ref{(5)})$, we have $\V{a}_{\widehat{\Ao}}=\V{a}_B=|\con(a)|+\Vinf{\der(a)}
=|\widehat{\con}(a)|+\Vinf{\widehat{\der}(a)}$ for $a\in B$.
We thus obtain
$\V{b}_{\widehat{\Ao}}=|\widehat{\con}(b)|+\Vinf{\widehat{\der}(b)}$ for
all $b\in\widehat{\Ao}$,
and thus $(\widehat{\con},\widehat{\der})$ satisfies hypothesis $(\ref{(5)})$  in Theorem~\ref{thm2}.
By assumption, there exists $e\in\ker\der$
such that $\con(e)=\V{e}_B=1$.
By the definition of $\widehat{\der}$ and $\widehat{\con}$, we see that
$e\in\ker\widehat{\der}$ and $\widehat{\con}(e)=\V{e}_{\widehat{\Ao}}=1$.
We conclude that the quintuple
$(\ov{A},\widehat{\Ao},\widehat{\INT},\widehat{\der},\widehat{\con})$ satisfies
the hypotheses $(\ref{(1)})$ through $(\ref{(6)})$
in Theorem~\ref{thm2}.

Theorem~\ref{thm2} now implies that
$\widehat{\del}\colon S_{\widehat{B}}\to S_{\widehat{B}}$ extends to a surjective real linear isometry on $\widehat{B}$. We conclude that each surjective isometry $\del$ on $\SAo$ admits a surjective real linear extension to $\Ao$.
\end{proof}

\section{Examples}\label{sect6}

In this section, we give some examples of complex normed spaces $\Ao$ that satisfy
the hypotheses $(\ref{(1)})$ through $(\ref{(6)})$ in Theorem~\ref{thm1}.
Thus, every surjective isometry $\del\colon\SAo\to\SAo$ admits a surjective real linear isometric extension $\del_0\colon\Ao\to\Ao$ for those $B$'s.

\begin{ex}
Let $B$ be the linear space $\Lipo$ of all Lipschitz continuous
complex-valued functions on $[0,1]$.
We denote by $L(a)$ the Lipschitz constant for $a\in B$, that is,
\[ L(a)=\sup_{\substack{x,y\in[0,1]\\x\neq y}} \left\{\frac{|a(x) - a(y)|}{|x - y|}\right\} < \infty.
\]
Fix an arbitrary $c \in [0,1]$, and define a norm on $B$ by
\[
\VB{a}=|a(c)| + L(a) \qquad (a \in B).
\]
Let $L^\infty([0,1])$ be the commutative $C^*$-algebra
of all bounded Lebesgue measurable functions on $[0,1]$.
Denote by $\MIS$ the maximal ideal space of $L^\infty([0,1])$.
We set $A=C(\MIS)$, and then $A$ is an extremely C-regular space.
The Gelfand transformation $\Gamma\colon L^\infty([0,1])\to A$ is
an isometric algebra isomorphism.
It is well-known, by the celebrated Rademacher's theorem, that every $a \in B=\Lipo$ is differentiable almost everywhere
with respect to the Lebesgue measure and that the derivative $a'$ of $a$
belongs to $L^\infty([0,1])$ and satisfies
\[
a(t) = a(c) + \int_c^t a'(x)\,dx
\]
for every $t \in [0,1]$, and $L(a)=\Vinf{a'}$.
Conversely, we see that the function $a_g$, defined by
$a_g(t)=\int_c^tg(x)\,dx$ for each $g\in L^\infty([0,1])$ and
$t\in[0,1]$, satisfies $a_g\in\Lipo$ and $L(a_g)=\Vinf{g}$
(see \cite[Example 1.6.5]{wea}).

We define bounded linear operators $\INT\colon A\to B$,
$\der\colon B\to A$ and $\con\colon B\to\C$ by
\[
\INT(f)(t)=\int_c^t\Gamma^{-1}(f)(x)\,dx
\qq(f\in A,\,t\in[0,1]),
\]
$\der(a)=\Gamma(a')$ and $\con(a)=a(c)$ for $a\in B$, respectively.
We observe that $(\der\circ\INT)(f)=\Gamma(\INT(f)')=f$
and $(\con\circ\INT)(f)=0$ for every $f\in A$.
Hence the conditions $(\ref{(1)})$ and $$(\ref{(4)})$$ in Theorem~\ref{thm1}
are satisfied.
Since $\Gamma$ is an isometry, we obtain
$L(a)=\Vinf{a'}=\Vinf{\Gamma(a')}=\Vinf{\der(a)}$ for each $a\in B$.
This shows that $\VB{a}=|\con(a)|+\Vinf{\der(a)}$ for all $a\in B$,
which proves $(\ref{(5)})$ in Theorem~\ref{thm1}.
Let $\unit$ be the constant function on $[0,1]$, which takes
only the value $1$. Clearly, $\unit\in\ker\der$ and $\con(\unit)=\VB{\unit}=1$.
Consequently, all the hypotheses in Theorem~\ref{thm1} are true for $B=\Lipo$.
\end{ex}

\begin{ex}
Fix any $c\in[0,1]$.
Let $B$ be the complex Banach space $C^1([0,1])$ of all continuously differentiable
complex-valued functions on $[0,1]$ with the norm
\[
\VB{a}=|a(c)|+\Vinf{a'}\qquad (a\in B).
\]
Set $A=C([0,1])$ and define bounded linear operators $\INT\colon A\to B$, $\der\colon B\to A$
and $\con\colon B\to\C$ by 
\[
\INT(f)(t) = \int_c^t f(x)\,dx, \quad 
\der(a) = a', \quad \text{and} \quad 
\con(a) = a(c),
\]
for every $f \in A$, $a\in B$ and $t \in [0,1]$. 
It is easy to check that
the quintuple $(A,B,\INT,\der,\con)$ fulfills
all the conditions in Theorem~\ref{thm1} with $e=\unit$ the unit element in $A$.
\end{ex}

\begin{ex}\label{ex3}
Let $\Di$ be the open unit disk $\{z\in\C:|z|<1\}$.
We denote by $H(\Di)$ the algebra of all analytic functions on $\Di$.
Let $H^\infty(\Di)$ be the commutative Banach algebra
of all bounded analytic functions on $\Di$ with the supremum norm. 
Let $c\in\Di$, and set $B=\set{a\in H(\Di):a'\in H^\infty(\Di)}$ with the norm
\[
\VB{a}=|a(c)|+\Vinf{a'}
\qq(a\in B).
\]
Let $\MIS$ be the maximal ideal space of $H^\infty(\Di)$.
Since the Gelfand transformation $\Gamma\colon H^\infty(\Di)\to C(\MIS)$
is an isometric homomorphism,
we see that $A=\Gamma(H^\infty(\Di))$ is a uniform algebra on $\MIS$.
Hence, $A$ is an extremely C-regular subspace of $C(\MIS)$. For uniform algebra $A$, it is known that $\cha$
is the set of all weak peak points for $A$
(see, for example \cite[Corollary~3.7 and Theorem~3.27]{hat3}).
Then one can check that $A$ is an extremely C-regular closed
subspace of $C(\MIS)$. Define bounded linear operators $\INT\colon A\to B$,
$\der\colon B\to A$ and $\con\colon B\to\C$ by
\[
\INT(f)(z)=\int_{[c,z]}\Gamma^{-1}(f)(\zeta)\,d\zeta, \quad 
\der(a)=\Gamma(a') \quad \text{and} \quad 
\con(a)=a(c)
\]
for every $f\in A$, $a\in B$ and $z\in\Di$.
We see that the  quintuple $(A,B,\INT,\der,\con)$ satisfies the conditions
$(\ref{(1)})$ through $(\ref{(6)})$ in Theorem~\ref{thm1} with $e=\unit$ the unit element in $A$.
\end{ex}

\begin{ex}
Let $A(\Db)$ be the disk algebra, that is, the commutative Banach algebra
of all analytic functions on $\Di$, which are extended to continuous functions on the closed unit disk $\Db$ with the supremum norm. Set
\[
B=\{a \in H(\Di):a'\in A(\Db)\ \mbox{and}\ a'(0)=0\}.
\]
For any $c\in\Di$, $B$ is a Banach space with the norm
$\VB{a}=|a(c)|+\Vinf{a'}$ for $a\in B$.
By the same reasoning as in Example~\ref{ex3},
we see that the Banach space $(B,\VB{\cdot})$
satisfies the conditions in Theorem~\ref{thm1}.
\end{ex}


\noindent\textbf{Acknowledgements:} We would like to thank the anonymous referee for pointing out a gap in the arguments in a former version of this paper.



\begin{thebibliography}{99}
\bibitem{ban2}
T.~Banakh,
{\it Every 2-dimensional Banach space has the Mazur--Ulam property},
Linear Algebra Appl. \textbf{632} (2022), 268--280.


\bibitem{bro}
A.~Browder, 
Introduction to function algebras, W.A. Benjamin,
Inc., New York-Amsterdam, 1969. 



\bibitem{cab2}
D.~Cabezas, M.~Cueto-Avellaneda, D.~Hirota, T.~Miura and A.M.~Peralta,
{\it Every commutative JB*-triple satisfies the complex Mazur--Ulam property},
Ann. Funct. Anal. (2022), 13:60 8pp.


\bibitem{che}
L.~Cheng and Y.~Dong,
{\it On a generalized Mazur--Ulam question: extension of isometries
between unit spheres of Banach spaces},
J. Math. Anal. Appl. \textbf{377} (2011), 464--470.

\bibitem{cue}
M.~Cueto-Avellaneda, D.~Hirota, T.~Miura and A.M.~Peralta,
{\it Exploring new solutions to Tingley's problem for function algebras} (2022), Quaestiones
Mathematicae, DOI: 10.2989/16073606.2022.2072787

\bibitem{cue2}
M.~Cueto-Avellaneda and A.M.~Peralta,
{\it On the Mazur--Ulam property for the space of Hilbert-space-valued continuous
functions},
J. Math. Anal. Appl. \textbf{479} (2019), 875--902.



\bibitem{Ding2004ell11} G.G. Ding, The representation theorem of onto isometric mappings between two unit spheres of $l^1 (\Gamma)$ type spaces and the application to the isometric extension problem, {Acta.\ Math.\ Sin.\ (Engl.\ Ser.)} \textbf{20} (2004), 1089-1094.




\bibitem{fer}
F.J.~Fern\'{a}ndez-Polo, J.J.~Garc\'{e}s, A.M. Peralta and I.~Villanueva,
{\it Tingley's problem for spaces of trace class operators},
Linear Algebra Appl. \textbf{529} (2017), 294--323.

\bibitem{fer2}
F.J.~Fern\'{a}ndez-Polo, E.~Jord\'{a} and A.M.~Peralta,
{\it Tingley's problem for p-Schatten von Neumann classes},
J. Spectr. Theory \textbf{10} (2020), 809--841.

\bibitem{fer3}
F.J.~Fern\'{a}ndez-Polo and A.M.~Peralta,
{\it On the extension of isometries between the unit spheres of a $C^*$-algebra
and $B(H)$},
Trans. Amer. Math. Soc. Ser. B \textbf{5} (2018), 63--80.

\bibitem{fer4}
F.J.~Fern\'{a}ndez-Polo and A.M.~Peralta,
{\it On the extension of isometries between the unit spheres of von Neumann algebras},
J. Math. Anal. Appl. \textbf{466} (2018), 127--143.

\bibitem{fer5}
F.J.~Fern\'{a}ndez-Polo and A.M.~Peralta,
{\it Low rank compact operators and Tingley's problem},
Adv. Math. \textbf{338} (2018), 1--40.

\bibitem{fer6}
F.J.~Fern\'{a}ndez-Polo and A.M.~Peralta,
{\it Tingley's problem through the facial structure of an atomic $JBW^*$-triple},
J. Math. Anal. Appl. \textbf{455} (2017), 750--760.

\bibitem{fle1}
R.~Fleming and J.~Jamison, 
Isometries on Banach spaces: function spaces,
Chapman \& Hall/CRC Monogr. Surv. Pure Appl. Math. 129, 
Boca Raton, 2003.




\bibitem{hat}
O.~Hatori,
{\it The Mazur--Ulam property for uniform algebras},
Studia Math. {\bf 265} (2022), no.2, 227--239.

\bibitem{hat2}
O.~Hatori, S.~Oi and R.~Shindo Togashi,
{\it Tingley's problems on uniform algebras},
J. Math. Anal. Appl. {\bf 503} (2021), 125346.

\bibitem{hat3}
O.~Hatori,
{\it The Mazur--Ulam property for a Banach space which satisfies
a separation condition},
arXiv:2205.01888.




\bibitem{hir}
D.~Hirota and T.~Miura,
{\it Tingley's problem for a Banach space of Lipschitz functions
on the closed unit interval},
preprint.




\bibitem{leu2}
C.W.~Leung, C.K.~Ng and N.C.~Wong,
{\it On a variant of Tingley's problem for some function spaces},
J. Math. Anal. Appl. \textbf{496} (2021), 124800.

\bibitem{Li2016} L. Li,
{\it Isometries on the unit sphere of the $\ell_1$-sum
of strictly convex normed spaces},
Ann. Funct. Anal. \textbf{7} (2016), No. 1, 33--41.

\bibitem{maz}
S.~Mazur and S.~Ulam, 
{\it Sur les transformationes isom\'{e}triques d'espaces 
vectoriels norm\'{e}s}, 
C. R. Acad. Sci. Paris {\bf 194} (1932), 946--948. 





\bibitem{mor}
M.~Mori,
{\it Tingley's problem through the facial structure of operator algebras},
J. Math. Anal. Appl. {\bf 466} (2018), 1281--1298.

\bibitem{mor2}
M.~Mori and N.~Ozawa,
{\it Mankiewicz's theorem and the Mazur--Ulam property for $C^*$-algebras},
Studia Math. \textbf{250} (2020), 265--281.

%
%
%


\bibitem{per2}
A.M.~Peralta,
{\it Extending surjective isometries defined on the unit sphere of
$\ell_\infty(\Gamma)$},
Rev. Mat. Complut. \textbf{32} (2019), 99--114.
%

\bibitem{per4}
A.M.~Peralta,
{\it On the unit sphere of positive operators},
Banach J. Math. Anal. \textbf{13} (2019), 91--112.

\bibitem{per5}
A.M.~Peralta and R.~Tanaka,
{\it A solution to Tingley's problem for isometries between the unit spheres of
compact $C^*$-algebras and $JB^*$-triples},
Sci. China Math. \textbf{62} (2019), 553--568.


\bibitem{tan}
D.~Tan, X.~Huang and R.~Liu,
{\it Generalized-lush spaces and the Mazur--Ulam property},
Studia. Math. \textbf{219} (2013), 139--153.

\bibitem{tan2}
D.~Tan and R.~Liu,
{\it A note on the Mazur--Ulam property of almost-CL-spaces},
J. Math. Anal. Appl. \textbf{405} (2013), 336--341.

\bibitem{tana}R.~Tanaka,
{\it A further property of spherical isometries},
Bull. Aust. Math. Soc. \textbf{90} (2014), 304--310.

\bibitem{tana2}
R.~Tanaka,
{\it The solution of Tingley's problem for the operator norm unit sphere of complex
$n\times n$ matrices},
Linear Algebra Appl. \textbf{494} (2016), 274--285.

\bibitem{tana3}
R.~Tanaka,
{\it Spherical isometries of finite dimensional $C^*$-algebras},
J. Math. Anal. Appl. \textbf{445} (2017), 337--341.

\bibitem{tana4}
R.~Tanaka,
{\it Tingley's problem on finite von Neumann algebras},
J. Math. Anal. Appl. \textbf{451} (2017), 319--326.

\bibitem{tin}
D.~Tingley,
{\it Isometries of the unit sphere},
Geom. Dedic. \textbf{22} (1987), 371--378.

\bibitem{wan1}
R.~Wang,
{\it Isometries of $C_0^{(n)}(X)$},
Hokkaido Math. J. \textbf{25} (1996), 465--519.

\bibitem{WangNiu2022}
R.~Wang, Y.~Niu,
{\it The Mazur-Ulam property for abelian C$^*$-algebras},
Studia. Math. \textbf{266} (2022), No. 2, 193--207.

\bibitem{wan2}
R.~Wang and A.~Orihara,
{\it Isometries on the $\ell^1$-sum of $C_0(\Omega,E)$ type spaces},
J. Math. Sci. Univ. Tokyo \textbf{2} (1995), 131--154.

\bibitem{WanOrihara2002}
R.~Wang and A.~Orihara,
{\it Isometries between the unit spheres of $\ell_1$-sum
of strictly convex normed spaces},
Acta Sci. Natur. Univer. Nankai \textbf{35} (2002), 38--42.


\bibitem{wea}
N.~Weaver, 
Lipschitz algebras. World Scientific Publishing Co.,
Inc., River Edge, NJ, 1999.
\end{thebibliography}
\end{document}